\numberwithin{equation}{section}
\theoremstyle{plain}
\newtheorem{thm}{Theorem}[section]
\newtheorem{cor}[thm]{Corollary}
\newtheorem{lem}[thm]{Lemma}
\newtheorem{prop}[thm]{Proposition}
\newtheorem{defn}[thm]{Definition}
\newtheorem{exm}[thm]{Example}
\newtheorem{rem}[thm]{Remark}
\newcommand{\Fac}{\operatorname{Fac}\nolimits}
\newcommand{\Hom}{\operatorname{Hom}\nolimits}
\renewcommand{\Im}{\operatorname{Im}\nolimits}
\newcommand{\Ext}{\operatorname{Ext}\nolimits}
\newcommand{\id}{\operatorname{id}\nolimits}
\renewcommand{\mod}{\mathsf{mod}\hspace{.01in}}
\newcommand{\Cone}{\operatorname{Cone}\nolimits}
\newcommand{\CoCone}{\operatorname{CoCone}\nolimits}
\newcommand{\M}{\mathcal M}
\newcommand{\B}{\mathcal B}
\newcommand{\uB}{\underline{\B}}
\newcommand{\oB}{\overline{\B}}
\newcommand{\U}{\mathcal U}
\newcommand{\V}{\mathcal V}
\newcommand{\W}{\mathcal W}
\newcommand{\G}{\mathcal G}
\newcommand{\T}{\mathcal T}
\newcommand{\D}{\mathcal D}
\newcommand{\N}{\mathcal N}
\newcommand{\X}{\mathscr X}
\newcommand{\Y}{\mathscr Y}
\newcommand{\C}{\mathcal C}
\newcommand{\EE}{\mathbb E}
\newcommand{\svecv}[2]{\left(\begin{smallmatrix}
      #1 \\
      #2
    \end{smallmatrix}\right)}
\newcommand{\svech}[2]{\left(\begin{smallmatrix}
      #1 & #2
\end{smallmatrix}\right)}
\renewcommand{\emph}{\textit}
\renewcommand{\phi}{\varphi}
\newcommand{\add}{\mathsf{add}\hspace{.01in}}
\begin{document}

\title{Hereditary cotorsion pairs on extriangulated subcategories}\footnote{Yu Liu was supported by the Fundamental Research Funds for the Central Universities (Grant No. 2682018ZT25) and the National Natural Science Foundation of China (Grant No. 11901479). Panyue Zhou was supported by the National Natural Science Foundation of China (Grant Nos. 11901190 and 11671221), and by the Hunan Provincial Natural Science Foundation of China (Grant No. 2018JJ3205), and by the Scientific Research Fund of Hunan Provincial Education Department (Grant No. 19B239).}
\author{Yu Liu and Panyue Zhou}
\address{School of Mathematics, Southwest Jiaotong University, 610031 Chengdu, Sichuan, People's Republic of China}
\email{liuyu86@swjtu.edu.cn}
\address{College of Mathematics, Hunan Institute of Science and Technology, 414006 Yueyang, Hunan, People's Republic of China}
\email{panyuezhou@163.com}
\begin{abstract}
Let $\B$ be an extriangulated category with enough projectives and enough injectives. We define a proper $m$-term subcategory $\G$ on $\B$, which is an extriangulated subcategory. Then we give a correspondence between cotorsion pairs on $\G$, support $\tau$-tilting subcategories on an abelian quotient of $\G$ when $m=2$. If such $\G$ is induced by a hereditary cotorsion pair, then we give a correspondence between cotorsion pairs on $\G$ and intermediate cotorsion pairs on $\B$ under certain assumptions. At last, we study an important property of such extriangulated subcategory $\G$.
\end{abstract}
\keywords{extriangulated categories; cotorsion pairs; support $\tau$-tilting subcategories.}
\subjclass[2010]{18E30; 18E10.}
\maketitle

\section{Introduction}
Cotorsion pairs were first introduced  by Salce \cite{Sa} in the category of abelian groups, and then in abelian, exact and triangulated categories. Cotorsion pairs give a perspective of resolutions in these categories, and are related with many important homological notions in category theory and representation theory.
If one considers a cotorsion pair in a triangulated category, it becomes essentially
the analogue of a torsion pair. It gives a generalization of $t$-structure, co-$t$-structure and cluster tilting subcategory.

Let $\mathcal T$ be a triangulated category with a shift functor $\Sigma$.
There are many articles discussing the relation between co-$t$-structures and the silting theory. For example, it was shown in \cite{MSSS} that there exists a bijection between the bounded co-$t$-structures in $\T$ and the silting subcategories of $\T$.
If $(\X,\Y)$ and $(\X', \Y')$ are co-$t$-structures of $\T$, then $(\X', \Y')$
is called intermediate if $\X\subseteq \X'\subseteq \Sigma \X$. Iyama, J{\o}rgensen and Yang \cite{IJY} proved that intermediate
co-$t$-structures are in bijection with two-term silting subcategories, and also with
support $\tau$-tilting subcategories under certain assumptions.

Assume that $(\X,\Y)$ is a co-$t$-structure with coheart $\mathcal S=\Sigma\X\cap\Y$ and extended coheart $\mathcal C=\Sigma^2\X\cap\Y=\mathcal S\ast\Sigma\mathcal S$, then $\C$ is an extriangulated subcategory of $\T$.  Pauksztello and Zvonareva \cite{PZ} showed that there exists a bijection between intermediate co-$t$-structures in $\mathcal T$ and cotorsion pairs in $\C$. They also showed further that there exists a bijection between complete cotorsion pairs in $\C$ and
functorially finite torsion pairs in $\mod\mathcal S$ when it is Notherian.

The extriangulated category, introduced by Nakaoka and Palu \cite{NP}, is a simultaneous
generalization of exact category and triangulated category.
There are many examples of extriangulated categories which are neither exact triangulated categories nor triangulated categories, see \cite{HZZ,NP,ZZ}.
Nakaoka and Palu \cite{NP} defined the notion of a cotorsion pair on an extriangulated category, which is a generalization of cotorsion pairs on exact categories and on triangulated categories.

We want to investigate if there are similar bijections on exact categories, we also want to generalize the bijections given in \cite{PZ}. Hence we are considering the bijections under a more general setting.
Let $k$ be a field and $(\B,\EE,\mathfrak{s})$ be a Krull-Schmidt Hom-finite, $k$-linear extriangulated category.
We denote by $\mathcal P$ (resp. $\mathcal I$) the subcategory of projective (resp. injective) objects. When we say that $\C$ is a subcategory of $\B$, we always assume that $\C$ is full and closed under isomorphisms.

\begin{defn}\label{def2}
Let $\B'$ and $\B''$ be two subcategories in $\B$. Denote by $\Cone(\B',\B'')$ the subcategory
$$\{X\in \B \text{ }|\text{ } \textrm{there exists an}~ \text{ } \EE\text{-triangle } \xymatrix@C=0.5cm@R0.5cm{B' \ar[r] &B'' \ar[r] &X \ar@{-->}[r] &} \text{, }B'\in \B' \text{ and }B''\in \B''  \}.$$
Let $\Sigma \B'=\Cone(\B',\mathcal I)$ and $\Sigma^i \B'=\Cone(\Sigma^{i-1}\B', \mathcal I)$. 
\end{defn}

\begin{defn}
A subcategory $\W$ is called $n$-rigid if $\EE^{i}(\W,\W)=0$, $i=1,2,...,n$. A subcategory $\G$ is called a proper $m$-term subcategory induced by $\W$ if $\W$ is $n$-rigid, $m\leq n$ and
$$\G=\{X\text{ }|\text{ } \textrm{there exist}~ \text{ } \EE\text{-triangles} \xymatrix@C=0.5cm@R0.5cm{ X_{i+1} \ar[r] &W_i \ar[r] &X_i \ar@{-->}[r] &,}~\mbox{where}~i=1,...,m, X_{m+1},W_i\in \W,X_1=X  \}.$$
\end{defn}


Our first main result shows that cotorsion pairs on a proper $2$-term subcategory $\G$ induced by $\W$ correspond bijectively with support $\tau$-tilting subcategories on the ideal quotient $\G/\Sigma \W$ when it is abelian, giving a link between cotorsion pairs and $\tau$-tilting theory. 

\begin{thm}{\rm (See Theorem \ref{main2} for details)}
Let $\W$ be a $2$-rigid subcategory such that $\mathcal I\subsetneq \W$ and $\mathcal G=\Cone(\W,\W)$ be the proper $2$-term subcategory induced by $\W$. Assume that $\G/\Sigma \W$ is abelian. Then we have a one-to-one correspondence $$\Phi\colon (\X,\Y) \mapsto (\X/\Sigma \W)\cap(\Y/\Sigma \W)$$ from the first of the following sets to the second.
\begin{itemize}
\item[\rm (i)] Cotorsion pairs $(\X,\Y)$ in $\mathcal G$.
\smallskip

\item[\rm (ii)] Support $\tau$-tilting subcategories in $\G/\Sigma \W$.
\end{itemize}
\end{thm}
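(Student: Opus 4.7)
The plan is to factor the correspondence through the more primitive bijection between cotorsion pairs in $\G$ and functorially finite torsion pairs in the abelian quotient $\G/\Sigma\W$, and then compose with an Adachi--Iyama--Reiten-type bijection between such torsion pairs and support $\tau$-tilting subcategories. This mirrors the strategy used in \cite{PZ} for intermediate co-$t$-structures on triangulated categories, now transferred to the extriangulated setting where $\G$ plays the role analogous to the extended coheart.

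For the forward direction, start with a cotorsion pair $(\X,\Y)$ in $\G$. First I would verify the inclusions $\W\subseteq \X$ and $\Sigma\W\subseteq \Y$, using that $\W$ behaves as the subcategory of projectives and $\Sigma\W$ as the subcategory of injectives in $\G$; this follows from the $2$-rigidity of $\W$ together with the defining $\EE$-triangles of objects of $\G=\Cone(\W,\W)$. Writing $\pi\colon \G\to \G/\Sigma\W$ for the projection, set $\overline\X=\pi(\X)$ and $\overline\Y=\pi(\Y)$. The Ext-vanishing $\EE^1(\X,\Y)=0$, together with $\Sigma\W\subseteq \Y$, should imply $\Hom_{\G/\Sigma\W}(\overline\X,\overline\Y)=0$: any morphism $X\to Y$ not factoring through some object of $\Sigma\W$ would produce a nonsplit $\EE$-extension by a standard rotation of the defining triangles. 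The approximation $\EE$-triangles of the cotorsion pair then descend to torsion/torsion-free short exact sequences of a functorially finite torsion pair $(\overline\X,\overline\Y)$ in the abelian quotient, and its associated support $\tau$-tilting subcategory is precisely $\overline\X\cap\overline\Y=\Phi(\X,\Y)$.

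For the inverse, given a support $\tau$-tilting subcategory $\M\subseteq \G/\Sigma\W$, I would form its associated torsion pair $(\Fac\M,\M^\perp)$ in $\G/\Sigma\W$ and lift by taking preimages at the level of objects: $\X=\pi^{-1}(\Fac\M)$ and $\Y=\pi^{-1}(\M^\perp)$. Verifying that $(\X,\Y)$ is a cotorsion pair in $\G$ amounts to two tasks: (a) translating $\Hom$-vanishing in $\G/\Sigma\W$ back to $\EE^1$-vanishing in $\G$, using that morphisms through $\Sigma\W$ produce no obstruction because $\W$ is $2$-rigid; and (b) constructing the approximation $\EE$-triangles of the cotorsion pair in $\G$ for each object $B\in\G$, by splicing the torsion/torsion-free exact sequences for $\pi(B)$ in $\G/\Sigma\W$ with the two defining $\EE$-triangles $X_3\to W_2\to X_2$ and $X_2\to W_1\to B$ that express $B$ as a two-term $\W$-object. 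Mutual inverseness of the two constructions is then automatic, since both are determined by $\pi$ and $\pi^{-1}$ on subcategories of $\G$ containing $\Sigma\W$, and the Adachi--Iyama--Reiten bijection is itself an equivalence.

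The principal obstacle will be task (b): converting $\Hom$-vanishing and functorial finiteness in the abelian quotient back into $\EE^1$-vanishing and honest approximation $\EE$-triangles in the extriangulated category $\G$. This requires a careful diagram chase exploiting the $2$-rigidity of $\W$ and the two-term $\W$-resolutions of objects of $\G$ in order to control the \emph{hidden} $\EE$-extensions sitting in the ideal of morphisms factoring through $\Sigma\W$; without $2$-rigidity these extensions need not vanish and the lift would fail. Once this dictionary between $\G$ and $\G/\Sigma\W$ is in place, the remaining checks follow from standard manipulations of cotorsion pairs in extriangulated categories and from the known properties of support $\tau$-tilting subcategories.
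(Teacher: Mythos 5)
Your overall strategy (route through torsion classes in $\overline\G:=\G/\Sigma\W$) is close in spirit to the paper's, but there is a concrete mix-up that propagates through the whole argument and a key technical step is left unresolved.

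First, the identification of which half of the cotorsion pair descends to the torsion class is reversed. In the paper's Lemma~\ref{p4} one shows $\overline\Y=\Fac\overline\Y=\Fac(\overline\X\cap\overline\Y)$, so it is $\overline\Y$, not $\overline\X$, that is the torsion class associated to the support $\tau$-tilting subcategory $\overline\M=\overline\X\cap\overline\Y$, and the torsion-free class is $\overline\Y^{\perp}$, which is not $\overline\X$. Your claimed vanishing $\Hom_{\overline\G}(\overline\X,\overline\Y)=0$ is in fact false: $\W\subseteq\X$ (since $\W$ is projective in $\G$, cf.\ Lemma~\ref{p1} and Lemma~\ref{y1}(d)) and $\overline\W$ is the full subcategory of projectives of $\overline\G$, so $\Hom_{\overline\G}(\overline\W,\overline Y)\neq 0$ for $Y\in\Y$ nonzero in $\overline\G$. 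Consequently the ``torsion pair $(\overline\X,\overline\Y)$'' you propose to lift does not exist, and the inverse assignment $\X=\pi^{-1}(\Fac\M)$, $\Y=\pi^{-1}(\M^{\perp})$ has the roles of $\X$ and $\Y$ swapped; even after swapping, taking naive preimages is not what the paper does.

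Second, you correctly flag task (b) (reconstructing the approximation $\EE$-triangles in $\G$) as the principal obstacle, but you do not supply the mechanism that makes it work. The paper's actual construction for surjectivity sets $\overline\N=\Fac\overline\M$, defines $\Y=\add(\N\cup\Sigma\W)$ and $\X={}^{\bot_1}\Y\cap\G$, proves $\Y$ is extension-closed (via Corollary~\ref{p2} and Lemma~\ref{p6}), and then for each $Z\in\G$ builds the right approximation triangle by first taking the triangle $W_0\to Z\oplus I_1\to\Sigma W_1\dashrightarrow$, using the left $\overline\M$-approximation of $W_0$, passing to a \emph{minimal} left $\Y$-approximation, and invoking the extriangulated Wakamatsu Lemma (Lemma~\ref{Wa}) to place the cocone in ${}^{\bot_1}\Y$; the conclusion then follows from Corollary~\ref{c1}. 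None of this ``$\pi^{-1}$ plus diagram chase'' sketch substitutes for that argument: the minimality step and Wakamatsu's Lemma are essential and are exactly where the (WIC) condition and $2$-rigidity of $\W$ enter. Similarly, for injectivity the paper does not use an Adachi--Iyama--Reiten-type bijection as a black box; it proves directly that $\overline\X\cap\overline\Y=\mathbf{P}(\overline\Y)$ (Corollary~\ref{impor}), which determines $(\X,\Y)$ via $\Y=\pi^{-1}(\Fac\overline\M)$ together with $\X={}^{\bot_1}\Y\cap\G$. So the proposal's decomposition of the bijection is compatible with the paper's in broad outline, but it is currently incorrect as stated and missing the main technical ingredient.
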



Since we don't have the concept of co-$t$-structure on the extriangulated category, we introduce the notion of hereditary cotorsion pair (which become a co-$t$-structure when $\B$ is tirangulated), see Definition \ref{hered}. Our second main result gives a bijection between cotorsion pairs on proper $(i+1)$-term subcategory $\W_i$ and intermediate cotorsion pairs on $\B$.

\begin{thm}{\rm (See Theorem \ref{main1} for details)}
Let $(\U,\V)$ be a hereditary cotorsion pair in $\B$ and $\mathcal W=\mathcal U\cap\mathcal V$. Denote $\Cone(\W,\W)$ by $\W_1$ and  $\Cone(\W_i,\W)$ by $\W_{i+1}$, where $\W_0:=\W$. Assume that
$$({^{\bot_1}}(\Sigma \V),\Sigma \V) \text{ and } ({^{\bot_1}}(\Sigma^i \V),\Sigma^i \V)$$
are also cotorsion pairs in $\B$. 
Then there exists a one-to-one correspondence from the first of the following sets to the second.
\begin{itemize}
\item[(a)] Cotorsion pairs $(\U',\V')$ in $\B$ such that $\U\subseteq \U'\subseteq {^{\bot_1}}(\Sigma^i \V)$.
\medskip

\item[(b)] Cotorsion pairs $(\X,\Y)$ in $\W_i$.
\end{itemize}
\end{thm}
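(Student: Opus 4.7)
The plan is to adapt the Pauksztello--Zvonareva correspondence to the extriangulated setting by defining explicit mutually inverse maps between the two sets. Given a cotorsion pair $(\U',\V')$ on $\B$ in the intermediate range, I would set
$$\Phi(\U',\V') \;:=\; \bigl(\U'\cap\W_i,\,\V'\cap\W_i\bigr),$$
and conversely, for a cotorsion pair $(\X,\Y)$ in $\W_i$, I would define
$$\Psi(\X,\Y) \;:=\; \bigl(\U\ast\X,\;\Y\ast\Sigma^i\V\bigr),$$
where $\U\ast\X$ denotes the class of $M\in\B$ admitting an $\EE$-triangle $U\to M\to X\dashrightarrow$ with $U\in\U$, $X\in\X$, and analogously for $\Y\ast\Sigma^i\V$. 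A preliminary lemma, proved by induction on $i$ using the hereditary hypothesis and the fact that each $\Sigma^j\V$ is built from $\V$ by iterated $\EE$-triangles against injectives, records the fundamental Ext-vanishings $\W_i\subseteq {^{\bot_1}}(\Sigma^i\V)$ and $\EE^1(\U,\W_i)=0$, together with $\W_i\subseteq\V$ and hence $\U\cap\W_i=\W$.

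For well-definedness of $\Phi$, the vanishing $\EE^1(\U'\cap\W_i,\V'\cap\W_i)=0$ is immediate, and one must show that for $W\in\W_i$ the $(\U',\V')$-approximation triangles in $\B$ have all three terms in $\W_i$; this would be carried out by induction on $i$, writing $W$ via $\W_i=\Cone(\W_{i-1},\W)$, approximating the endpoints, and gluing with the extriangulated octahedron (ET4). For well-definedness of $\Psi$, the vanishing $\EE^1(\U\ast\X,\Y\ast\Sigma^i\V)=0$ reduces, upon applying $\EE(-,N)$ to the defining triangles, to the four vanishings $\EE^1(\U,\Y)$, $\EE^1(\U,\Sigma^i\V)$, $\EE^1(\X,\Y)$, $\EE^1(\X,\Sigma^i\V)$, all of which follow from the preliminary lemma or from the hereditary structure of $(\U,\V)$. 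The main obstacle lies in producing the approximation $\EE$-triangles of an arbitrary $B\in\B$ for $\Psi(\X,\Y)$: I would first apply the auxiliary cotorsion pair $({^{\bot_1}}(\Sigma^i\V),\Sigma^i\V)$ to get $V_*\to U_*\to B\dashrightarrow$ with $V_*\in\Sigma^i\V\subseteq\V'$, then decompose $U_*\in {^{\bot_1}}(\Sigma^i\V)$ as an $\EE$-triangle $U_0\to U_*\to X_0\dashrightarrow$ with $U_0\in\U$ and $X_0\in\W_i$ (the core technical step, proved by another induction on $i$ combining $(\U,\V)$ with the iterative definition of $\W_i$), next refine $X_0$ using the cotorsion pair $(\X,\Y)$ in $\W_i$, and finally glue via (ET4) to land the middle in $\U\ast\X$ and the leftmost term in $\Y\ast\Sigma^i\V$; the dual approximation is obtained symmetrically by starting from $(\U,\V)$.

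Mutual inversity splits into two checks. The equality $\Phi\circ\Psi=\id$ reduces to $(\U\ast\X)\cap\W_i=\X$ and $(\Y\ast\Sigma^i\V)\cap\W_i=\Y$; the $\supseteq$-inclusions are immediate via trivial triangles, while the $\subseteq$-inclusions follow from the preliminary Ext-vanishings together with $\U\cap\W_i=\W$, which force any defining triangle $U\to M\to X\dashrightarrow$ with $M\in\W_i$ to exhibit $M$ as an element of $\X$ up to summands. The equality $\Psi\circ\Phi=\id$ amounts to $\U\ast(\U'\cap\W_i)=\U'$ and $(\V'\cap\W_i)\ast\Sigma^i\V=\V'$: the $\subseteq$-inclusions use closure of $\U'$ and $\V'$ under extensions (being parts of a cotorsion pair), and the $\supseteq$-inclusions are precisely the decomposition argument from the main obstacle, now applied to an arbitrary object already known to lie in the intermediate range.
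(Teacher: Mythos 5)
Your $\Phi$ and $\Psi$ are, up to rewriting, the same maps the paper uses: by Lemma~\ref{imp} one has $\W_i=\V\cap{^{\bot_1}}(\Sigma^i\V)$, and since $\U\subseteq\U'\subseteq{^{\bot_1}}(\Sigma^i\V)$ forces $\Sigma^i\V\subseteq\V'\subseteq\V$, your $\U'\cap\W_i$ and $\V'\cap\W_i$ coincide with the paper's $\V\cap\U'$ and $\V'\cap{^{\bot_1}}(\Sigma^i\V)$; your $\Psi$ is the paper's $\Psi$ minus the $\add(-)$ closure, which you do need since $\U\ast\X$ and $\Y\ast\Sigma^i\V$ are not a priori closed under summands. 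The Ext-vanishing lemma, the well-definedness of $\Phi$ via the auxiliary cotorsion pair, and the inverse checks via Corollary~\ref{cor1} all run along the paper's lines, so the skeleton of your plan is sound.

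The real gap is in the well-definedness of $\Psi$. Your ``core technical step'' asserts that $U_*\in{^{\bot_1}}(\Sigma^i\V)$ admits an $\EE$-triangle $U_0\to U_*\to X_0\dashrightarrow$ with $U_0\in\U$ and $X_0\in\W_i$; what the cotorsion pair $(\U,\V)$ actually gives you (after checking $\EE(V_1,\Sigma^i\V)\simeq\EE(U_*,\Sigma^{i+1}\V)=0$ via dimension shift) is a triangle of the opposite shape, $V_1\to U_1\to U_*\dashrightarrow$ with $V_1\in\W_i$, $U_1\in\U$, and with $U_*$ as the \emph{cone}, not the middle term. The form you state is not obviously available and is never used in the paper, which instead starts from $B\to V\to U\dashrightarrow$ with $(\U,\V)$, refines $V\in\V$ by $\Sigma^iV_0\to R\to V\dashrightarrow$ to land $R\in\V\cap{^{\bot_1}}(\Sigma^i\V)=\W_i$, then applies $(\X,\Y)$ to $R$ and glues; refining the $\V$-side is what makes the intersection formula $\W_i=\V\cap{^{\bot_1}}(\Sigma^i\V)$ do the work. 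More seriously, the claim that ``the dual approximation is obtained symmetrically'' hides the hardest part of the proof. Producing $V'_0\to U'_0\to B\dashrightarrow$ is not dual to producing $B\to V'\to U'\dashrightarrow$: in the paper's argument the second one requires using \emph{both} auxiliary cotorsion pairs $({^{\bot_1}}(\Sigma\V),\Sigma\V)$ and $({^{\bot_1}}(\Sigma^i\V),\Sigma^i\V)$, passing through $\Sigma V_2\in\W_1\subseteq\W_i$, taking a projective presentation $\Omega R_2\to P_2\to R_2\dashrightarrow$, and invoking heredity via $\EE(\Omega R_2,\V)\simeq\EE(R_2,\Sigma\V)=0$ to conclude $\Omega R_2\in\U$ and hence $U'_0\in\U'$. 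None of this has a mirror image in the easier direction, so waving it away by ``symmetry'' leaves the central difficulty unaddressed. If you fix the shape of your decomposition triangle and spell out the $\Omega$-argument for the second approximation, the rest of your outline should go through.
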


Our third main result shows an important property of $\W_i$ $(i>0)$ under the settings of the above theorem.

\begin{thm}{\rm (See Theorem \ref{main5} for details)}
Let $(\U,\V)$ be a hereditary cotorsion pair in $\B$ and $\mathcal W=\mathcal U\cap\mathcal V$. If $\mathcal I\subsetneq\W$, then
$(\W_i, \Sigma^i\W_{j-i-1})$ is a hereditary cotorsion pair in $\W_j$ when $0<i<j$.
\end{thm}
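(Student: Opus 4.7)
The plan has three parts: verify that both subcategories sit inside $\W_j$, establish the hereditary Ext-orthogonality, and construct the two approximation $\EE$-triangles demanded by the definition of a cotorsion pair for every $Y\in\W_j$.

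For the containments, since $0\in\I\subseteq\W$, the trivial $\EE$-triangle $0\to X\to X$ shows $\W\subseteq\W_1$, and an induction on $k$ using the defining $\EE$-triangle $X'\to W\to X$ of each $X\in\W_k$ (with $X'\in\W_{k-1}\subseteq\W_k$ by the induction hypothesis) yields $\W_k\subseteq\W_{k+1}$, so in particular $\W_i\subseteq\W_j$. Since $\I\subseteq\W$, one has $\Sigma\W_l=\Cone(\W_l,\I)\subseteq\Cone(\W_l,\W)=\W_{l+1}$, and iterating gives $\Sigma^i\W_{j-i-1}\subseteq\W_{j-1}\subseteq\W_j$.

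For the higher Ext-orthogonality, the hereditary assumption on $(\U,\V)$ gives $\EE^m(\U,\V)=0$ for every $m\geq 1$, and $\W\subseteq\U\cap\V$ then yields $\EE^m(\W,\W)=0$. Induction on $l$, via the long exact sequence for $\Hom(\W,-)$ applied to the defining $\EE$-triangles of objects of $\W_l$, gives $\EE^m(\W,\W_l)=0$ for $m\geq 1$; a second induction on $k$, via the long exact sequence for $\Hom(-,\W_l)$ applied to the defining $\EE$-triangles of objects of $\W_k$, upgrades this to $\EE^m(\W_k,\W_l)=0$ whenever $m\geq k+1$. Combined with the dimension-shift $\EE^m(A,\Sigma^iB)\cong\EE^{m+i}(A,B)$ for $m\geq 1$ — obtained from $\EE^m(-,I)=0$ with $I\in\I$ applied to the $\EE$-triangle $B\to I\to\Sigma B$ — the specialisation to $A\in\W_i$ and $B\in\W_{j-i-1}$ yields $\EE^m(\W_i,\Sigma^i\W_{j-i-1})=0$ for all $m\geq 1$.

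For the approximation triangles, fix $Y\in\W_j$ and write its defining chain $Y_{k+1}\to W_{k+1}\to Y_k$ ($k=0,\ldots,j-1$, with $Y_0=Y$ and $Y_j,W_{k+1}\in\W$), so that $Y_{i+1}\in\W_{j-i-1}$. The first $\EE$-triangle $\Sigma^iY_{i+1}\to U\to Y$ with $U\in\W_i$ is built by iterated octahedra on the compositions $W_{k+1}\to Y_k\to W_k$: the octahedral axiom applied to $W_{i+1}\to Y_i\to W_i$ yields $\EE$-triangles $W_{i+1}\to W_i\to D^{(1)}$ (so $D^{(1)}\in\Cone(\W,\W)=\W_1$) and $\Sigma Y_{i+1}\to D^{(1)}\to Y_{i-1}$; iterating through the remaining triangles produces at step $k$ objects $D^{(k)}$ with $\EE$-triangles $D^{(k-1)}\to W_{i-k+1}\to D^{(k)}$ and $\Sigma^kY_{i+1}\to D^{(k)}\to Y_{i-k}$, so that $D^{(k)}\in\Cone(\W_{k-1},\W)=\W_k$, and the step $k=i$ delivers $U:=D^{(i)}\in\W_i$. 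The second $\EE$-triangle $Y\to V'\to U'$ with $V'\in\Sigma^i\W_{j-i-1}$, $U'\in\W_i$ is produced by a dual iterated-octahedra procedure: combine the chain with injective envelopes $W_k\to I_k\to\Sigma W_k$, apply the octahedral axiom to $Y_{k+1}\to W_{k+1}\to I_{k+1}$ to obtain rotated $\EE$-triangles $Y_k\to\Sigma Y_{k+1}\to\Sigma W_{k+1}$ (with $\Sigma Y_{k+1}$ realised via these envelopes), and stitch them together by further octahedra to build $V'$ from successive shifts $\Sigma^kY_{i+1}$ and $U'$ from successive cones of the $\Sigma^kW_{k+1}$. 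The main obstacle is this second construction: in the extriangulated (as opposed to triangulated) setting the shift $\Sigma$ depends on the chosen injective envelope, so the interleaving octahedra must be arranged with care to guarantee that the intermediate objects remain in the correct subcategories $\Sigma^k\W_l$ and $\W_k$ at every step and to deliver the precise memberships $V'\in\Sigma^i\W_{j-i-1}$ and $U'\in\W_i$ at the end.
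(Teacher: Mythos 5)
Your containment and Ext-vanishing computations are sound; the dimension-shift to $\EE^m(\W_i,\Sigma^i\W_{j-i-1})=0$ for all $m\geq1$ parallels (and slightly streamlines) the paper's lemmas culminating in Corollary~\ref{higher}. The genuine gap is in the construction of the approximation $\EE$-triangles. In your first construction you apply ``the octahedral axiom to $W_{i+1}\to Y_i\to W_i$'', but $W_{i+1}\to Y_i$ is a \emph{deflation} and $Y_i\to W_i$ is an \emph{inflation}. In an extriangulated category (ET4) applies to two consecutive inflations and (ET4)${}^{\mathrm{op}}$ to two consecutive deflations; there is no octahedral axiom for a deflation followed by an inflation, and such a composite need not even be an inflation (in $\mathbb{Z}$-modules $\mathbb{Z}\twoheadrightarrow\mathbb{Z}/2\hookrightarrow\mathbb{Z}/4$ is neither), so the object $D^{(1)}$ you declare to be ``the cone of $W_{i+1}\to W_i$'' may simply not exist. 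The subsequent rotation producing $\Sigma Y_{i+1}\to D^{(1)}\to Y_{i-1}$ is likewise a triangulated-category operation unavailable here, and the same obstruction recurs at every inductive step. Your second construction you flag yourself as incomplete.

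The paper avoids all of this. It first establishes that $\W_i$ is contravariantly finite in $\W_j$ (Proposition~\ref{contra}) and that $\W_i$ has enough projectives $\W$ and enough injectives $\Sigma^i\W$ (Lemma~\ref{enough2}), each proved by explicit commutative-diagram arguments using (ET4)/(ET4)${}^{\mathrm{op}}$ only on legitimate composites together with Nakaoka--Palu's pull-back/push-out. From these a double induction produces, for each $A\in\W_j$, a \emph{single} approximation $\EE$-triangle $V\to U\to A\dashrightarrow$ with $U\in\W_i$ and $V\in\Sigma^i\W_{j-i-1}$; the second approximation triangle then comes for free from the dual of Lemma~\ref{2}, whose remaining hypotheses are discharged via Lemma~\ref{summand2} and Corollary~\ref{higher}. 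To repair your argument you would need to replace each ``octahedron'' by a pull-back/push-out diagram that is actually licensed in the extriangulated setting, which is in effect what the paper's double induction does.
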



\section{Preliminaries}

In this article, let $k$ be a field and $(\B,\EE,\mathfrak{s})$ be a Krull-Schmidt Hom-finite, $k$-linear extriangulated category (see \cite{NP} for details of extriangulated categories).
 Let $\mathcal P$ (resp. $\mathcal I$) be the subcategory of projective (resp. injective) objects. When we say that $\C$ is a subcategory of $\B$, we always assume that $\C$ is full and closed under isomorphisms.

%

\begin{defn}\cite[Definition 2.1]{NP}
Let $\U$ and $\V$ be two subcategories of $\B$ which are closed under direct summands. We call $(\U,\V)$ a \emph{cotorsion pair} if it satisfies the following conditions:
\begin{itemize}
\item[(a)] $\EE(\U,\V)=0$.
\smallskip

\item[(b)] For any object $B\in \B$, there exist two $\EE$-triangles
\begin{align*}
V_B\rightarrow U_B\rightarrow B{\dashrightarrow},\quad
B\rightarrow V^B\rightarrow U^B{\dashrightarrow}
\end{align*}
satisfying $U_B,U^B\in \U$ and $V_B,V^B\in \V$.
\end{itemize}
\end{defn}

By the definition of a cotorsion pair, we can immediately conclude the following result.

\begin{lem}\label{y1}
Let $(\U,\V)$ be a cotorsion pair in $\B$.
\begin{itemize}
\item[(a)] $\V=\U^{\bot_1}:=\{ X\in \B \text{ }|\text{ } \EE(\U,X)=0\}$.
\item[(b)] $\U={^{\bot_1}}\V:=\{ Y\in \B \text{ }|\text{ } \EE(Y,\V)=0\}$.
\item[(c)] $\U$ and $\V$ are closed under extensions.
\item[(d)] $\mathcal I\subseteq \V$ and $\mathcal P\subseteq \U$.

\end{itemize}
\end{lem}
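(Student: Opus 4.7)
The plan is to prove (a) and (b) first, since (c) and (d) will then follow easily. For (a), the inclusion $\V\subseteq\U^{\bot_1}$ is literally condition (a) in the definition of a cotorsion pair. For the reverse inclusion, I pick $X\in\U^{\bot_1}$ and invoke condition (b) to obtain an $\EE$-triangle $X\to V^X\to U^X\dashrightarrow$ with $V^X\in\V$ and $U^X\in\U$. The classifying class of this triangle lies in $\EE(U^X,X)$, which vanishes since $U^X\in\U$ and $X\in\U^{\bot_1}$. Therefore the $\EE$-triangle splits, giving $V^X\cong X\oplus U^X$; closure of $\V$ under direct summands then forces $X\in\V$. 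Part (b) is entirely symmetric, applied to the first $\EE$-triangle from condition (b) and using closure of $\U$ under direct summands.

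For (c), given an $\EE$-triangle $U_1\to X\to U_2\dashrightarrow$ with $U_1,U_2\in\U$, I apply the functor $\EE(-,V)$ for an arbitrary $V\in\V$ to produce the exact sequence $\EE(U_2,V)\to\EE(X,V)\to\EE(U_1,V)$. Both outer terms vanish by (a), so $\EE(X,V)=0$ for every $V\in\V$, and (b) then yields $X\in{}^{\bot_1}\V=\U$. Closure of $\V$ under extensions is dual, using the long exact sequence in the second variable together with (a). Part (d) is immediate from the definition of projectivity and injectivity in the extriangulated setting: any injective $I$ satisfies $\EE(-,I)=0$ on all of $\B$, so in particular $\EE(\U,I)=0$ and hence $I\in\U^{\bot_1}=\V$ by (a); dually $\mathcal P\subseteq{}^{\bot_1}\V=\U$ by (b).

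The only subtle point is the identification of a trivial class in $\EE$ with a split $\EE$-triangle, together with the long exact sequences obtained by applying $\EE(-,-)$ to an $\EE$-triangle. Both are standard properties of extriangulated categories established in \cite{NP}, so no single step is a genuine obstacle — the lemma is really a direct translation of the familiar cotorsion-pair argument from exact and triangulated categories to the extriangulated setting.
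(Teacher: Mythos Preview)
Your proof is correct and is precisely the standard argument the paper has in mind: the paper does not actually write out a proof but simply remarks, immediately before the lemma, that it follows directly from the definition of a cotorsion pair. Your argument is the natural way to unpack that remark, using the splitting of the approximation $\EE$-triangles (via $\EE(U^X,X)=0$ and $\EE(Y,V_Y)=0$) together with closure under direct summands for (a) and (b), and the long exact sequences in $\EE$ for (c); there is nothing to compare.
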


This lemma immediately yields the following conclusion.

\begin{cor}\label{cor1}
Let $(\U,\V)$ and $(\U',\V')$ be two cotorsion pair in $\B$. If $\U\subseteq \U'$ and $\V\subseteq \V'$, then $\U=\U'$ and $\V=\V'$.
\end{cor}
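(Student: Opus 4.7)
The plan is to use the characterizations in Lemma \ref{y1}(a) and (b), which identify each half of a cotorsion pair as the full right (respectively left) $\EE$-orthogonal of the other half. Once we have these descriptions, the corollary reduces to a short orthogonality argument in each direction.

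First I would show $\V'\subseteq \V$. Pick any $V'\in \V'$. Since $(\U',\V')$ is a cotorsion pair, Lemma \ref{y1}(a) gives $\EE(\U',V')=0$. The hypothesis $\U\subseteq \U'$ then forces $\EE(\U,V')=0$, so $V'\in \U^{\bot_1}$. Applying Lemma \ref{y1}(a) to the cotorsion pair $(\U,\V)$, this orthogonal is exactly $\V$, so $V'\in \V$, proving $\V'\subseteq \V$. Combined with the assumed inclusion $\V\subseteq \V'$, we get $\V=\V'$.

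The inclusion $\U'\subseteq \U$ is entirely symmetric, using Lemma \ref{y1}(b) in place of (a): for any $U'\in \U'$ we have $\EE(U',\V')=0$, hence $\EE(U',\V)=0$ because $\V\subseteq \V'$, so $U'\in {}^{\bot_1}\V=\U$. Together with $\U\subseteq \U'$ this gives $\U=\U'$.

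There is essentially no obstacle here — the argument is a direct two-line consequence of the orthogonality characterizations, and no approximation triangles or extriangulated-category machinery beyond Lemma \ref{y1} are needed. The only thing worth being careful about is not conflating the two halves: one must use the $(\U',\V')$ identity $\EE(\U',\V')=0$ and shrink \emph{one} side via the given inclusion to land in the orthogonal class associated with $(\U,\V)$, and do this separately for $\U$ and for $\V$.
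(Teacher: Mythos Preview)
Your proof is correct and is exactly the argument the paper has in mind: the corollary is stated as an immediate consequence of Lemma~\ref{y1}, and your orthogonality argument using parts (a) and (b) is precisely how one unpacks that claim.
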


The following lemma gives a sufficient condition when a pair $(\U,\V)$ becomes a cotorsion pair.

\begin{lem}\label{2}
Assume that $\B$ has enough projectives. Let $\U$ and $\V$ be two subcategories of $\B$ which are closed under direct summands and $\EE(\U,\V)=0$.
If $\U$ is extension closed, $\mathcal P\subseteq\U$ and for any object $B\in \B$, there exists an $\EE$-triangle $$B\rightarrow V^B\rightarrow U^B\overset{}{\dashrightarrow}$$
where $U^B\in\U$ and $V^B\in\V$, then $(\U,\V)$ is a cotorsion pair in $\B$.
\end{lem}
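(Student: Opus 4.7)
The plan is to verify the two defining conditions of a cotorsion pair. Condition (a), namely $\EE(\U,\V)=0$, is given by hypothesis, as is half of condition (b): for every $B\in\B$ we already have the $\EE$-triangle $B\to V^B\to U^B\dashrightarrow$ with $V^B\in\V$ and $U^B\in\U$. The entire task therefore reduces to constructing, for each $B\in\B$, an $\EE$-triangle of the form $V_B\to U_B\to B\dashrightarrow$ with $U_B\in\U$ and $V_B\in\V$.

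To build it, I would start by using the existence of enough projectives: pick any $B\in\B$ and choose an $\EE$-triangle
$$K\to P\to B\dashrightarrow$$
with $P\in\mathcal P$. Then apply the assumed construction to $K$, obtaining an $\EE$-triangle
$$K\to V^K\to U^K\dashrightarrow$$
with $V^K\in\V$ and $U^K\in\U$.

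Next, I would combine these two $\EE$-triangles via the $3\times 3$-lemma for extriangulated categories (a consequence of the axiom (ET4), see Nakaoka--Palu). Pushing out the deflation $P\to B$ along the inflation $K\to V^K$ yields a commutative diagram whose middle row is an $\EE$-triangle
$$V^K\to U_B\to B\dashrightarrow$$
and whose middle column is an $\EE$-triangle
$$P\to U_B\to U^K\dashrightarrow .$$
Since $P\in\mathcal P\subseteq\U$ and $U^K\in\U$, and $\U$ is extension closed, the object $U_B$ lies in $\U$. Setting $V_B:=V^K\in\V$, the middle row provides the desired $\EE$-triangle, which together with the hypothesis completes the verification that $(\U,\V)$ is a cotorsion pair.

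The main technical step is invoking the $3\times 3$ construction in the extriangulated setting; once that is in hand, the proof is simply a matter of tracking memberships. The condition $\mathcal P\subseteq\U$ and the extension closure of $\U$ are exactly what is needed to keep $U_B$ inside $\U$, so no additional assumptions on $\V$ (beyond $\EE(\U,\V)=0$ and closure under direct summands) are required.
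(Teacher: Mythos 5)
Your proposal is correct and follows essentially the same route as the paper's proof: take a projective presentation $K\to P\to B\dashrightarrow$, apply the hypothesis to $K$ to get $K\to V^K\to U^K\dashrightarrow$, and form the pushout diagram (the paper cites Nakaoka--Palu Proposition 3.15, which is exactly the $(\mathrm{ET4})$-based construction you describe) so that the middle column $P\to U_B\to U^K\dashrightarrow$ plus extension-closure of $\U$ gives $U_B\in\U$, while the middle row $V^K\to U_B\to B\dashrightarrow$ is the required $\EE$-triangle. The only cosmetic point is the phrase ``pushing out the deflation $P\to B$ along the inflation $K\to V^K$'': what you actually take is the pushout of the span $P\leftarrow K\to V^K$ (equivalently, of the inflation $K\to V^K$ along $K\to P$), but the diagram you then describe is the right one.
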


\begin{proof}
By the definition of a cotorsion pair, we need to show that for any object $B\in \B$, there exists an $\EE$-triangle  $$V_B\rightarrow U_B\rightarrow B\overset{}{\dashrightarrow}$$
where $U_B\in\U$ and $V_B\in\V$.
Since $\B$ has enough projectives, there exists an $\EE$-triangle
$$\Omega B\rightarrow P\rightarrow B\overset{}{\dashrightarrow}$$
where $P\in\mathcal P\subseteq\U$. By hypothesis, there exists an $\EE$-triangle
$$\Omega B\rightarrow V\rightarrow U\overset{}{\dashrightarrow}$$
where $U\in\U$ and $V\in\V$.  By \cite[Proposition 3.15]{NP},
we have a commutative diagram made of $\EE$-triangles as follows.
$$
\xymatrix{\Omega B\ar[r]\ar[d]&P\ar[r]\ar[d]&B\ar@{-->}[r]\ar@{=}[d]&\\
V\ar[d]\ar[r]&U'\ar[d]\ar[r]&B\ar@{-->}[r]&\\
U\ar@{-->}[d]\ar@{=}[r]&U\ar@{-->}[d]&\\
&&}$$
Since $P,U\in\U$ and $\U$ is extension closed, we obtain $U'\in\U$.

Thus the $\EE$-triangle $V\rightarrow U'\rightarrow B\overset{}{\dashrightarrow}$
 is what we need.
\end{proof}

From now on, we assume that  $\B$ has enough projectives and enough injectives. Then according to \cite[Section 5]{LN}, we can define the higher extension functor $\EE^i$.



The proof of the following lemma is left to the readers.

\begin{lem}\label{p0}
Let $\G$ be a proper $m$-term subcategory induced by $\W$. Then $\G$ is closed under direct summands.
\end{lem}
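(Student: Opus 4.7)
The plan is to proceed by induction on $m$, using a Baer-sum / pushout argument at the top of the resolution together with dimension-shift through the rigidity of $\W$. Introduce the shorthand $\G^{(k)}$ for the proper $k$-term subcategory induced by $\W$ (so $\G^{(0)}=\W$, $\G^{(k)}=\Cone(\G^{(k-1)},\W)$ for $k\geq 1$, and $\G=\G^{(m)}$); note that each $\G^{(k)}$ is closed under direct sums, and that $\W\subseteq\G^{(k)}$ for $k\geq 1$ by padding a resolution with split $\EE$-triangles.

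For the base case $m=1$, let $X_2\to W_1\to A\oplus B\overset{\delta}{\dashrightarrow}$ with $X_2,W_1\in\W$. The natural decomposition $\EE(A\oplus B,X_2)\cong\EE(A,X_2)\oplus\EE(B,X_2)$ writes $\delta=(\delta_A,\delta_B)$, and I realize the components as $\EE$-triangles $X_2\to Y_A\to A$ and $X_2\to Y_B\to B$. The original triangle is the pushout of their direct sum $X_2\oplus X_2\to Y_A\oplus Y_B\to A\oplus B$ along the codiagonal $\nabla\colon X_2\oplus X_2\to X_2$, and the $3\times 3$ construction coming from \cite[Proposition 3.15]{NP} produces an $\EE$-triangle $X_2\to Y_A\oplus Y_B\to W_1\dashrightarrow$. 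Since $\EE(W_1,X_2)=0$ by $1$-rigidity, this splits: $Y_A\oplus Y_B\cong X_2\oplus W_1\in\W$, so direct-summand closure of $\W$ gives $Y_A,Y_B\in\W$ and hence $A,B\in\G$.

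For the inductive step $m\geq 2$, I apply the same pushout argument to the top triangle $X_2\to W_1\to A\oplus B\dashrightarrow$ of the given $m$-term resolution. The essential vanishing $\EE(W_1,X_2)=0$ persists because I can dimension-shift $\EE^\bullet(W_1,-)$ through the $(m-1)$-term resolution of $X_2$: the $n$-rigidity $\EE^i(\W,\W)=0$ for $1\leq i\leq n$ together with $m\leq n$ gives $\EE(W_1,X_2)\cong\EE^m(W_1,X_{m+1})=0$. The splitting then yields $Y_A\oplus Y_B\cong X_2\oplus W_1\in\G^{(m-1)}$, and the inductive hypothesis delivers $Y_A,Y_B\in\G^{(m-1)}$, so $A$ sits in an $\EE$-triangle $X_2\to Y_A\to A$ with $X_2,Y_A\in\G^{(m-1)}$.

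To upgrade this to membership in $\G^{(m)}=\Cone(\G^{(m-1)},\W)$, I unwind $Y_A\in\G^{(m-1)}$ as $Y_A'\to W_A\to Y_A\dashrightarrow$ with $W_A\in\W$ and $Y_A'\in\G^{(m-2)}$, and apply the (ET4) axiom of \cite{NP} to the composable deflations $W_A\to Y_A\to A$. This yields an $\EE$-triangle $K\to W_A\to A\dashrightarrow$ together with a kernel triangle $Y_A'\to K\to X_2\dashrightarrow$. A horseshoe construction — whose obstruction $\EE(\W,Y_A')$ vanishes by another dimension shift — shows $K\in\G^{(m-1)}$, invoking the auxiliary extension-closure of $\G^{(m-1)}$ that is established by the same horseshoe method on a parallel induction. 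Hence $A\in\G^{(m)}$, and symmetrically $B\in\G^{(m)}$. The main obstacle is precisely this bookkeeping: verifying the chain of $\EE^i$-vanishings that makes (ET4) and the horseshoe fit cleanly together across the levels of the resolution.
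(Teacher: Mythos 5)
The paper leaves this proof to the reader, so there is no argument to compare against; I can only assess correctness. Your central computation is right: decomposing $\delta\in\EE(A\oplus B,X_2)$ as $(\delta_A,\delta_B)$, forming the direct sum $\EE$-triangle and pushing out along the codiagonal yields an $\EE$-triangle $X_2\to Y_A\oplus Y_B\to W_1\dashrightarrow$, the dimension shift $\EE(W_1,X_2)\cong\EE^m(W_1,X_{m+1})=0$ is valid under $n$-rigidity with $m\leq n$, and the resulting splitting $Y_A\oplus Y_B\cong X_2\oplus W_1\in\G^{(m-1)}$ together with the inductive hypothesis gives $Y_A,Y_B\in\G^{(m-1)}$.

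The weak point is your conversion of $A$ from the cone of $X_2\to Y_A$ (both terms merely in $\G^{(m-1)}$) into a member of $\G^{(m)}=\Cone(\G^{(m-1)},\W)$. You appeal to extension-closure of $\G^{(m-1)}$ (Lemma \ref{p1}, which appears \emph{after} the present statement) and to a ``parallel induction'' whose verification you explicitly defer; as written this is a genuine gap, and it also creates an ordering problem in the paper. It is avoidable. Realize $\delta_A=\iota_A^*\delta$ and $\delta_B=\iota_B^*\delta$ concretely by pulling back the deflation $W_1\to A\oplus B$ along the split inflations $\iota_A$, $\iota_B$. The resulting homotopy-cartesian squares give, in addition to $X_2\to Y_A\to A$ and $X_2\to Y_B\to B$, the complementary $\EE$-triangles $Y_A\to W_1\to B\dashrightarrow$ and $Y_B\to W_1\to A\dashrightarrow$, since the parallel edges of such a square have isomorphic cones. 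With $Y_A,Y_B\in\G^{(m-1)}$ already in hand, these two triangles place $A$ and $B$ directly in $\Cone(\G^{(m-1)},\W)=\G^{(m)}$, with no appeal to extension-closure, (ET4), or a horseshoe. As a bonus, the split $\EE$-triangle $X_2\to Y_A\oplus Y_B\to W_1\dashrightarrow$ then drops out of the morphism of $\EE$-triangles from $(X_2\to Y_A\to A)$ to $(Y_B\to W_1\to A)$ with identity third component, which is cleaner than the explicit codiagonal pushout.
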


We show the following property for proper $2$-term subcategories.

\begin{lem}\label{p1}
Let $\G$ be a proper $m$-term subcategory induced by $\W$. Then $\G$ is closed under extensions, hence it is an extriangulated subcategory of $\B$. Moreover, $\mathcal G$ has enough porjectives $\W$. If $\mathcal I\subsetneq \W$, then $\G$ has enough injectives $\Sigma \W$, in particular, any injective object is projective-injective in $\mathcal G$.
\end{lem}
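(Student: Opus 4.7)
My plan is to establish the four assertions in a specific order forced by their mutual dependencies: first the projectivity of $\W$ in $\G$ and enough projectives, then closure under extensions (which uses the projectivity), then enough injectives $\Sigma\W$ under the additional hypothesis $\I \subsetneq \W$ (which uses the extriangulated structure produced by the first two steps), and finally the projective-injective statement for $\I$. The main tools throughout are long exact sequences of $\EE^{i}$-groups combined with the octahedron axiom (ET4) of Nakaoka--Palu, together with induction on $m$.

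\textbf{Step 1: projectivity and enough projectives.} Observe first that $\W \subseteq \G$ by padding a resolution with the zero object (available by the standing convention $0 \in \W$), and that the first resolution triangle $X_{2} \to W_{1} \to X \dashrightarrow$ of any $X \in \G$ already supplies a deflation from $\W$. To see $W \in \W$ is $\G$-projective I must show $\EE(W,X) = 0$. Applying $\Hom(W,-)$ to each triangle $X_{i+1} \to W_{i} \to X_{i} \dashrightarrow$ and splicing the resulting long exact sequences -- using, at each step, the vanishings $\EE^{j}(W, W_{i}) = 0$ and $\EE^{j+1}(W, W_{i}) = 0$ afforded by $n$-rigidity (valid since $j+1 \leq m \leq n$) -- yields an isomorphism $\EE(W,X) \cong \EE^{m}(W, X_{m})$, and the right-hand side vanishes because $X_{m} \in \W$. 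Conversely, any $\G$-projective $P$ splits off $W_{1}$ in its first resolution triangle and therefore lies in $\W$ by Lemma~\ref{p0} and closure of $\W$ under summands.

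\textbf{Step 2: extension closure.} I argue by induction on $m$. The base $m = 1$ gives $\G = \W$, which is extension-closed by $1$-rigidity together with closure under direct sums. For the inductive step, given $X \to Z \to Y \dashrightarrow \delta$ with $X, Y \in \G$ and first resolutions $X_{2} \to W_{1}^{X} \to X$, $Y_{2} \to W_{1}^{Y} \to Y$, Step~1 supplies $\EE(W_{1}^{Y}, X) = 0$, so the pullback of $\delta$ along $W_{1}^{Y} \to Y$ is zero and $W_{1}^{Y} \to Y$ lifts through the deflation $Z \to Y$. A horseshoe construction (a standard application of (ET4) and the $(3 \times 3)$-lemma in extriangulated categories) then produces a deflation $W_{1}^{X} \oplus W_{1}^{Y} \to Z$ whose kernel $Z_{2}$ fits in an $\EE$-triangle $X_{2} \to Z_{2} \to Y_{2} \dashrightarrow$. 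Both $X_{2}, Y_{2}$ lie in the proper $(m-1)$-term subcategory, so by the induction hypothesis $Z_{2}$ does too, forcing $Z \in \G$. Together with Lemma~\ref{p0}, this makes $\G$ an extriangulated subcategory of $\B$.

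\textbf{Step 3: enough injectives and the injective case.} Assume now $\I \subsetneq \W$. For $X \in \G$ with first resolution $X_{2} \to W_{1} \to X$, fix an inflation $W_{1} \to I \to \Sigma W_{1} \dashrightarrow$ into an injective $I \in \I \subseteq \W$. Applying (ET4) to the pair $X_{2} \to W_{1} \to X$ and $W_{1} \to I \to \Sigma W_{1}$ yields an object $E$ fitting in $\EE$-triangles $X_{2} \to I \to E \dashrightarrow$ and $X \to E \to \Sigma W_{1} \dashrightarrow$. For $m = 2$ this places $E \in \Cone(\W, \I) = \Sigma\W$ at once; for general $m$ I iterate the (ET4)-construction along the resolution of $X_{2}$ until $E$ lands in $\Sigma\W$. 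Since $\Sigma W_{1} \in \Sigma\W \subseteq \G$ by a padding argument, $X \to E \to \Sigma W_{1}$ is the required inflation into $\Sigma\W$. That each $\Sigma W \in \Sigma\W$ is $\G$-injective follows by a dual $\EE$-chase: the vanishing $\EE^{j}(-, I) = 0$ for all $j \geq 1$ when $I \in \I$ identifies $\EE(X, \Sigma W) \cong \EE^{2}(X, W)$, and a rigidity chase on the resolution of $X$ in the contravariant variable -- dual to Step~1 -- kills this. Finally, any $I \in \I \subseteq \W$ is $\G$-projective by Step~1, the trivial triangle $0 \to I \to I \dashrightarrow$ places it in $\Sigma\W$, and $\EE_{\B}(-, I) = 0$ restricts to $\EE_{\G}(-, I) = 0$, so $I$ is projective-injective in $\G$. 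The \textbf{main obstacle} is organising the horseshoe and iterated (ET4)-diagrams in Steps~2 and~3 so that the auxiliary objects ($Z_{2}$ and $E$) really land in the expected sub-proper-term subcategories at each inductive step; this is what forces the order of the proof and demands careful bookkeeping of resolution depths, particularly in Step~3 when $m > 2$.
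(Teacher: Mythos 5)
Your proof for $m=2$ (that is, $\G=\Cone(\W,\W)$, which is the only case the paper actually uses and the only one its own proof addresses, since it explicitly takes both $W_0',W_1'\in\W$) is correct and runs along essentially the same lines as the paper's: a rigidity chase through the $\W$-presentation for (enough) projectivity, a horseshoe/pullback construction for extension-closure, and an (ET4) diagram for the injective coresolution. The only stylistic difference is in Step~3: the paper embeds the \emph{kernel} $W_1$ of $W_1\to W_0\to A$ into an injective and reads off $A\to\Sigma W_1\to\Sigma W_0$ from a $3\times 3$ diagram, whereas you embed the \emph{cover} and run the octahedron once; both yield the required inflation into $\Sigma\W$ with cone $\Sigma W_1\in\G$.

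The general-$m$ part of Step~3 has a genuine gap. You write ``for general $m$ I iterate the (ET4)-construction along the resolution of $X_2$ until $E$ lands in $\Sigma\W$.'' This cannot work: $E$ lies in $\Cone(X_2,I)$, and for $m>2$ the kernel $X_2$ is no longer in $\W$, so $\Cone(X_2,I)$ is strictly larger than $\Cone(\W,\I)=\Sigma\W$. Worse, the next presentation triangle $X_3\to W_2\to X_2$ \emph{ends} at $X_2$ while $X_2\to I\to E$ \emph{starts} there, so there is no octahedron composing them that would push $E$ into $\Sigma\W$. In fact, the claim you are trying to prove is false once $m>2$: the rigidity chase gives $\EE(X,\Sigma W)\cong\EE^2(X,W)\cong\EE(X_2,W)$, and with $X_2\notin\W$ this group is generically nonzero, so $\Sigma\W$ fails to be rigid against $\G$, let alone injective there. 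This is confirmed by the paper's own Lemma~\ref{enough2}, which identifies the injectives of $\W_i$ as $\Sigma^i\W$, not $\Sigma\W$. Steps~1 and~2 of your proof do survive for general $m$, but the ``enough injectives $\Sigma\W$'' assertion in the lemma should be read as a statement about the two-term case only (or restated with $\Sigma^{m-1}\W$), and you should flag this rather than attempt to prove it in the stated generality.
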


\begin{proof}
Assume we have an $\EE$-triangle $A\to B\to C\dashrightarrow$ where $A,C\in \G$, then we have the following commutative diagram
$$\xymatrix{
&W_1' \ar@{=}[r] \ar[d] &W_1' \ar[d] \\
A \ar[r] \ar@{=}[d] & A\oplus W_0' \ar[r] \ar[d] &W_0' \ar[d] \ar@{-->}[r]&\\
A \ar[r] &B \ar[r] \ar@{-->}[d] &C \ar@{-->}[r] \ar@{-->}[d] & \\
&&&
}
$$
where $W_0',W_1'\in \W$. The second row splits since $\W$ is $2$-rigid and $\G\subseteq \W^{\bot_1}$.  Then we have the following commutative diagram
$$\xymatrix{
W_1 \ar@{=}[r] \ar[d] &W_1 \ar[d]\\
W_1 \oplus W_1' \ar[r] \ar[d] &W_0\oplus W_0' \ar[r] \ar[d] &B \ar@{=}[d] \ar@{-->}[r] &\\
W_1' \ar[r] \ar@{-->}[d] &A\oplus W_0' \ar@{-->}[d] \ar[r] &B \ar@{-->}[r] &\\
&&
}
$$
which implies $B\in \G$.\\
Since $\G\subseteq \W^{\bot_1}$, $\W$ is the enough projectives in $\G$. \\
Assume $\mathcal I\subsetneq \W$. Any object $A\in \G$ admits a commutative diagram
$$\xymatrix{
W_1 \ar[r]^c \ar@{=}[d] &W_0 \ar[r]^a \ar[d]^i &A \ar@{-->}[r] \ar[d]^{a'} &\\
W_1 \ar[r] &I_0 \ar[r] \ar[d] &\Sigma W_1 \ar@{-->}[r] \ar[d] &\\
&\Sigma W_0 \ar@{=}[r] \ar@{-->}[d] &\Sigma W_0 \ar@{-->}[d]\\
&&
}
$$
where $W_1,W_0\in \W$ and $I_0\in \mathcal I$. For any object $\Sigma W$ where $W\in \W$, the morphisms in $\Hom_{\B}(W_1,\Sigma W)$ factor through $\mathcal I$, which implies it also factors through $c$, we have the following exact sequence
$$\Hom_{\B}(W_0,\Sigma W) \xrightarrow{\Hom_{\B}(c,\Sigma W)} \Hom_{\B}(W_1,\Sigma W)\xrightarrow{0} \EE(A,\Sigma W)\to \EE(W_0,\Sigma W)=0.$$
This implies that $\Sigma \W$ is the enough injectives in $\G$. Moreover, since $\mathcal I\subseteq \W\cap \Sigma \W$, we get that $\mathcal I$ is projective-injective in $\mathcal G$.
\end{proof}

\section{Cotorsion pairs and support $\tau$-titling subcategories}

In this section, let $\G=\Cone(\W,\W)$ be a proper $2$-term subcategory. From this section, we assume that $\B$ satisfies condition (WIC), see \cite[Condition 5.8]{NP}:

\begin{itemize}
\item If we have a deflation $h: A\xrightarrow{~f~} B\xrightarrow{~g~} C$, then $g$ is also a deflation.
\item If we have an inflation $h: A\xrightarrow{~f~} B\xrightarrow{~g~} C$, then $f$ is also an inflation.
\end{itemize}

Note that this condition automatically holds on triangulated categories and Krull-Schmidt exact categories.

We show the following extriangulated version Wakamatsu's Lemma.

\begin{lem}\label{Wa}
Let $\D$ be an extension closed subcategory of $\B$.  If we have an $\EE$-triangle $A\xrightarrow{d} D \to B\dashrightarrow$ where $d$ is a minimal
 left $\D$-approximation, then $B\in {^{\bot_1}}\D$.
\end{lem}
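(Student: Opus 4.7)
The plan is to fix $D' \in \D$ and an arbitrary class $\beta \in \EE(B,D')$, realize it by an $\EE$-triangle $\beta\colon D' \to E \xrightarrow{p} B \dashrightarrow$, and prove $\beta = 0$. The idea is to pull $\alpha\colon A \xrightarrow{d} D \xrightarrow{e} B \dashrightarrow$ back along $p$, extract a splitting from the minimality of $d$, and then feed this into the long exact sequence coming from $\alpha$.

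First I would apply \cite[Proposition 3.15]{NP} to the pair $(\alpha, \beta)$, which share the object $B$, to produce an object $F$ sitting in a $3 \times 3$ commutative diagram in which the top column
$$
D' \xrightarrow{i'} F \xrightarrow{f'} D \dashrightarrow
$$
and the middle row
$$
A \xrightarrow{a'} F \xrightarrow{b'} E \dashrightarrow
$$
are both $\EE$-triangles, with $f' \circ a' = d$. Since $\D$ is extension closed and $D, D' \in \D$, the top column forces $F \in \D$.

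Next I would use the approximation property. Because $F \in \D$ and $d$ is a left $\D$-approximation, $a'$ lifts as $a' = g \circ d$ for some $g\colon D \to F$, and then $(f' \circ g) \circ d = f' \circ a' = d$. Left minimality of $d$ upgrades $f' \circ g$ to an automorphism of $D$, hence $f'$ is split epic, so the top column splits. In other words, the pulled-back class $e^{\ast}\beta \in \EE(D, D')$ classified by the top column vanishes. Applying $\Hom_{\B}(-, D')$ to $\alpha$ yields, by \cite[Corollary 3.12]{NP}, the exact sequence
$$
\Hom_{\B}(D, D') \xrightarrow{-\circ d} \Hom_{\B}(A, D') \xrightarrow{\partial} \EE(B, D') \xrightarrow{e^{\ast}} \EE(D, D').
$$
The left map is surjective since $d$ is a left $\D$-approximation and $D' \in \D$, so $\partial = 0$; combined with $e^{\ast}\beta = 0$, exactness forces $\beta = 0$. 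Thus $\EE(B, D') = 0$ for every $D' \in \D$, i.e.\ $B \in {^{\bot_1}}\D$.

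The only delicate step, and the main obstacle, is the deduction that the top column splits: this is where left minimality of $d$ (a statement about endomorphisms of $D$) is converted into the vanishing of an $\EE$-class ending in $D$. After that, both the approximation property and the long exact sequence play purely formal roles.
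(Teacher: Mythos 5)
Your proof is correct and runs on the same Wakamatsu engine as the paper's: extension-closedness of $\D$ puts the pulled-back middle term in $\D$, the approximation property supplies a factorization through $d$, and left minimality upgrades the resulting endomorphism of $D$ to an automorphism, forcing a split. You navigate the long exact sequence from the other end, though. The paper shows $\EE(d,D')$ is injective by testing $\eta\in\Ker\EE(d,D')$: vanishing of $d^{*}\eta$ yields a morphism of $\EE$-triangles with split top row, from which a section $y'$ is extracted so that the composite $gy'\colon A\to C$ can be approximated. You instead show $\EE(e,D')$ is the zero map: you realize $\beta$, form the dual $3\times 3$ diagram of \cite[Proposition~3.15]{NP} on the two $\EE$-triangles ending at $B$, and the middle row hands you the map $a'\colon A\to F$ (with $f'a'=d$) directly, with $F\in\D$ by extension closure. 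By exactness of the very sequence both proofs invoke, $\Ker\EE(d,D')=\Im\EE(e,D')$, so the two formulations are equivalent; your arrangement is marginally cleaner because the $3\times 3$ diagram produces the key map into the $\D$-object for free rather than via a section of a split triangle, and the final deduction $\beta=0$ is then immediate from $\partial=0$ and $e^{*}\beta=0$.
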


\proof Applying the functor $\Hom_{\B}(-,D')$ with $D'\in\D$ to the $\EE$-triangle $A\xrightarrow{d} D \to B\dashrightarrow$, we have the following
exact sequence:
$$\Hom_{\B}(D,D')\xrightarrow{\Hom_{\B}(d,D')}\Hom_{\B}(A,D')\xrightarrow{\alpha}\EE(B,D')\xrightarrow{\beta} \EE(D,D')\xrightarrow{\EE(d,D')}\EE(A,D').$$
Since $d$ is a left $\D$-approximation, we get that $\Hom_{\B}(d,D')$ is an epimorphism.
It follows that $\alpha=0$. We claim that the $\EE(d,D')$ is a monomorphism whence $\beta=0$. This forces $\EE(B,D')=0$ which implies $B\in {^{\bot_1}}\D$.

To see that the $\EE(d,D')$ is a monomorphism, let $\eta\in\EE(D,D')$ be any $\EE$-extension, realized by
an $\EE$-triangle $$\xymatrix{D'\ar[r]^u&C\ar[r]^{v}&D\ar@{-->}[r]^{\eta}&}$$ such that
$\EE(d,D')(\eta)=0$. We obtain a morphism of $\EE$-triangles
$$\xymatrix{D'\ar[r]^{x}\ar@{=}[d]&M\ar[r]^y\ar[d]^g&A\ar@{-->}[r]^{0}\ar[d]^d&\\
D'\ar[r]^{u}&C\ar[r]^{v}&D\ar@{-->}[r]^{\eta}&.}$$
By \cite[Corollary 3.5]{NP}, we know that $y$ is a retraction, hence there exists a morphism $y'\colon A\to M$ such that $yy'=1$.
Since $\D$ is extension closed, we have $C\in\D$. Since $d$ is a left $\D$-approximation, there exists a morphism $w\colon D\to C$ such that $wd=gy'$.
It follows that $d=dyy'=vgy'=vwd$. Since $d$ is left minimal, we have that $vw$ is an isomorphism.
In particular, $v$ is a retraction. By \cite[Corollary 3.5]{NP}, $\eta$ splits and then $\eta=0$,  as desired.  \qed

\begin{rem}
Under the condition {\rm (WIC)}, if we have an inflation $d:A\to D$ which is a left $\D$-approximation, then we can take a minimal left $\D$-apporximation $d_0:A\to D_0$ which is still an  inflation.
\end{rem}

We have the following corollary, which is a special case of the dual of Lemma \ref{2}.

\begin{cor}\label{c1}
Let $\D$ be an extension closed subcategory which contains $\mathcal I$. If any object $X$ admits an $\EE$-triangle $X\to D\to E\dashrightarrow$ where $D\in \D$ and $E\in {^{\bot_1}}\D$, then $({^{\bot_1}}\D,\D)$ is a cotorsion pair.
\end{cor}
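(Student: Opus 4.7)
The plan is to verify the hypotheses of Lemma \ref{2} for the pair $(\U,\V) := ({^{\bot_1}}\D, \D)$; once this is done the corollary is immediate.

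I would first dispatch the routine conditions on $\U = {^{\bot_1}}\D$: the identity $\EE(\U, \V) = 0$ is built into the definition, $\U$ is closed under direct summands because $\EE(-,D)$ is additive, and $\mathcal P \subseteq \U$ because $\EE(P,-) = 0$ for every projective $P$. To check that $\U$ is extension closed, I would take an $\EE$-triangle $X \to Y \to Z \dashrightarrow$ with $X, Z \in \U$ and any $D' \in \D$, and read off $\EE(Y, D') = 0$ from the long exact sequence obtained by applying $\Hom_\B(-, D')$, using that the flanking terms $\EE(Z,D')$ and $\EE(X,D')$ both vanish.

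The last hypothesis required by Lemma \ref{2} is the existence, for every $B \in \B$, of an $\EE$-triangle $B \to V^B \to U^B \dashrightarrow$ with $V^B \in \V$ and $U^B \in \U$; this is precisely the datum furnished by the corollary upon taking $V^B = D$ and $U^B = E$. Lemma \ref{2} then manufactures the missing left approximation $V_B \to U_B \to B \dashrightarrow$ for every $B$ (concretely: apply the hypothesis to a syzygy $\Omega B$ and paste with $\Omega B \to P \to B \dashrightarrow$ via the $3\times 3$-lemma, the middle column showing that the new middle term lies in the extension-closed $\U$ as an extension of an object of $\U$ by a projective). This yields that $({^{\bot_1}}\D, \D)$ is a cotorsion pair.

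I do not expect any substantive obstacle. The only slightly delicate point is that Lemma \ref{2} requires $\V = \D$ to be closed under direct summands, which is being taken as an implicit convention on the subcategory $\D$. The extra assumptions ($\D$ extension closed and $\mathcal I \subseteq \D$) are not used in the verification above; they are present to match the dualized setting in which the corollary will be invoked later, and under those assumptions one could alternatively construct the left approximation symmetrically using an injective envelope $B \to I \to \Sigma B \dashrightarrow$ with $I \in \mathcal I \subseteq \D$ in place of a projective cover, exploiting extension-closedness of $\D$ instead of that of $\U$.
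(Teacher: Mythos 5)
Your proof is correct and is essentially the argument the paper intends: one reduces to Lemma \ref{2} by noting that $\U := {^{\bot_1}}\D$ is automatically extension closed and contains $\mathcal P$, so the hypothesis $X\to D\to E$ furnishes exactly the $\EE$-triangle $B\to V^B\to U^B$ that Lemma \ref{2} requires. Your observation that the stated hypotheses ``$\D$ extension closed'' and ``$\mathcal I\subseteq\D$'' are unused by this reduction is also right, as is the caveat that $\D$ must be closed under direct summands to invoke Lemma \ref{2}. Two remarks. First, the paper labels this ``a special case of the dual of Lemma \ref{2}'', but with your bookkeeping it is Lemma \ref{2} itself that applies: the hypothesis supplies the outgoing triangle and the lemma manufactures the incoming one $V_B\to U_B\to B$ via the projective cover; the honest dual would take the incoming triangle as input, which is not what is given. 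Second, the closing aside should be dropped: the injective envelope $B\to I\to\Sigma B$ is again a triangle leaving $B$, and pasting it with the hypothesis applied to $\Sigma B$ cannot produce the missing triangle that arrives at $B$. The conditions $\D$ extension closed and $\mathcal I\subseteq\D$ would be exactly what the genuine dual of Lemma \ref{2} needs (to manufacture $B\to V^B\to U^B$ from a given $V_B\to U_B\to B$), but that is the opposite of the direction required here; in this corollary they are simply spare, as you correctly observed.
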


We introduce the following notions.

Let $\B'$ be a subcategory of $\B$. Denote by $[\B'](A,B)$ the subgroup of $\Hom_{\B}(A,B)$ such that $f\in [\B'](A,B)$ if $f$ factors through an object in $\B'$.

We denote by $\underline \B'$ the category which has the same objects as $\B'$, and $$\Hom_{\uB'}(A,B)=\Hom_{\B}(A,B)/[\mathcal I](A,B)$$ where $A,B\in \B'$. For any morphism $f\in \Hom_{\B}(A,B)$, we denote its image in $\Hom_{\uB'}(A,B)$ by $\underline f$.

We denote by $\overline \B'$ the category which has the same objects as $\B'$, and $$\Hom_{\oB'}(A,B)=\Hom_{\B}(A,B)/[\Sigma \W](A,B)$$ where $A,B\in \B'$. For any morphism $f\in \Hom_{\B}(A,B)$, we denote its image in $\Hom_{\oB'}(A,B)$ by $\overline f$.

\smallskip

In this section, let $\W$ be a $2$-rigid subcategory and $\G$ be a $2$-term subcategory induced by $\W$. We assume that $\mathcal I\subsetneq \W$.


\begin{lem}\label{exact}
If we have $\EE$-triangle $X\xrightarrow{~x~} Y\xrightarrow{~y~} Z\dashrightarrow$ in $\mathcal G$, then
$$\Hom_{\oB}(W,X)\xrightarrow{\Hom_{\oB}(W,\overline x)} \Hom_{\oB}(W,Y)\xrightarrow{\Hom_{\oB}(W,\overline y)} \Hom_{\oB}(W,Z)\to 0$$
is exact for any $W\in \overline \W$.
\end{lem}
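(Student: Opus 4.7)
The plan is to verify separately that $\Hom_{\oB}(W,\overline y)$ is surjective and that the sequence is exact at the middle term, in both cases combining the standard long exact sequence obtained by applying $\Hom_\B(W,-)$ to an $\EE$-triangle with the three key features of $\W$: it provides enough projectives in $\G$ (Lemma \ref{p1}), it is $2$-rigid, and it contains $\mathcal I$.

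For surjectivity, since $X\in\G$ and $W\in\W$, Lemma \ref{p1} gives $\EE(W,X)=0$, so the long exact sequence associated to $X\xrightarrow{x}Y\xrightarrow{y}Z\dashrightarrow$ ends with
$$\Hom_\B(W,Y)\xrightarrow{y_\ast}\Hom_\B(W,Z)\to\EE(W,X)=0,$$
whence $y_\ast$ is surjective; this property descends to the quotient $\oB$ obtained by modding out $[\Sigma\W]$.

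For middle exactness, take $f\colon W\to Y$ with $\overline{yf}=0$, so $yf=\beta\alpha$ with $W\xrightarrow{\alpha}\Sigma W'\xrightarrow{\beta}Z$ and $W'\in\W$. I first replace $\Sigma W'$ by an injective. The $\EE$-triangle $W'\to I\to\Sigma W'\dashrightarrow$ defining $\Sigma W'$, with $I\in\mathcal I$, produces
$$\Hom_\B(W,I)\to\Hom_\B(W,\Sigma W')\to\EE(W,W')=0,$$
the vanishing coming from $2$-rigidity of $\W$. Thus $\alpha$ lifts through $I$, and $yf$ factors as $W\xrightarrow{\delta}I\xrightarrow{\gamma}Z$ with $I\in\mathcal I$. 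Since $\mathcal I\subseteq\W$, Lemma \ref{p1} makes $I$ projective in $\G$, so there exists $\gamma'\colon I\to Y$ with $y\gamma'=\gamma$. Setting $f':=f-\gamma'\delta$, one has $yf'=0$, while $f-f'=\gamma'\delta$ factors through $I\in\mathcal I\subseteq\Sigma\W$, giving $\overline f=\overline{f'}$ in $\oB$. Exactness of the long exact sequence at $\Hom_\B(W,Y)$ then yields $f''\colon W\to X$ with $xf''=f'$, and hence $\overline f=\Hom_{\oB}(W,\overline x)(\overline{f''})$, as required.

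The main subtlety is the step of replacing the a priori factorization through an arbitrary $\Sigma W'$ by a factorization through an injective $I\in\mathcal I$; this is precisely where $2$-rigidity of $\W$ enters. Once the factorization passes through $I$, the hypothesis $\mathcal I\subseteq\W$ does the rest, guaranteeing that the correction term $\gamma'\delta$ can be both constructed and recognized as zero in $\oB$.
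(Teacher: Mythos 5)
Your argument is correct and follows the same route as the paper: apply $\Hom_\B(W,-)$ to the $\EE$-triangle, using that $\W$ is projective in $\G$, then take a morphism $f$ with $\overline{yf}=0$, replace its factorization through $\Sigma\W$ by one through $\mathcal I$, and use projectivity of $\mathcal I$ to correct $f$ by a morphism factoring through $\mathcal I\subseteq\Sigma\W$. The only cosmetic difference is that you spell out why $\Hom_{\uB}(\W,\Sigma\W)=0$ via the $\EE$-triangle $W'\to I\to\Sigma W'\dashrightarrow$ and $\EE(W,W')=0$, whereas the paper invokes this vanishing directly.
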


\begin{proof}
Since $\W$ is projective in $\G$, then for $\EE$-triangle $X\xrightarrow{~x~} Y\xrightarrow{~y~} Z\dashrightarrow$ in $\mathcal G$, we get an exact sequence
$$\Hom_{\B}(W,X)\xrightarrow{\Hom_{\B}(W, x)} \Hom_{\B}(W,Y)\xrightarrow{\Hom_{\B}(W,y)} \Hom_{\B}(W,Z)\to 0.$$
Hence $\Hom_{\oB}(W,\overline y)$ is an epimorphism and $\Hom_{\oB}(W,\overline {yx})=0$. For any morphism $\overline w\in  \Hom_{\oB}(W,Y)$ such that $\overline {yw}=0$, since $\Hom_{\uB}(\W,\Sigma \W)=0$, we have $yw: W\xrightarrow{w_1} I \xrightarrow{w_2} Z$ where $I\in \mathcal I$. Since $I$ is projective in $\G$, there is a morphism $w_3:I\to Y$ such that $yw_3=w_2$, hence $y(w-w_3w_1)=0$. Then there is a morphism $w':W\to X$ such that $w_1w'=w-w_3w_1$. Hence $\overline w=\overline {w_1w'}$, which implies that
$$\Hom_{\oB}(W,X)\xrightarrow{\Hom_{\oB}(W,\overline x)} \Hom_{\oB}(W,Y)\xrightarrow{\Hom_{\oB}(W,\overline y)} \Hom_{\oB}(W,Z)\to 0$$
is an exact sequence.
\end{proof}

\begin{lem}
$\overline \G$ has enough projectives $\overline \W$.
\end{lem}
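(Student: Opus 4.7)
The plan is to verify the two defining conditions for $\overline{\W}$ to provide enough projectives in $\overline{\G}$: first, that every object $\overline{W}$ with $W\in\W$ is projective in $\overline{\G}$, and second, that every $\overline{X}\in\overline{\G}$ admits a deflation from some object of $\overline{\W}$. Both follow formally from Lemma \ref{exact} combined with the definition $\G=\Cone(\W,\W)$.

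For the ``enough'' half, I would fix $X\in\G$. By the definition of $\G$, there exists an $\EE$-triangle $W_1\to W_0\xrightarrow{p} X\dashrightarrow$ with $W_0,W_1\in\W$; since $\W\subseteq\G$ (any $W\in\W$ is realized by $0\to W\xrightarrow{\id} W\dashrightarrow$), all three objects lie in $\G$, so Lemma \ref{exact} applies to this triangle. For every $W'\in\W$ it yields surjectivity of
$$\Hom_{\overline{\B}}(W',W_0)\xrightarrow{\Hom_{\overline{\B}}(W',\overline{p})}\Hom_{\overline{\B}}(W',X),$$
so $\overline{p}:\overline{W_0}\to\overline{X}$ is a right $\overline{\W}$-approximation, providing the required ``projective cover'' of $\overline{X}$ arising as the image in $\overline{\G}$ of a structural deflation.

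For the projectivity of $\overline{W}$, I would take any $\EE$-triangle $X\to Y\to Z\dashrightarrow$ in $\G$ (these represent the short exact sequences that $\overline{\G}$ inherits from $\G$) and observe that Lemma \ref{exact} says exactly that $\Hom_{\overline{\B}}(W,Y)\twoheadrightarrow\Hom_{\overline{\B}}(W,Z)$ is surjective. Since $\Hom_{\overline{\G}}(\overline{W},-)$ coincides with $\Hom_{\overline{\B}}(W,-)$ on objects of $\G$, this is precisely the lifting property characterizing projectivity of $\overline{W}$ in $\overline{\G}$.

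The main obstacle is essentially notational: pinning down the precise sense in which $\overline{\G}$ carries an ``exact'' or extriangulated structure at this stage, before the main theorem's hypothesis makes $\overline{\G}$ abelian. Once one fixes an interpretation compatible with the $\EE$-triangles inherited from $\G$, Lemma \ref{exact} together with the defining triangle for $X\in\Cone(\W,\W)$ supplies all of the real content, and no further computation is needed.
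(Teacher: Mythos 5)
Your proposal correctly isolates Lemma~\ref{exact} and the defining triangle $W_1\to W_0\xrightarrow{p}X\dashrightarrow$, but it silently replaces the \emph{categorical} notions of epimorphism and projectivity in the additive quotient $\overline\G$ by the weaker notions detected only by images of $\EE$-triangles. That substitution is exactly where the substance of the paper's proof lives, and dismissing it as ``essentially notational'' is where your argument goes wrong. At this stage $\overline\G$ carries no declared exact or extriangulated structure; it is an additive category, and both halves of the claim must be interpreted categorically so that Proposition~\ref{prop} (the equivalence $\overline\G\simeq\mod\overline\W$) can be applied.

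For the ``enough'' half you conclude from Lemma~\ref{exact} that $\overline p\colon W_0\to X$ is a right $\overline\W$-approximation, but a right approximation need not be an epimorphism. Showing $\overline p$ is epic requires taking an arbitrary $\overline x\colon X\to Y$ with $\overline{xp}=0$ and proving $\overline x=0$. The paper does this by noting that $xp$ factors through $\mathcal I$ (because any map $W_0\to\Sigma W$ does), hence through the inflation $W_0\xrightarrow{i}I_0$ in the auxiliary diagram, and then concluding that $x$ factors through $\Sigma W_1\in\Sigma\W$; none of this is packaged in Lemma~\ref{exact}.

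For the ``projective'' half you only check that $\Hom_{\oB}(W,-)$ sends the map $Y\to Z$ from an $\EE$-triangle in $\G$ to a surjection. But projectivity of $W$ in $\overline\G$ requires lifting against \emph{arbitrary} epimorphisms of $\overline\G$, and those do not all arise as images of $\EE$-triangle deflations. The paper therefore starts from an arbitrary epimorphism $\overline f\colon A\to B$ of $\overline\G$, constructs an $\EE$-triangle $A\xrightarrow{\svecv{f}{a'}}B\oplus\Sigma W_1\to C\dashrightarrow$, uses the epimorphism hypothesis together with Lemma~\ref{exact} to show $\Hom_{\oB}(\W,C)=0$, and then proves $C\in\Sigma\W$; only after this reduction does Lemma~\ref{exact} yield the required surjectivity of $\Hom_{\oB}(W,A)\to\Hom_{\oB}(W,B)$. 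This reduction of arbitrary epimorphisms to $\EE$-triangles is the real content of the lemma, and it is missing from your proposal.
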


\begin{proof}
For any object $A\in \G$, we have the following commutative diagram
$$\xymatrix{
W_1 \ar[r] \ar@{=}[d] &W_0 \ar[r]^a \ar[d]^i &A \ar@{-->}[r] \ar[d]^{a'} &\\
W_1 \ar[r] &I_0 \ar[r] \ar[d] &\Sigma W_1 \ar@{-->}[r] \ar[d] &\\
&\Sigma W_0 \ar@{=}[r] \ar@{-->}[d] &\Sigma W_0 \ar@{-->}[d]\\
&&
}
$$
where $W_1,W_0\in \W$ and $I_0\in \mathcal I$. For any morphism $x:A\to X$ such that $\overline {xa}=0$, we have $xa$ factors through $\mathcal I$, hence it factors through $i$. Then there is a morphism $w_1:\Sigma W_1\to X$ such that $w_1a'=x$, which implies that $\overline x=0$. Hence $\overline a$ is an epimorphism. \\
For an epimorphism $\overline f:A\to B$ in $\overline \G$, we have the following commutative diagram.
$$\xymatrix{
W_0 \ar[r] \ar[d] & I_0 \ar[r] \ar[d] &\Sigma W_0 \ar@{=}[d] \ar@{-->}[r] &\\
A \ar[r]^{a'} \ar[d]_f &\Sigma W_1 \ar[r] \ar[d]^h &\Sigma W_0 \ar@{=}[d] \ar@{-->}[r] &\\
B \ar[r]^g &C \ar[r] &\Sigma W_0\ar@{-->}[r] &
}
$$
Since $\G$ is extension closed, we have $C\in \G$.
Since $\overline f$ is an epimorphism and $\overline {gf}=0$, we get $g$ factors through $\Sigma \W$. By Lemma \ref{exact}, for any object $W$, we have the following exact sequence
$$\Hom_{\oB}(W, B) \xrightarrow{\Hom_{\oB}(W,\overline g)=0} \Hom_{\oB}(W,C)\to  \Hom_{\oB}(W,\Sigma W_0)=0$$
which implies $ \Hom_{\oB}(W,C)=0$. $C$ admits the following commutative diagram
$$\xymatrix{
W_1'' \ar[r] \ar@{=}[d] &W_0''\ar[r]^c \ar[d]^{i'} &C\ar@{-->}[r] \ar[d] &\\
W_1'' \ar[r] &I_0'' \ar[r] \ar[d] &\Sigma W_1'' \ar@{-->}[r] \ar[d]^{w'} &\\
&\Sigma W_0'' \ar@{=}[r] \ar@{-->}[d] &\Sigma W_0'' \ar@{-->}[d]\\
&&
}
$$
where $W_1'',W_0''\in \W$ and $I_0''\in \mathcal I$. Since $c$ factors through $\mathcal I$, it factors through $i'$, which implies that $1_{\Sigma W_0''}$ factors through $w'$. Hence $C$ is a direct summand of $\Sigma W_1''$, which means $C\in \Sigma \W$. Since we have an $\EE$-triangle $A\xrightarrow{\svecv{f}{a'}} B\oplus \Sigma W_1 \xrightarrow{\svech{g}{-h}} C\dasharrow$, by Lemma \ref{exact}, we get an epimorphism $\Hom_{\oB}(W, A) \xrightarrow{\Hom_{\oB}(W,\overline f)=0} \Hom_{\oB}(W,B)\to 0$ for any object $W\in \overline \W$. Hence $\overline \W$ is the enough projetives in $\overline \G$.
\end{proof}

We have the following proposition.

\begin{prop}\label{prop}
$\overline {\mathcal G} \simeq \mod \overline \W\simeq \mod \underline \W$.
\end{prop}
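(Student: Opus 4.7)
The plan is to construct an equivalence $F\colon \overline{\G}\to \mod\overline{\W}$ by $F(A)=\Hom_{\oB}(-,A)|_{\overline{\W}}$, and then identify $\mod\overline{\W}\simeq\mod\underline{\W}$ by showing the ideals $[\Sigma\W]$ and $[\I]$ coincide on $\W$. Lemma~\ref{exact} applied to the defining $\EE$-triangle $W_1\xrightarrow{c}W_0\xrightarrow{a}A\dashrightarrow$ of any $A\in\G$ immediately gives a finite presentation
\[
\Hom_{\overline{\W}}(-,W_1)\xrightarrow{\overline{c}_{*}}\Hom_{\overline{\W}}(-,W_0)\longrightarrow F(A)\longrightarrow 0
\]
in $\mod\overline{\W}$, so $F$ is well defined.

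For fullness, a morphism $\phi\colon F(A)\to F(B)$ is determined by $\overline{g}:=\phi_{W_0}(\overline{a})\in\Hom_{\oB}(W_0,B)$, and naturality combined with $ac=0$ forces $\overline{gc}=0$. Realizing $gc=q_1p_1$ as a factorization through $\Sigma\W$ and using the surjection $\Hom(W_0,\Sigma W')\twoheadrightarrow\Hom(W_1,\Sigma W')$ (from $\EE(A,\Sigma\W)=0$, Lemma~\ref{p1}) to extend $p_1$ along $c$, I can modify $g$ within its class in $\oB$ so that $gc=0$ strictly in $\B$; the universal property of the cofiber $a$ then lifts $g$ to $f\colon A\to B$ with $fa=g$ and $F(\overline{f})=\phi$.

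Faithfulness is the principal obstacle. If $\overline{fa}=0$, realize $fa$ as a factorization through $\Sigma\W$, extend along the injective hull $i\colon W_0\to I_0$ using $\EE(\Sigma W_0,\Sigma W')=0$, and use the middle row $W_1\to I_0\to\Sigma W_1\dashrightarrow$ of the $3\times 3$ diagram from Lemma~\ref{p1} to produce $\lambda\colon \Sigma W_1\to B$ with $\lambda a'a=fa$; since $\lambda a'$ factors through $\Sigma W_1\in\Sigma\W$, replacing $f$ by $f':=f-\lambda a'$ preserves the class $\overline{f}$ but reduces to $f'a=0$ strictly in $\B$. Now apply $\Hom(-,B)$ to the rightmost column $A\xrightarrow{a'}\Sigma W_1\to \Sigma W_0\dashrightarrow\zeta$: the obstruction to factoring $f'$ through $a'$ is $f'_{*}\zeta\in\EE(\Sigma W_0,B)$. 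The key observation is that the $3\times 3$ construction realizes the right column as the pushout of the injective-hull triangle $W_0\xrightarrow{i}I_0\to\Sigma W_0\dashrightarrow\mu$ along $a$, giving the identity $\zeta=a_{*}\mu$. Naturality of $\EE$ then yields $f'_{*}\zeta=(f'a)_{*}\mu=0$, so $f'$ factors through $a'$ and hence through $\Sigma W_1\in\Sigma\W$, whence $\overline{f}=\overline{f'}=0$.

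For essential surjectivity, given $M\in\mod\overline{\W}$ with presentation $\Hom_{\overline{\W}}(-,W_1)\xrightarrow{\overline{c}}\Hom_{\overline{\W}}(-,W_0)\to M\to 0$, lift $\overline{c}$ to $c\colon W_1\to W_0$ and form the combined map $\svecv{c}{\iota}\colon W_1\to W_0\oplus I$, where $\iota\colon W_1\to I\in\I$ is an injective hull. This map is an inflation, factoring as $\mathrm{diag}(\id_{W_0},\iota)\circ\svecv{c}{\id}$, a composition of a split inflation with a direct sum of inflations; completing it to an $\EE$-triangle $W_1\to W_0\oplus I\to X\dashrightarrow$ gives $X\in\G$. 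The trivial triangle $0\to I\to I\dashrightarrow$ places $I\in\Sigma\W$, so $\Hom_{\overline{\W}}(-,I)=0$ and the presentation of $F(X)$ reduces to that of $M$, yielding $F(X)\cong M$. The same observation gives $[\I]\subseteq[\Sigma\W]$ on $\W$, and for the reverse inclusion any morphism $W\xrightarrow{p}\Sigma W''\xrightarrow{q}W'$ factoring through $\Sigma\W$ has $p$ factoring through the injective hull $I''$ of $W''$ by $2$-rigidity $\EE(W,W'')=0$, so the whole composition factors through $\I$; thus $\overline{\W}=\underline{\W}$ and $\mod\overline{\W}\simeq\mod\underline{\W}$.
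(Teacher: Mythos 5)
Your proof is correct, and it is essentially a fully detailed version of what the paper's one-line argument presupposes. The paper simply asserts that $\mathbb F\colon M\mapsto\Hom_{\oB}(-,M)|_{\overline\W}$ is an equivalence ``since $\overline\G$ has enough projectives $\overline\W$,'' and separately notes $\overline\W=\underline\W$; it leaves well-definedness, fullness, faithfulness and essential surjectivity entirely implicit. You supply all four. The two steps that genuinely require work — and that the paper does not address at all — are faithfulness (your pushout identification $\zeta=a_*\mu$ from the $3\times3$ diagram of Lemma~\ref{p1}, which makes the obstruction $f'_*\zeta$ vanish once $f'a=0$, is exactly the right mechanism) and essential surjectivity (padding $c$ by an inflation $W_1\to I$ to land back in $\G$, then observing $\Hom_{\overline\W}(-,I)=0$). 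One small gloss worth making explicit: your use of $\EE(\Sigma W_0,\Sigma W')=0$ in the faithfulness step holds because $\Sigma W_0\in\G$ (it is $\Cone(W_0,I_0)$ with $W_0,I_0\in\W$) and Lemma~\ref{p1} gives $\G\subseteq{}^{\bot_1}\Sigma\W$; a reader might otherwise try a dimension shift through $\EE^i(I_0,-)$, which does not vanish. With that clarification the argument is airtight and matches the paper's intent.
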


\begin{proof}
Since $\overline \G$ has enough projectives $\overline \W$,
\begin{eqnarray*}
\mathbb{F}\colon \overline \G & \longrightarrow & \mod \overline \W\\
 M& \longmapsto & \Hom_{\oB}(-,M)|_{\overline \W}
\end{eqnarray*}
is an equivalence.

Since $\overline \W=\underline \W$, we have $\mod \overline \W\simeq \mod \underline \W$.
\end{proof}

By Proposition \ref{prop} and Lemma \ref{exact}, we have the following corollary.

\begin{cor}\label{p2}
If we have an $\EE$-triangle $X\xrightarrow{~x~} Y\xrightarrow{~y~} Z\dashrightarrow$ in $\mathcal G$, then $X\xrightarrow{~\overline x~} Y\xrightarrow{~\overline y~} Z\to 0$ is exact in $\overline {\mathcal G}$.
\end{cor}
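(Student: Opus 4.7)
The plan is to transfer the question to $\mod\overline{\W}$ via the equivalence established in Proposition \ref{prop} and then read off exactness from Lemma \ref{exact}. Since Proposition \ref{prop} shows that $\mathbb{F}=\Hom_{\oB}(-,?)|_{\overline\W}\colon\overline{\G}\to\mod\overline\W$ is an equivalence of categories, in particular $\overline{\G}$ is abelian, and exactness in $\overline{\G}$ can be tested by applying $\mathbb{F}$ and checking exactness in $\mod\overline\W$, which in turn is checked pointwise at every $W\in\overline\W$.

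First I would apply $\mathbb{F}$ to the sequence $X\xrightarrow{\overline{x}}Y\xrightarrow{\overline{y}}Z$, obtaining for each $W\in\overline\W$ the sequence
$$\Hom_{\oB}(W,X)\xrightarrow{\Hom_{\oB}(W,\overline x)}\Hom_{\oB}(W,Y)\xrightarrow{\Hom_{\oB}(W,\overline y)}\Hom_{\oB}(W,Z)\to 0.$$
Lemma \ref{exact} tells us directly that this is exact. Hence $\mathbb{F}(\overline x)$ and $\mathbb{F}(\overline y)$ form an exact sequence $\mathbb{F}(X)\to\mathbb{F}(Y)\to\mathbb{F}(Z)\to 0$ in the abelian category $\mod\overline\W$.

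Finally, since $\mathbb{F}$ is an equivalence, it reflects exact sequences: a sequence in $\overline{\G}$ is exact if and only if its image under $\mathbb{F}$ is exact in $\mod\overline\W$. Therefore $X\xrightarrow{\overline x}Y\xrightarrow{\overline y}Z\to 0$ is exact in $\overline{\G}$, as required.

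There is essentially no obstacle here: the corollary is a direct packaging of Lemma \ref{exact} together with the equivalence of Proposition \ref{prop}. The only mildly subtle point is to observe that $\mathbb{F}$ being an equivalence implies $\overline{\G}$ inherits an abelian structure from $\mod\overline\W$ so that the phrase ``exact in $\overline{\G}$'' is meaningful and coincides with pointwise exactness of the $\Hom_{\oB}(W,-)$-sequences; this is why Lemma \ref{exact} yields exactly what is needed.
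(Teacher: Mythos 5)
Your proposal is correct and matches the paper's approach exactly: the paper gives no separate argument but simply cites Proposition~\ref{prop} and Lemma~\ref{exact}, and your unpacking — applying the equivalence $\mathbb{F}=\Hom_{\oB}(-,?)|_{\overline\W}$, reading off pointwise exactness from Lemma~\ref{exact}, and using that an equivalence reflects exactness — is precisely the intended reasoning.
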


The following lemma is useful.

\begin{lem}\label{p3}
For any indecomposable objects $X,Y\in \G$, $\EE(X,Y)=0$ implies $\Ext^1_{\overline \G}(X,Y)=0$.
\end{lem}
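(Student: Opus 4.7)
My plan is to realize $\Ext^1_{\overline{\G}}(X,Y)$ as a subobject of a cokernel of Hom-groups arising from a chosen projective presentation of $X$ in $\overline{\G}$, and then to kill the ambient cokernel by means of the long exact sequence for $\EE$ in $\B$.

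First, since $X\in\G=\Cone(\W,\W)$, I would fix an $\EE$-triangle $W_1\xrightarrow{c}W_0\to X\dashrightarrow$ with $W_0,W_1\in\W$. By Corollary \ref{p2}, its image in $\overline{\G}$ is an exact sequence $W_1\xrightarrow{\overline c}W_0\to X\to 0$; since $\overline{\W}$ consists of the projectives of $\overline{\G}$ (by the lemma preceding Proposition \ref{prop}), this is a projective presentation of $X$.

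Next, applying $\Hom_{\B}(-,Y)$ to the $\EE$-triangle above gives the long exact sequence
$$\Hom_{\B}(W_0,Y)\xrightarrow{-\circ c}\Hom_{\B}(W_1,Y)\longrightarrow\EE(X,Y)=0,$$
so $-\circ c$ is surjective in $\B$. This surjectivity then descends to the quotient by $[\Sigma\W]$: any $\overline k\in\Hom_{\overline{\G}}(W_1,Y)$ lifts to some $k\colon W_1\to Y$ in $\B$, which equals $m\circ c$ for some $m\colon W_0\to Y$, and $\overline m$ is the desired preimage. Well-definedness of $-\circ c$ modulo $[\Sigma\W]$ is immediate, since if $m$ factors through an object of $\Sigma\W$ then so does $m\circ c$.

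To finish, I would work inside the abelian category $\overline{\G}$ and factor $\overline c$ as $W_1\twoheadrightarrow\Omega X\hookrightarrow W_0$, where $\Omega X=\Ker(W_0\to X)$; the first map is epic by exactness of the presentation at $W_0$. This produces an injection $\Hom_{\overline{\G}}(\Omega X,Y)\hookrightarrow\Hom_{\overline{\G}}(W_1,Y)$, and since $W_0$ is projective in $\overline{\G}$,
$$\Ext^1_{\overline{\G}}(X,Y)\ \cong\ \Coker\!\bigl(\Hom_{\overline{\G}}(W_0,Y)\to\Hom_{\overline{\G}}(\Omega X,Y)\bigr).$$
A short diagram chase then embeds this cokernel into $\Coker\!\bigl(\Hom_{\overline{\G}}(W_0,Y)\to\Hom_{\overline{\G}}(W_1,Y)\bigr)$, which vanishes by the previous step, so $\Ext^1_{\overline{\G}}(X,Y)=0$.

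The only mildly delicate point, and the place where I would triple-check the argument, is that a projective presentation (rather than a full projective resolution) suffices for the $\Ext^1$-computation; this is a standard fact in any abelian category with enough projectives. I note also that indecomposability of $X$ and $Y$ is never invoked in this strategy, so the conclusion should in fact hold without that hypothesis.
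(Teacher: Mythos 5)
Your argument is correct, and it takes a genuinely different route from the paper's. The paper proves the lemma at the level of Yoneda extensions: it starts from an arbitrary short exact sequence $0\to Y\to Z\to X\to 0$ in $\overline{\G}$, lifts the monomorphism $Y\to Z$ along an injective copresentation $Y\to\Sigma W_0$ of $Y$ to build an $\EE$-triangle $Y\to\Sigma W_0\oplus Z\to X'\dashrightarrow$ with $X'\cong X$ in $\overline{\G}$, and then uses $\EE(X,Y)=0$ to produce a splitting of the realized $\EE$-triangle, hence of the original short exact sequence. You instead work on the other side, with a projective presentation $W_1\to W_0\to X$ of $X$ in $\overline{\G}$ coming from the defining $\EE$-triangle of $X\in\Cone(\W,\W)$, and you compute $\Ext^1_{\overline{\G}}(X,Y)$ as the cokernel of $\Hom_{\overline{\G}}(W_0,Y)\to\Hom_{\overline{\G}}(\Omega X,Y)$, killing it by the surjectivity of $-\circ c$ forced by $\EE(X,Y)=0$. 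Both routes exploit the same ingredients (Corollary~\ref{p2}, that $\overline{\W}$ are the projectives of $\overline{\G}$, vanishing of $\EE(X,Y)$), but yours is a resolution-based computation while the paper's is a direct ``every extension splits'' argument via Wakamatsu-style lifting; the two are morally dual, one using enough projectives of $\G$ and the other enough injectives. Your observation that indecomposability of $X$ and $Y$ is not needed is also consistent with the paper: its proof does not use it either, and the lemma is subsequently applied to non-indecomposable subcategories. One caveat applies equally to both arguments: $\Ext^1_{\overline{\G}}$ only makes sense once $\overline{\G}$ carries an abelian (or at least exact) structure, which the paper only makes an explicit hypothesis in Theorem~\ref{main2}; you flag this correctly by saying you ``work inside the abelian category $\overline{\G}$,'' and it is worth stating that assumption up front.
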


\begin{proof}
Let
$$(\blacklozenge) \quad 0\to Y\xrightarrow{g} Z\xrightarrow{f} X\to 0 \text{ }$$
 be a short exact sequence in $\overline \G$. The morphism $g$ admits the following commutative diagram
$$\xymatrix{
Y \ar[r]^w \ar[d]_g &\Sigma W_0 \ar[r] \ar[d]^{v} &\Sigma W_1 \ar@{=}[d] \ar@{-->}[r] &\\
Z \ar[r]_{f'} &X' \ar[r] &\Sigma W_1\ar@{-->}[r] &
}
$$
where $W_0,W_1 \in \W$. Then we have an $\EE$-triangle $Y\xrightarrow{\svecv{w}{g}} \Sigma W_0\oplus Z\xrightarrow{\svech{-v}{f'}} X'\dashrightarrow$, which induces an exact sequence $(\lozenge) \quad 0 \to Y\xrightarrow{\overline g} Z\xrightarrow{\overline f'} X'\to 0$. It is isomorphic to $(\blacklozenge)$.  Hence $X'=X\oplus X_0$ where $X_0\in \Sigma\W$. Denote morphism $\Sigma W_0\oplus Z \xrightarrow{\svech{-v}{f'}} X'$ by $\Sigma W_0\oplus Z \xrightarrow{\left(\begin{smallmatrix}
-v_1& f_1\\
-v_2& f_2
\end{smallmatrix}\right)} X\oplus X_0$, then $\overline f_1=\overline f'$. Since $\EE(X,Y)=0$, we have a commutative diagram.
$$\xymatrix{
&&&X\ar[d]^-{\svecv{1}{0}} \ar@{.>}[dll]_-{\svecv{x_1}{x_2}} \\
Y\ar[r]_-{\svecv{w}{g}} &\Sigma W_0\oplus Z\ar[rr]_-{\left(\begin{smallmatrix}
-v_1& f_1\\
-v_2& f_2
\end{smallmatrix}\right)} &&X\oplus X_0 \ar@{-->}[r] &
}
$$
Then we have $\overline {f_1x_2}=\overline 1$ Hence  $(\lozenge)$ splits, so is $(\blacklozenge)$.
\end{proof}

For a subcategory $\overline \N\subseteq \overline{\mathcal G}$, denote  $\rm\bf P(\overline \N)$ by the subcategory $\{N\in  \overline \N \text{ }|\text{ } \Ext^1_{\overline{\mathcal G}}(N,\overline \N)=0 \};$ denote by $\Fac \overline \N$ by the subcategory $\{B\in  \overline \G \text{ }|\text{ there is an epimorphism } N\to B\to 0,\text{ where } N\in \N \}.$

\begin{lem}\label{p4}
Let $(\X,\Y)$ be a cotorsion pair in $\G$. Then $\overline \Y=\Fac \overline \Y=\Fac (\overline \X\cap\overline \Y)$.
\end{lem}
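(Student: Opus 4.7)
The plan is to establish the chain of inclusions
\[
\overline \Y \subseteq \Fac(\overline \X \cap \overline \Y) \subseteq \Fac \overline \Y \subseteq \overline \Y,
\]
which, together with the trivial $\overline \Y \subseteq \Fac \overline \Y$, yields both equalities in the statement.

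For $\overline \Y \subseteq \Fac(\overline \X \cap \overline \Y)$, given $Y \in \Y$, the cotorsion pair provides an $\EE$-triangle $Y' \to X' \to Y \dashrightarrow$ with $X' \in \X$ and $Y' \in \Y$. By Lemma~\ref{y1}, $\Y$ is closed under extensions, so $X' \in \X \cap \Y$, and Corollary~\ref{p2} promotes the deflation $X' \to Y$ to an epimorphism in $\overline \G$. The middle inclusion $\Fac(\overline \X \cap \overline \Y) \subseteq \Fac \overline \Y$ is immediate.

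For the main direction $\Fac \overline \Y \subseteq \overline \Y$, let $\overline f \colon Y \to B$ be epic in $\overline \G$ with $Y \in \Y$. Apply the cotorsion pair to $B$ to obtain an $\EE$-triangle $B \xrightarrow{c} Y^B \xrightarrow{d} X^B \dashrightarrow$ with $Y^B \in \Y$ and $X^B \in \X$. Using the enough-injectives $\Sigma \W$ of $\G$ from Lemma~\ref{p1}, pick an inflation $\iota \colon Y \to \Sigma W$ whose cokernel $\Sigma W'$ lies in $\Sigma \W$, and form the pushout of $\iota$ along $cf$. This yields two $\EE$-triangles: $Y^B \to E \to \Sigma W' \dashrightarrow$, forcing $E \in \Y$ by extension-closure, and a companion $Y \to Y^B \oplus \Sigma W \to E \dashrightarrow$. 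Passing the companion to $\overline \G$ via Corollary~\ref{p2} and using $\overline{\Sigma W} = 0$ identifies $\overline E$ with $\Coker(\overline{cf})$; the cotorsion $\EE$-triangle together with $\overline f$ epic gives the same description for $\overline{X^B}$, hence $\overline E \cong \overline{X^B}$ in $\overline \G$. Krull--Schmidt together with $\Sigma \W \subseteq \Y$ and closure of $\Y$ under summands then forces $X^B \in \X \cap \Y$.

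To conclude $B \in \overline \Y$, the strategy is to exhibit an isomorphism $\overline B \cong \overline{Y^B}$ in $\overline \G$. The universal property of the pushout (applied with $d \colon Y^B \to X^B$ and $0 \colon \Sigma W \to X^B$, using $dcf = 0$) yields a morphism $\phi \colon E \to X^B$ satisfying $\phi c' = d$, whose reduction $\overline \phi$ is the canonical cokernel isomorphism $\overline E \xrightarrow{\sim} \overline{X^B}$. Combining $\phi$ with the two $\EE$-triangles $B \to Y^B \to X^B$ and $Y^B \to E \to \Sigma W'$, I would argue that the former becomes split modulo $\Sigma \W$, so that $\overline c$ is an isomorphism and $B \in \overline \Y$ since $Y^B \in \Y$. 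The principal obstacle is this final step: no literal splitting exists in $\G$, so the splitting must be read off in the quotient $\overline \G$ by carefully tracking the pushout data and the connecting classes $\delta \in \EE(X^B, B)$ and $\gamma \in \EE(\Sigma W', Y^B)$ through $\phi$.
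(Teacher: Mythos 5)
Your chain of inclusions is a fine organizational device, and your argument for $\overline \Y \subseteq \Fac(\overline \X\cap\overline \Y)$ is correct and in fact cleaner than the paper's: the paper first lifts to a projective cover $W_0\to Y$ and runs the cotorsion-pair triangle over $W_0$, whereas you apply it directly to $Y$ itself, getting $Y'\to X'\to Y\dashrightarrow$ with $X'\in\X\cap\Y$ by extension-closure of $\Y$, and Corollary~\ref{p2} makes $\overline{X'}\to\overline{Y}$ epic.

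The problem is the main direction $\Fac\overline\Y\subseteq\overline\Y$. Your pushout machinery does successfully establish $X^B\in\X\cap\Y$, but the final step — reading off that $B\to Y^B\to X^B\dashrightarrow$ ``splits modulo $\Sigma\W$'' or that $\overline c$ is an isomorphism — is a genuine gap, and you acknowledge it yourself. Knowing $X^B\in\X\cap\Y$ is simply not strong enough: applying $\Hom_\B(X,-)$ for $X\in\X$ to the triangle gives $\Hom_\B(X,Y^B)\xrightarrow{d_*}\Hom_\B(X,X^B)\to\EE(X,B)\to\EE(X,Y^B)=0$, so $\EE(X,B)$ vanishes only if $d$ is a right $\X$-approximation in the relevant range, and nothing in your construction furnishes that. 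The connecting classes $\delta\in\EE(X^B,B)$ live precisely in the group you are trying to kill, so chasing them through $\phi$ cannot resolve the circularity.

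The fix is not to involve the cotorsion-pair triangle for $B$ at all. Push $f\colon Y\to B$ itself (not $cf$) along the injective copresentation $Y\to\Sigma W\to\Sigma W'\dashrightarrow$ of $Y$ in $\G$. The bicartesian square yields an $\EE$-triangle $Y\to B\oplus\Sigma W\to A\dashrightarrow$ together with $B\to A\to\Sigma W'\dashrightarrow$. Corollary~\ref{p2} gives the exact sequence $Y\xrightarrow{\overline f}B\to\overline A\to 0$ in $\overline\G$; since $\overline f$ is epic, $\overline A=0$, i.e.\ $A\in\Sigma\W\subseteq\Y$. Now in the triangle $Y\to B\oplus\Sigma W\to A\dashrightarrow$ both ends lie in $\Y$, so extension-closure of $\Y$ and closure under summands give $B\in\Y$. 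This is essentially the paper's argument (with $Z$ in place of $B$), and it succeeds exactly where your version stalls because the object whose image in $\overline\G$ vanishes is the one that plugs directly into an extension in $\Y$.
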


\begin{proof}
We first show that $\overline \Y\supseteq \Fac \overline \Y$.\\
Let $Z\in  \Fac \overline \Y$. Then $Z$ admits an epimorphism $\overline y\colon Y\to Z\to 0$ where $Y\in \Y$. Thus we can get the following commutative diagram
$$\xymatrix{
Y \ar[r] \ar[d]_y &\Sigma W^1 \ar[d] \ar[r] &\Sigma W^2 \ar@{-->}[r] \ar@{=}[d] &\\
Z \ar[r]^z &A \ar[r] &\Sigma W^2 \ar@{-->}[r] &
}
$$
where $W^1,W^2\in \W$ and $A\in \G$. It induces an $\EE$-triangle $Y\to Z\oplus \Sigma W^1\to  A\dashrightarrow$. By Corollary \ref{p2}, we get an exact sequence $Y\xrightarrow{\overline y} Z\xrightarrow{\overline z} A\to 0$ in $\overline \G$. Hence $\overline z=0$ and $A\in \Sigma \W$. Then we have $Z\in \Y$.
Now we show $\overline \Y\subseteq \Fac (\overline \X\cap\overline \Y)$.\\
Any object $Y\in \Y$ admits an epimorphism $W_0\xrightarrow{~\overline y~} Y\to 0$.
Since $W_0\in \mathcal G$ and $(\X,\Y)$ is a cotorsion pair in $\mathcal G$, there exists an $\EE$-triangle
 $$\xymatrix{
W_0\ar[r]^{y_0} &Y_0\ar[r] &X_0\ar@{-->}[r]&}
$$
where $Y_0\in \Y$ and $X_0\in \X$. Since $W_0\in \X$, we have $Y_0\in \X$. We have the following commutative diagram of $\EE$-triangles
$$\xymatrix{
W_0\ar[r]^{y_0} \ar@{=}[d] &Y_0\ar[r] \ar[d] &X_0\ar@{-->}[r] \ar[d] &\\
W_0 \ar[r] &I_0 \ar[r] &\Sigma W_0 \ar@{-->}[r] &
}
$$
where $I_0\in \mathcal I$. It induces an exact sequence $W_0\xrightarrow{\overline {y_0}} Y_0\xrightarrow{\overline {x_0}} X_0\to 0$. It also induces an $\EE$-triangle $Y_0\to X_0\oplus I_0 \to \Sigma W_0\dashrightarrow$
 which implies $X_0\in \Y$. Since $y_0$ is a left $\Y$-approximation of $W_0$, there is a morphism $a:Y_0\to Y$ such that $y=ay_0$. Moreover, $\overline a $ is an epimorphism. Hence $\overline \Y\subseteq  \Fac (\overline \X\cap\overline \Y)$. This shows that $\overline \Y=\Fac \overline \Y=\Fac (\overline \X\cap\overline \Y)$.
\end{proof}

According to this lemma, we have the following proposition.

\begin{prop}
Let $(\X,\Y)$ be a cotorsion pair on $\G$ and $Z$ be an object in $\G$. If $\EE(\X\cap\Y,Z)=0$ and $\EE(Z,\X\cap\Y)=0$, then $Z\in \X\cap \Y$. This means $\X\cap \Y$ is maximal rigid in $\G$.
\end{prop}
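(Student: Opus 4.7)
The plan is to show $Z\in\Y$ and $Z\in\X$ separately, using the two approximation $\EE$-triangles supplied by the cotorsion pair $(\X,\Y)$ in $\G$ and combining them with the preceding lemmas. For $Z\in\Y$ the key idea is to upgrade the $\X$-term of the right approximation to an object of $\X\cap\Y$ by means of Lemma~\ref{p4}; for $Z\in\X$ the idea is to use the octahedron on an arbitrary $V\in\Y$ and exploit the $2$-rigidity of $\W$ to obtain an $\EE^2$-vanishing.

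For the first step, take the $\EE$-triangle $Z\to Y^Z\to X^Z\dashrightarrow$ with $X^Z\in\X$, $Y^Z\in\Y$. By Corollary~\ref{p2} the map $\overline{Y^Z}\to\overline{X^Z}$ is an epimorphism in $\overline{\G}$. Lemma~\ref{p4} applied to $Y^Z$ produces $M\in\X\cap\Y$ together with an epimorphism $\overline a\colon\overline M\to\overline{Y^Z}$, so the composite $\overline M\to\overline{X^Z}$ is an epimorphism and $\overline{X^Z}\in\Fac(\overline\X\cap\overline\Y)=\overline\Y$. Since $\overline\G=\G/[\Sigma\W]$ is an additive quotient by $\Sigma\W\subseteq\Y$ and $\Y$ is closed under direct sums and summands, an isomorphism $\overline{X^Z}\cong\overline V$ with $V\in\Y$ forces $X^Z$ to be a direct summand of $V\oplus S$ for some $S\in\add(\Sigma\W)\subseteq\Y$, hence $X^Z\in\Y$. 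Thus $X^Z\in\X\cap\Y$, and the hypothesis $\EE(\X\cap\Y,Z)=0$ kills the class of the $\EE$-triangle, which therefore splits; consequently $Z$ is a direct summand of $Y^Z\in\Y$, so $Z\in\Y$.

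For the second step, fix $V\in\Y$ and aim to show $\EE(Z,V)=0$. By Lemma~\ref{p1}, $V\in\G$ admits an $\EE$-triangle $W_V^1\to W_V^0\to V\dashrightarrow$ with $W_V^i\in\W$, and the cotorsion pair applied to $W_V^0$ gives $W_V^0\to Y_0\to X_0\dashrightarrow$ with $Y_0\in\Y$, $X_0\in\X$. Extension closure of $\X$ together with $W_V^0,X_0\in\X$ upgrades $Y_0$ to $\X\cap\Y$. The octahedron axiom applied to the composable pair $W_V^1\to W_V^0\to Y_0$ yields $\EE$-triangles $W_V^1\to Y_0\to P\dashrightarrow$ and $V\to P\to X_0\dashrightarrow$; the latter splits because $\EE(X_0,V)=0$ by the cotorsion pair, so $P\cong V\oplus X_0$. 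Feeding $W_V^1\to Y_0\to V\oplus X_0\dashrightarrow$ into $\Hom(Z,-)$ produces the exact piece
\[
\EE(Z,Y_0)\longrightarrow\EE(Z,V)\oplus\EE(Z,X_0)\longrightarrow\EE^2(Z,W_V^1).
\]
The left term vanishes by $\EE(Z,\X\cap\Y)=0$ (as $Y_0\in\X\cap\Y$), and the right term vanishes by dimension-shifting along a $\W$-resolution $W_Z^1\to W_Z^0\to Z$ of $Z$ (which exists since $Z\in\G$): the long exact sequence of $\Hom(-,W_V^1)$ together with $\EE^j(\W,\W)=0$ for $j=1,2$ forces $\EE^2(Z,W_V^1)=0$. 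Hence $\EE(Z,V)=0$, and since $V\in\Y$ was arbitrary, $Z\in{}^{\bot_1}\Y=\X$.

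The main obstacle is the second step: the naive attempt via the other approximation triangle $Y_Z\to X_Z\to Z\dashrightarrow$ only shows $X_Z\in\X\cap\Y$ and gets stuck, because there is no evident dual to Lemma~\ref{p4} for $\X$. The remedy is to forget the approximation of $Z$ and work with an arbitrary $V\in\Y$: using the $\W$-resolution of $V$ and the cotorsion-pair triangle as inputs for the octahedron reduces the problem to the $\EE^2$-vanishing statement $\EE^2(Z,W)=0$ for $Z\in\G$, $W\in\W$, which is exactly where the $2$-rigidity hypothesis on $\W$ becomes essential.
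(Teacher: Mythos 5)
Your argument is correct, but it departs from the paper's proof in both halves, so it is worth comparing. For $Z\in\Y$, the paper starts from a $\W$-resolution $W_1\to W_0\xrightarrow{w_0}Z$, takes the $(\X\cap\Y)$-coresolution $W_0\to Y_0\to X_0$ from the proof of Lemma~\ref{p4}, lifts $w_0$ along $Y_0\to Z$ using $\EE(X_0,Z)=0$, and concludes $\overline Z\in\Fac(\overline\X\cap\overline\Y)=\overline\Y$, i.e.\ $Z\in\Y$ directly; you instead use the other approximation triangle $Z\to Y^Z\to X^Z$, promote $X^Z$ to $\X\cap\Y$ via Lemma~\ref{p4}, and split. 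For $Z\in\X$, the paper applies the cotorsion-pair triangle $Y_1\to X_1\to Z$, shows $X_1\in\X\cap\Y$ by extension closure and then $Y_1\in\X\cap\Y$ by a single dimension shift on this triangle (using $\EE^2(Z,\Y)=0$), so the triangle splits and $Z$ is a direct summand of $X_1\in\X\cap\Y$ in one stroke. Your version instead verifies $\EE(Z,V)=0$ for each $V\in\Y$ by combining a $\W$-resolution of $V$, the coresolution of $W_V^0$, and (ET4), reducing to the same vanishing $\EE^2(Z,\W)=0$; this is sound but invokes more machinery (the octahedron), and it proves $Z\in\X$ and $Z\in\Y$ as two independent memberships rather than exhibiting $Z$ as a summand of something already in $\X\cap\Y$. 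Both routes are valid, and your version does make the role of $2$-rigidity slightly more visible; the paper's is the more economical.
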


\begin{proof}
We can assume that $Z$ is indecomposable. Since $Z\in \G$, it admits an $\EE$-triangle $W_1\to W_0 \xrightarrow{w_0} Z\dashrightarrow$ where $W_1, W_0\in \W$. By the proof of Lemma \ref{p4}, $W_0$ admits an $\EE$-triangle $W_0\xrightarrow{y_0} Y_0\to X_0 \dashrightarrow$ where $Y_0,X_0\in \X\cap \Y$. Since $\EE(X_0,Z)=0$, there is a morphism $z:Y_0\to Z$ such that $zy_0=w_0$. Since $\overline w_0$ is an epimorphism in $\G$, $\overline z$ is also an epimorphism, which implies $Z\in \Fac(\overline \X\cap \overline \Y)=\overline \Y$ by Lemma \ref{p4}. Then $Z$ admits an $\EE$-triangle $Y_1\to X_1\to Z\dashrightarrow$ where $X_1\in \X\cap\Y$ and $Y_1\in \Y$. Since $\G=\Cone(\W,\W)$ has enough projectives $\W$ and enough injectives, the higher extensions will vanish in $\G$, hence we have an exact sequence $0=\EE(X_1,\Y)\to \EE(Y_1,\Y)\to 0$ which implies $Y_1\in \X\cap\Y$. Then we have $\EE(Z,Y)=0$, $Z$ becomes a direct summand of $X_1$. Hence $Z\in \X\cap\Y$.
\end{proof}

Based on \cite[Definition 1.3]{IJY}, we have the following definition.

\begin{defn}\label{tau}
Let $\overline \M$ be a subcategory in $\overline {\mathcal G}$.

\begin{itemize}
\item[\rm (i)] $\overline \M$ is said to be $\tau$-rigid if any object $M\in \overline \M$ admits an exact sequence $W_1\xrightarrow{\overline f} W_0\to M\to 0$ such that $ W_1,W_0\in \overline { \W}$ and $\Hom_{\oB}(\overline f, M')$ is a surjection for any $M'\in \overline \M$.

\item[\rm (ii)] $\overline \M$ is said to be support $\tau$-tilting if it is $\tau$-rigid and any projective object $W$ admits an exact sequence $W\xrightarrow{\overline m} M^0\to M^1\to 0$ such that $M^0,M^1\in \overline \M$ and $\overline m$ is a left $\overline \M$-approximation.

\end{itemize}
\end{defn}

By the proof of \cite[Lemma 5.2 and Proposition 5.3]{IJY}, we get the following important lemmas.

\begin{lem}\label{p5}
A subcategory $\overline \M\subseteq \overline \G$ is $\tau$-rigid if and only if $\Ext^1_{\overline \G}(\overline \M, \Fac \overline \M)=0$.
\end{lem}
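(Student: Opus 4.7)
The plan is to reduce both implications to the standard identification of $\Ext^1_{\overline \G}(M,-)$ as the cokernel of $\Hom_{\oB}(\overline f,-)$, where $W_1 \xrightarrow{\overline f} W_0 \to M \to 0$ is any projective presentation in the abelian category $\overline \G$. Since $\overline \W$ is a class of enough projectives in $\overline \G$ (established in the lemma preceding Proposition \ref{prop}), applying $\Hom_{\oB}(-,X)$ to such a presentation yields the exact sequence
$$\Hom_{\oB}(W_0,X) \xrightarrow{\Hom_{\oB}(\overline f,X)} \Hom_{\oB}(W_1,X) \longrightarrow \Ext^1_{\overline \G}(M,X) \longrightarrow 0$$
for every $X \in \overline \G$, because $W_0 \in \overline \W$ is projective. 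The content of the lemma is then exactly the assertion that surjectivity of $\Hom_{\oB}(\overline f,-)$ on all objects of $\overline \M$ is equivalent to surjectivity on all of $\Fac \overline \M$.

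For the forward implication, I would take $M \in \overline \M$ with the presentation guaranteed by $\tau$-rigidity and pick $N \in \Fac \overline \M$ together with an epimorphism $p \colon M' \twoheadrightarrow N$ with $M' \in \overline \M$. To show $\Ext^1_{\overline \G}(M,N)=0$, by the displayed exact sequence it suffices to show $\Hom_{\oB}(\overline f,N)$ is surjective. Given $h \colon W_1 \to N$, projectivity of $W_1$ in $\overline \G$ lifts $h$ through $p$ to a morphism $h' \colon W_1 \to M'$; then $\tau$-rigidity applied to $M' \in \overline \M$ factors $h' = g'\overline f$ through $W_0$; finally $pg'\overline f = ph' = h$ gives the required preimage.

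For the converse, given $\Ext^1_{\overline \G}(\overline \M, \Fac \overline \M) = 0$ and an arbitrary $M \in \overline \M$, I would pick any projective presentation $W_1 \xrightarrow{\overline f} W_0 \to M \to 0$ with $W_0, W_1 \in \overline \W$, which exists because $\overline \G$ has enough projectives $\overline \W$. For each $M' \in \overline \M$, the identity morphism realizes $M' \in \Fac \overline \M$, so the hypothesis gives $\Ext^1_{\overline \G}(M,M') = 0$, and the exact sequence above immediately forces $\Hom_{\oB}(\overline f,M')$ to be surjective. Thus this presentation witnesses $\tau$-rigidity of $M$.

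The argument is essentially routine once the projective-presentation description of $\Ext^1$ is in place; the only nontrivial step is the two-stage lifting in the forward direction (first along $p$ via projectivity of $W_1$, then along $\overline f$ via $\tau$-rigidity). The mild subtlety to keep in mind is that the $\tau$-rigid condition is phrased in terms of \emph{some} projective presentation while $\Ext^1$ vanishing is presentation-independent; but the cokernel description above shows that surjectivity of $\Hom_{\oB}(\overline f,X)$ depends only on $M$ and $X$, so the distinction is harmless.
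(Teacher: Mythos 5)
The paper does not supply its own proof of this lemma; it simply cites \cite[Lemma 5.2 and Proposition 5.3]{IJY}. So the only thing to assess is whether your argument is correct on its own terms, and the converse direction has a genuine gap.

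The issue is the claimed identification
$\Hom_{\oB}(W_0,X) \to \Hom_{\oB}(W_1,X) \to \Ext^1_{\overline \G}(M,X) \to 0$ being exact for an \emph{arbitrary} projective presentation $W_1 \xrightarrow{\overline f} W_0 \to M \to 0$. Write $K=\Ker(W_0\to M)$, so that $\overline f$ factors as $W_1\xrightarrow{g}K\xrightarrow{i}W_0$ with $g$ epi. Applying $\Hom_{\oB}(-,X)$ to the short exact sequence $0\to K\to W_0\to M\to 0$ and using projectivity of $W_0$ gives $\Ext^1_{\overline\G}(M,X)=\Coker\Hom_{\oB}(i,X)$. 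Since $\Hom_{\oB}(g,X)$ is injective (as $g$ is epi), $\Ext^1_{\overline\G}(M,X)$ embeds as a \emph{subgroup} of $\Coker\Hom_{\oB}(\overline f,X)$, and the quotient is $\Coker\Hom_{\oB}(g,X)$, which need not vanish when $\Ker\overline f\neq 0$. So the two groups agree only when the presentation is a short exact sequence $0\to W_1\to W_0\to M\to 0$, which is not part of the definition of $\tau$-rigidity and is not automatic in $\overline\G$.

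This asymmetry is exactly why one direction survives and the other does not. Your forward argument (lift along $p$ by projectivity of $W_1$, then factor through $\overline f$ by $\tau$-rigidity) correctly shows $\Hom_{\oB}(\overline f,N)$ is surjective; since $\Ext^1_{\overline\G}(M,N)$ embeds into $\Coker\Hom_{\oB}(\overline f,N)$, the conclusion $\Ext^1_{\overline\G}(M,N)=0$ does follow, even though the intermediate "exact sequence" you invoke is not literally exact. For the converse, however, $\Ext^1_{\overline\G}(M,M')=0$ only gives that $\Hom_{\oB}(i,M')$ is surjective; it says nothing about $\Hom_{\oB}(g,M')$, i.e.\ it does not guarantee that every $h\colon W_1\to M'$ kills $\Ker\overline f$. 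Hence you cannot conclude that $\Hom_{\oB}(\overline f,M')$ is surjective for an arbitrarily chosen presentation, and the $\tau$-rigidity of $\overline\M$ does not follow by this argument. Closing the gap requires either producing a presentation with $\Ker\overline f=0$ (which need not exist unless the relevant syzygy is projective) or a more delicate argument using the full hypothesis $\Ext^1_{\overline\G}(\overline\M,\Fac\overline\M)=0$, as in \cite{IJY}.
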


\begin{lem}\label{p6}
If $\overline \M\subseteq \overline \G$ is $\tau$-rigid, then $\Fac \overline \M$ is closed under extensions.
\end{lem}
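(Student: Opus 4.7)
The plan is to adapt the standard $\tau$-tilting module argument from \cite{IJY} to the abelian category $\overline{\mathcal G}$, exploiting two facts: by the earlier lemma $\overline{\mathcal W}$ provides enough projectives in $\overline{\mathcal G}$, and the Hom-surjectivity clause in Definition \ref{tau} lets us factor morphisms landing in $\overline{\mathcal M}$ through the fixed presentation of each object of $\overline{\mathcal M}$.

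Let $0 \to A \xrightarrow{\iota} B \to C \to 0$ be a short exact sequence in $\overline{\mathcal G}$ with $A, C \in \Fac \overline{\mathcal M}$, and fix epimorphisms $\pi_A : M_A \to A$ and $\pi_C : M_C \to C$ with $M_A, M_C \in \overline{\mathcal M}$. By $\tau$-rigidity of $\overline{\mathcal M}$, choose a presentation $W_1 \xrightarrow{\overline f} W_0 \to M_C \to 0$ with $W_i \in \overline{\mathcal W}$ and $\Hom_{\oB}(\overline f, M')$ surjective for every $M' \in \overline{\mathcal M}$. Projectivity of $W_0$ in $\overline{\mathcal G}$ lifts $W_0 \to M_C \xrightarrow{\pi_C} C$ through $B \to C$ to a map $g : W_0 \to B$. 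Since $\overline f$ composes to zero into $M_C$, the composite $W_1 \xrightarrow{\overline f} W_0 \xrightarrow{g} B \to C$ vanishes, so $g\overline f = \iota h$ for a unique $h : W_1 \to A$.

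Now lift $h$ through $\pi_A$ to $\tilde h : W_1 \to M_A$, which is possible because $W_1$ is projective and $\pi_A$ is an epimorphism. By the Hom-surjectivity clause applied to $M' = M_A$, write $\tilde h = k\overline f$ for some $k : W_0 \to M_A$. Then $(g - \iota \pi_A k)\overline f = g\overline f - \iota \pi_A \tilde h = g\overline f - \iota h = 0$, so $g - \iota \pi_A k$ descends along the cokernel map $W_0 \to M_C$ to a morphism $\tilde g : M_C \to B$. The combined map $[\tilde g,\, \iota \pi_A] : M_C \oplus M_A \to B$ composes with $B \to C$ to $(\pi_C, 0)$, which is surjective onto $C$; conversely any element of $B$ mapping to $0$ in $C$ lies in $\iota(A) = \iota \pi_A(M_A)$. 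A short diagram chase then shows $[\tilde g, \iota \pi_A]$ is itself epi, and since $\overline{\mathcal M}$ is closed under finite direct sums we conclude $B \in \Fac \overline{\mathcal M}$.

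The main obstacle is purely bookkeeping: one has to keep separate the two roles that $\tau$-rigidity plays, namely projectivity of the $W_i$ (which produces the lifts $g$ and $\tilde h$) and the Hom-surjectivity statement (which rewrites $\tilde h$ as $k\overline f$ and thereby manufactures the cokernel-factored adjustment $\tilde g : M_C \to B$). Once the map $\tilde g$ is in hand, closure under extensions reduces to the snake-type chase in the abelian category $\overline{\mathcal G}$ described above.
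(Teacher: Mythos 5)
Your argument is correct. The paper itself gives no proof here, citing only \cite[Lemma 5.2 and Proposition 5.3]{IJY}; the IJY route first reformulates $\tau$-rigidity as the vanishing $\Ext^1_{\overline\G}(\overline\M,\Fac\overline\M)=0$ (the content of Lemma \ref{p5}) and then argues: given $0\to A\to B\to C\to 0$ with $A,C\in\Fac\overline\M$, pull back along an epi $M_C\twoheadrightarrow C$ with $M_C\in\overline\M$ to get $0\to A\to B'\to M_C\to 0$, observe that it splits since $\Ext^1(M_C,A)=0$, and then push an epi $M_A\oplus M_C\twoheadrightarrow A\oplus M_C\cong B'\twoheadrightarrow B$. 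Your proof bypasses Lemma \ref{p5} entirely and builds the lift $\tilde g\colon M_C\to B$ of $\pi_C$ by hand, directly from the two ingredients hidden in Definition \ref{tau}(i): projectivity of $W_0,W_1\in\overline\W$ to produce $g$ and $\tilde h$, and $\Hom_{\oB}(\overline f,M_A)$-surjectivity to rewrite $\tilde h=k\overline f$ and push $g-\iota\pi_A k$ through the cokernel. Your $\tilde g$ is exactly the section that the pullback splitting produces, so the two proofs are the same at the level of content; yours is more self-contained (it never needs Lemma \ref{p5}, and in fact contains in miniature the only case of the Ext-vanishing that the lemma requires), at the cost of heavier bookkeeping. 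One small point worth making explicit in the write-up: the final step needs $M_C\oplus M_A\in\overline\M$, i.e.\ that $\overline\M$ is closed under finite direct sums; this is tacit in the paper's convention on subcategories, but it is doing real work in the last line.
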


We have the following corollary.

\begin{cor}\label{impor}
Let $(\X,\Y)$ be a cotorsion pair in $\G$. Then $\overline \X\cap \overline \Y$ is support $\tau$-tilting. Moreover, $\overline \X\cap \overline \Y=\rm\bf P(\overline \Y)$.
\end{cor}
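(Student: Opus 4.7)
The plan is to verify the two defining conditions of support $\tau$-tilting from Definition \ref{tau} for $\overline \X\cap\overline \Y$, and then separately establish the equality with $\rm\bf P(\overline \Y)$.

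For $\tau$-rigidity, I would apply Lemma \ref{p5}, which reduces the claim to $\Ext^1_{\overline \G}(\overline \X\cap\overline \Y,\Fac(\overline \X\cap\overline \Y))=0$. Lemma \ref{p4} identifies $\Fac(\overline \X\cap\overline \Y)$ with $\overline \Y$, so it suffices to check $\Ext^1_{\overline \G}(M,Y)=0$ for $M\in\X\cap\Y$ and $Y\in\Y$. This follows by decomposing into indecomposable summands and invoking Lemma \ref{p3} together with the cotorsion pair vanishing $\EE(\X,\Y)=0$.

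For the support condition, Proposition \ref{prop} tells me the projectives of $\overline \G$ are precisely $\overline \W$, so I need only treat $W\in \W$. The proof of Lemma \ref{p4} produces an $\EE$-triangle $W\xrightarrow{y_0}Y_0\to X_0\dashrightarrow$ with $Y_0,X_0\in\X\cap\Y$ and $y_0$ a left $\Y$-approximation; Corollary \ref{p2} turns it into an exact sequence $W\xrightarrow{\overline{y_0}}Y_0\to X_0\to 0$ in $\overline \G$. That $\overline{y_0}$ is a left $\overline{\X\cap\Y}$-approximation is immediate: any $\overline f\colon W\to M$ with $M\in\X\cap\Y\subseteq\Y$ lifts to $f\colon W\to M$ in $\G$, which factors as $gy_0$, so $\overline f=\overline g\,\overline{y_0}$.

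For the equality $\overline \X\cap\overline \Y=\rm\bf P(\overline \Y)$, the inclusion $\subseteq$ is immediate from the $\tau$-rigidity above together with $\overline \X\cap\overline \Y\subseteq \overline \Y$. For $\supseteq$, take $N\in\rm\bf P(\overline \Y)$ and assume $N\in \Y$. The cotorsion pair yields an $\EE$-triangle $V_N\to U_N\to N\dashrightarrow$ with $V_N\in\Y$ and $U_N\in\X$; since $\Y$ is extension closed, $U_N\in\X\cap\Y$. Corollary \ref{p2} converts this into an exact sequence $V_N\to U_N\to N\to 0$ in $\overline \G$, whose kernel $K:=\ker(U_N\to N)$ is an epimorphic image of $V_N$, hence lies in $\Fac(\overline \Y)=\overline \Y$ by Lemma \ref{p4}. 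The hypothesis $\Ext^1_{\overline \G}(N,\overline \Y)=0$ then splits the short exact sequence $0\to K\to U_N\to N\to 0$, so $N$ is a direct summand of $U_N$ in $\overline \G$; a Krull-Schmidt argument in $\overline \G\simeq \mod \overline \W$ combined with closure of $\X\cap\Y$ under summands then places $N$ in $\overline \X\cap\overline \Y$. I expect the delicate step to be this final splitting, specifically the identification of $K$ as an object of $\overline \Y$ so that the $\Ext$-vanishing hypothesis can actually be applied.
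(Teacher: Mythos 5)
Your argument for $\tau$-rigidity and for the support condition follows the paper's own proof exactly: both reduce $\tau$-rigidity to $\Ext^1_{\overline{\G}}(\overline{\X}\cap\overline{\Y},\Fac(\overline{\X}\cap\overline{\Y}))=0$ via Lemma~\ref{p5}, identify $\Fac(\overline{\X}\cap\overline{\Y})$ with $\overline{\Y}$ via Lemma~\ref{p4}, kill the $\Ext^1$-group by Lemma~\ref{p3}, and both reuse the $\EE$-triangle $W_0\to Y_0\to X_0\dashrightarrow$ constructed in the proof of Lemma~\ref{p4} for the support condition. Where you genuinely diverge is in the equality $\overline{\X}\cap\overline{\Y}=\mathbf{P}(\overline{\Y})$. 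The paper disposes of it in one line by quoting (implicitly, through the citation of \cite{IJY}) the fact that a support $\tau$-tilting subcategory $\overline{\M}$ always satisfies $\overline{\M}=\mathbf{P}(\Fac\overline{\M})$, and then substituting $\Fac(\overline{\X}\cap\overline{\Y})=\overline{\Y}$. You instead give a self-contained argument for the inclusion $\mathbf{P}(\overline{\Y})\subseteq\overline{\X}\cap\overline{\Y}$: take $N\in\mathbf{P}(\overline{\Y})$, resolve it by the cotorsion pair as $V_N\to U_N\to N\dashrightarrow$ with $V_N\in\Y$, $U_N\in\X\cap\Y$, pass to $\overline{\G}$ by Corollary~\ref{p2}, observe the kernel $K$ lies in $\Fac\overline{\Y}=\overline{\Y}$, and split by the hypothesis $\Ext^1_{\overline{\G}}(N,\overline{\Y})=0$. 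This is correct, and the step you flag as delicate is indeed the one that needs care: after splitting, $N$ is only a direct summand of $U_N$ inside the quotient $\overline{\G}$, so concluding $N\in\overline{\X}\cap\overline{\Y}$ requires that the ideal quotient preserves the Krull--Schmidt decomposition (indecomposables of $\G$ become indecomposable or zero in $\overline{\G}$), that $\overline{\G}\simeq\mod\overline{\W}$ is itself Krull--Schmidt, and that $\X\cap\Y$ is closed under summands in $\G$; all of these hold in the paper's setting, so the argument goes through. Net effect: your route avoids leaning on the external IJY correspondence for this one equality, at the price of an extra Krull--Schmidt lifting argument, while the paper's route is shorter but less self-contained.
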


\begin{proof}
By Lemma \ref{p4} and \ref{p3}, we have $\Ext^1_{\overline \G}(\overline \X\cap \overline \Y, \Fac (\overline \X\cap \overline \Y))=\Ext^1_{\overline \G}(\overline \X\cap \overline \Y, \overline \Y)=0$. Hence $\overline \X\cap \overline \Y$ is $\tau$-rigid. According to the proof of Lemma \ref{impor}. any projective object $W_0$ in $\G$ admits an exact sequence $W_0\xrightarrow{\overline {y_0}} Y_0\xrightarrow{\overline {x_0}} X_0\to 0$ where $X_0,Y_0\in \X\cap \Y$ and $y_0$ is a left $\Y$-approximation. By Definition \ref{tau}, it is support $\tau$-tilting. We also have $\overline \X\cap \overline \Y=\rm\bf P(\Fac (\overline \X\cap \overline \Y))=\rm\bf P(\overline \Y).$
\end{proof}

Now we can state and show the main theorem of this section.

\begin{thm}\label{main2}
Let $\W$ be a $2$-rigid subcategory such that $\mathcal I\subsetneq \W$ and $\mathcal G=\Cone(\W,\W)$ be the proper $2$-term subcategory induced by $\W$. Assume that $\overline{\mathcal G}$ is abelian. Then we have a one-to-one correspondence $$\Phi\colon (\X,\Y) \mapsto \overline \X\cap\overline \Y$$ from the first of the following sets to the second.
\begin{itemize}
\item[\rm (i)] Cotorsion pairs $(\X,\Y)$ in $\mathcal G$.
\smallskip

\item[\rm (ii)] Support $\tau$-tilting subcategories $\overline \M$ in $\overline {\mathcal G}$.
\end{itemize}
\end{thm}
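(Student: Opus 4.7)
The plan is to prove that $\Phi$ is well-defined, injective, and surjective. Well-definedness is already contained in Corollary~\ref{impor}: for every cotorsion pair $(\X,\Y)$ on $\G$, the intersection $\overline{\X}\cap\overline{\Y}$ is support $\tau$-tilting in $\overline{\G}$, and moreover equals $\mathbf{P}(\overline{\Y})$.

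For injectivity, I would show that both components of the cotorsion pair are recovered from $\overline{\M} := \overline{\X}\cap\overline{\Y}$. Lemma~\ref{p4} yields $\overline{\Y} = \Fac\overline{\M}$. Since $\Sigma\W$ consists of the injective objects of $\G$ by Lemma~\ref{p1}, Lemma~\ref{y1}(d) applied inside the extriangulated category $\G$ forces $\Sigma\W \subseteq \Y$; combined with the closure of $\Y$ under direct summands, this means $\Y$ consists exactly of those $Y \in \G$ whose image in $\overline{\G}$ lies in $\overline{\Y}$. Hence $\Y$ is determined by $\overline{\M}$, and then $\X = {^{\bot_1}}\Y \cap \G$ by Lemma~\ref{y1}(b).

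For surjectivity, given a support $\tau$-tilting subcategory $\overline{\M}$ of $\overline{\G}$, I set
\begin{equation*}
\Y := \{Y \in \G \mid \overline{Y} \in \Fac\overline{\M}\}, \qquad \X := {^{\bot_1}}\Y \cap \G,
\end{equation*}
and aim to apply Corollary~\ref{c1} inside $\G$, which is extriangulated with enough injectives $\Sigma\W$ by Lemma~\ref{p1}. The routine checks are: $\Y$ is closed under direct summands; $\Sigma\W \subseteq \Y$ (since $\overline{\Sigma W} = 0$); and $\Y$ is extension-closed in $\G$ (combining Lemma~\ref{p6} on the extension-closedness of $\Fac\overline{\M}$ with Corollary~\ref{p2}, which transfers $\EE$-triangles in $\G$ to short exact sequences in $\overline{\G}$). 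Granted the approximation triangle below, the identity $\Phi(\X,\Y) = \overline{\M}$ then follows from $\mathbf{P}(\overline{\Y}) = \mathbf{P}(\Fac\overline{\M}) = \overline{\M}$, where the last equality is the standard characterisation of a support $\tau$-tilting subcategory as the $\Ext$-projectives in the torsion class it generates (derivable from Definition~\ref{tau} and Lemma~\ref{p5}).

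The main obstacle is the construction, for each $B \in \G$, of an $\EE$-triangle $B \to V^B \to U^B \dashrightarrow$ in $\G$ with $V^B \in \Y$ and $U^B \in \X$. My strategy is as follows: take a projective presentation $W_1 \to W_0 \to B \dashrightarrow$ in $\G$; apply Definition~\ref{tau}(ii) to $W_0$ to obtain a left $\overline{\M}$-approximation $\overline{W_0} \xrightarrow{\overline{m}} \overline{M^0}$ with cokernel in $\overline{\M}$; lift $\overline{m}$ to a morphism $W_0 \to M^0$ in $\G$; enlarge the target by a suitable object of $\Sigma\W$ so that, using (WIC), the combined morphism $W_0 \to M^0 \oplus \Sigma W$ becomes an inflation in $\G$ with cofiber in $\G$; and push this inflation out along $W_0 \to B$ to produce the required $\EE$-triangle, whose middle term $V^B$ is checked to lie in $\Y$ by computing $\overline{V^B} \in \Fac\overline{\M}$ directly from the pushout square. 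Passing to a minimal left $\Y$-approximation of $B$ (which exists because the functorial finiteness of $\Fac\overline{\M}$ is furnished by the support $\tau$-tilting hypothesis) and invoking Wakamatsu's Lemma~\ref{Wa} then gives $\EE(U^B,\Y) = 0$, whence $U^B \in \X$. The delicate point throughout is to keep every term of the construction inside $\G$, since $\G$ is not closed under arbitrary cones in $\B$; it is precisely the explicit use of the $\W$-projective presentation of $B$ and of the approximation sequence from Definition~\ref{tau}(ii) that forces the output to remain in the proper $2$-term subcategory.
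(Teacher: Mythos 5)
Your overall strategy mirrors the paper's: use Corollary~\ref{impor} for well-definedness, Lemma~\ref{p4} to recover $\Y$ (and hence $\X$) from $\overline{\M}$ for injectivity, and for surjectivity set $\Y$ to be the preimage of $\Fac\overline{\M}$, show it is extension-closed via Corollary~\ref{p2} and Lemma~\ref{p6}, build an approximation triangle from a left $\overline{\M}$-approximation of $W_0$, minimize, and finish with Wakamatsu's Lemma~\ref{Wa} and Corollary~\ref{c1}. Your reorganization (one minimal $\Y$-approximation of $B$ rather than the paper's minimization at $W_0$ followed by a second one at $Z$ after the pushout along the injective copresentation $W_0\to Z\oplus I_1\to\Sigma W_1$) is a reasonable simplification, and your more explicit unwinding of injectivity is welcome.

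There is, however, one genuine gap in the crucial construction. You assert that after enlarging the target, \emph{``using (WIC), the combined morphism $W_0\to M^0\oplus\Sigma W$ becomes an inflation in $\G$ with cofiber in $\G$.''} (WIC) only yields that the map is an inflation in $\B$; it says nothing about the cofiber lying in $\G$, and indeed $\G=\Cone(\W,\W)$ is not closed under taking cones of inflations out of $\W$ into $\G$, so this is not automatic. The paper's way around this is the key trick: choose the enlargement to be the injective $I_0$ from the copresentation $W_0\to I_0\to\Sigma W_0\dashrightarrow$, push this $\EE$-triangle out along $m\colon W_0\to M^0$, and observe that the cofiber $X_1$ of $W_0\xrightarrow{\svecv{m}{i_0}}M^0\oplus I_0$ then also sits in the bottom row $M^0\to X_1\to\Sigma W_0\dashrightarrow$ of the pushout square. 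Since $M^0$ and $\Sigma W_0$ both lie in $\Y$ and $\Y$ has already been shown extension-closed, it follows that $X_1\in\Y\subseteq\G$. Without this diagram the cofiber could a priori escape $\G$, and your sentence attributing ``cofiber in $\G$'' to (WIC) is incorrect as stated. A secondary point left implicit in your sketch is that the pushed-out map $B\to V^B$ is a left $\Y$-approximation; this needs the projectivity of $\overline{W_0}$ in $\overline{\G}$ together with the left $\overline{\M}$-approximation property of $\overline{m}$ and the fact that maps $W_0\to\Sigma W'$ factor through $i_0\colon W_0\to I_0$ (since $\W$ is $2$-rigid), exactly as the paper checks for $\alpha$.
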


\begin{proof}

By Corollary \ref{impor}, we have that $\Phi$ is injective.
\smallskip

Now we show that $\Phi$ is surjective. Let $\overline \M$ be a support $\tau$-tilting subcategory in $\overline \G$. Let $\overline \N=\Fac \overline \M$. We assume $\N\cap \Sigma \W=0$, let $\Y=\add(\N\cup \Sigma \W)$. We show $(\X={^{\bot_1}}\Y\cap \G,\Y)$ is a cotorsion pair in $\G$. Then $ \overline \X\cap\overline \Y=\mathbf{P} (\overline \Y)=\mathbf{P} (\Fac \overline \M)=\overline \M$. 
\smallskip


Let $Y_0\xrightarrow{ y_0} Y_1\xrightarrow{ y_1} Y_2\dashrightarrow$ be an $\EE$-triangle where $Y_0,Y_2\in \Y$. By Lemma \ref{p2}, we have an exact sequence $Y_0\xrightarrow{\overline {y_0}} Y_1\xrightarrow{\overline  {y_1}} Y_2\to 0$ in $\overline \G$. It induces a short exact sequence $0\to \Im \overline {y_0} \to Y_1\to Y_2\to 0$. Since $\Im \overline {y_0}, Y_2\in \overline \N$, by Lemma \ref{p6}, we have $Y_1\in  \overline \N$. Hence $Y_1\in \Y$, $\Y$ is extension closed.
\smallskip


Any object $Z\in \G$ admits an $\EE$-triangle $W_0\to Z\oplus I_1 \to \Sigma W_1\dashrightarrow$ where $W_0,W_1\in \W$, $I_1\in \mathcal I$. Since $\overline \M$ is support $\tau$-tilting, we have left $\overline \M$-approximation $\overline m_1:W_0\to M_1$. Then we can get the following commutative diagram
$$\xymatrix{
W_0 \ar[r]^{i_1} \ar[d]_{m_1} &I_0 \ar[r] \ar[d] &\Sigma W_0 \ar@{=}[d] \ar@{-->}[r] &\\
M_1 \ar[r] &X_1 \ar[r] &\Sigma W_0\ar@{-->}[r] &
}
$$
where $I_0\in \mathcal I$. Since $M_1,\Sigma W_0\in \Y$, we have $X_1\in \Y\subseteq \G$. It is not hard to check that in the $\EE$-triangle
$$W_0 \xrightarrow{\svecv{m_1}{i_1}=\alpha} M_1\oplus I_0 \rightarrow X_1\dashrightarrow,$$ $\alpha$ is a left $\Y$-approximation. Hence $W_0$ admits an $\EE$-triangle
$W_0 \xrightarrow{w_0} Y_0\to X_0\dashrightarrow$ where $w_0$ is a minimal left $\Y$-approximation. Then by Lemma \ref{Wa}, $X_0\in {^{\bot_1}}\Y$. Moreover, $X_0$ is a direct summand of $X_1$, by Lemma \ref{p0}, $X_0\in \G$. Hence $X_0\in \X$. Now we have the following commutative diagram
$$\xymatrix{
W_0 \ar[r]^-{\alpha} \ar[d]_{w_0} &Z_1\oplus I_0 \ar[r] \ar[d]^-{\svech{a}{b}} &\Sigma W_1 \ar@{=}[d] \ar@{-->}[r] &\\
Y_0 \ar[r] \ar[d] &Y_1 \ar[r] \ar[d] &\Sigma W_1  \ar@{-->}[r] &\\
X_0 \ar@{-->}[d] \ar@{=}[r] &X_0 \ar@{-->}[d]\\
&&}
$$
where $Y_1\in \Y$. Since $\svech{a}{b}$ is a left $\Y$-approximation, $Z_1$ admits an $\EE$-triangle $Z_1\xrightarrow{a} Y_1\to X_0'\dashrightarrow$ where $a$ is a left $\Y$-approximation. Hence we can get an $\EE$-triangle $Z_1\xrightarrow{a'} Y_1'\to X_0''\dashrightarrow$ where $a'$ is a minimal left $\Y$-approximation. Then $X_0''$ becomes a direct summand of $X_0$, hence $X_0''\in \X$.
By Corollary \ref{c1}, we obtain that $(\X,\Y)$ is a cotorsion pair in $\G$.
\end{proof}








\section{Hereditary cotorsion pairs}

\begin{defn}\label{def2}
Let $\B'$ and $\B''$ be two subcategories in $\B$.
\begin{itemize}
\item[(a)] Denote by $\CoCone(\B',\B'')$ the subcategory
$$\{X\in \B \text{ }|\text{ } \textrm{there exists an}~ \text{ } \EE\text{-triangle } \xymatrix@C=0.5cm@R0.5cm{ X \ar[r] &B' \ar[r] &B'' \ar@{-->}[r] &} \text{, }B'\in \B' \text{ and }B''\in \B'' \}.$$

\item[(b)] Let $\Omega \B'=\CoCone(\mathcal P,\B')$. We write an object $D$ in the form $\Omega B$ if it admits an $\EE$-triangle $\xymatrix@C=0.5cm@R0.5cm{D \ar[r] &P \ar[r] &B \ar@{-->}[r] &}$ where $P\in \mathcal P$.
\smallskip

\end{itemize}
\end{defn}

\begin{defn}\label{hered}
A cotorsion pair is called hereditary if $\Omega \U\subseteq \U$.
\end{defn}

The following lemma gives several equivalent conditions of a cotorsion pair being hereditary.

\begin{lem}\label{y2}
Let $(\U,\V)$ be a cotorsion pair in $\B$. Then the following statements are equivalent:
\begin{itemize}
\item[\rm (I)] $\Omega \U\subseteq \U$;
\item[\rm (II)] $\Sigma \V\subseteq \V$;
\item[\rm (III)] $\EE^2(\U,\V)=0$;
\item[\rm (IV)] $\EE^i(\U,\V)=0$, $i\geq 1$.
\end{itemize}
\end{lem}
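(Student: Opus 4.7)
The plan is to cycle the four conditions using dimension shifting for the higher extension bifunctor $\EE^i$, which is available because $\B$ has enough projectives and enough injectives (see \cite[Section 5]{LN}). The only external input I will use is the standard syzygy/cosyzygy shift: for any $\EE$-triangle $\Omega U\to P\to U\dashrightarrow$ with $P\in\mathcal P$ one has $\EE^{i+1}(U,X)\cong\EE^i(\Omega U,X)$ for every $X$ and every $i\geq 1$; dually, for any $\EE$-triangle $V\to I\to\Sigma V\dashrightarrow$ with $I\in\mathcal I$ one has $\EE^{i+1}(X,V)\cong\EE^i(X,\Sigma V)$.

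The implication (IV)$\Rightarrow$(III) is immediate. For (III)$\Rightarrow$(I), I would fix $U\in\U$ and any syzygy triangle $\Omega U\to P\to U\dashrightarrow$; then for every $V\in\V$ the one-step shift gives $\EE(\Omega U,V)\cong\EE^2(U,V)=0$, so Lemma \ref{y1}(b) forces $\Omega U\in{^{\bot_1}}\V=\U$. The implication (III)$\Rightarrow$(II) is the exact dual: pick a cosyzygy triangle for $V\in\V$ and apply Lemma \ref{y1}(a) to conclude $\Sigma V\in\U^{\bot_1}=\V$.

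For (I)$\Rightarrow$(IV) I induct on $i$. The base case $i=1$ is the cotorsion pair axiom $\EE(\U,\V)=0$. For the inductive step, given $U\in\U$, (I) produces $\Omega U\in\U$, so the inductive hypothesis gives $\EE^i(\Omega U,V)=0$, and the shift yields $\EE^{i+1}(U,V)\cong\EE^i(\Omega U,V)=0$. The implication (II)$\Rightarrow$(IV) is dual, iterating $\Sigma V\in\V$ and shifting via $\EE^{i+1}(U,V)\cong\EE^i(U,\Sigma V)$ down to the cotorsion pair axiom.

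The only point that requires any care is verifying that the dimension-shift isomorphisms are available with the correct variances in the extriangulated setting, and that different choices of $\Omega U$ (which differ by projective summands) give the same conclusion; the latter is automatic because $\U$ is closed under summands and contains $\mathcal P$ by Lemma \ref{y1}(d). Beyond this bookkeeping I do not anticipate any essential obstacle: once the shift is imported from \cite{LN}, each of the four implications collapses to a single line combined with Lemma \ref{y1}.
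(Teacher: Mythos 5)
Your proof is correct and uses the same key tools as the paper: the syzygy/cosyzygy dimension-shift isomorphisms from \cite{LN} and the orthogonality characterization $\U={^{\bot_1}}\V$, $\V=\U^{\bot_1}$ of Lemma \ref{y1}. The chosen set of implications (IV)$\Rightarrow$(III), (III)$\Rightarrow$(I), (III)$\Rightarrow$(II), (I)$\Rightarrow$(IV), (II)$\Rightarrow$(IV) is logically complete and decomposes the equivalence slightly differently from the paper's (I)$\Leftrightarrow$(III), (II)$\Leftrightarrow$(III), (III)$\Leftrightarrow$(IV), but the underlying argument is essentially identical.
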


\begin{proof}
For any $V\in\V$, there exists an $\EE$-triangle
$$V\rightarrow I\rightarrow \Sigma V\overset{}{\dashrightarrow}$$
where $I\in\mathcal I$. Applying the functor ${\rm Hom}_\B(\U,-)$ to the above $\EE$-triangle,
we have the following exact sequence:
$$0=\EE^i(\U,I)\to\EE^i(\U,\Sigma V)\to \EE^{i+1}(\U,V)\to \EE^{i+1}(\U,I)=0, i\geq 1.$$
Thus $\EE^i(\U,\Sigma V)\simeq \EE^{i+1}(\U,V)$. If $\EE^2(\U,\V)=0$, then $\EE(\U,\Sigma V)=0$, which implies $\Sigma \V\subseteq \V$. On the other hand, if  $\Sigma \V\subseteq \V$, we have $\EE^2(\U,\V)=0$. Hence (II) and (III) are equivalent.
\smallskip

By the same method, we can show that $\Omega \U\subseteq \U$ if and only if $\EE^2(\U,\V)=0$, then (I) and (III) are equivalent.
\smallskip

(III) $\Rightarrow$ (IV). When $i=2$, we have $\EE^2(\U,\Sigma \V)=0$ since $\Sigma \V\subseteq \V$. Then we get $\EE^3(\U,\V)=0$. Proceeding inductively, we deduce that
$\EE^i(\U, \V)=0$ for any $i\geq 1$.

\smallskip

(IV) $\Rightarrow$ (III).  This is trivial.
\end{proof}

In this section, let $(\U,\V)$ be a hereditary cotorsion pair in $\B$. A cotorsion pair $(\U',\V')$ is called \emph{intermediate} if $\U\subseteq \U'\subseteq {^{\bot_1}}(\Sigma \V)$. Denote $\U\cap\V$ by $\W$, we call $\W$ the \emph{co-heart} of $(\U,\V)$.

\begin{defn}\label{def2}
For convenience, we give the following notations.
\begin{itemize}
\item[(a)] Denote $\Cone(\Sigma \V,\mathcal I)$ by $\Sigma^2 \V$ and $\Cone(\Sigma^i \V,\mathcal I)$ by $\Sigma^{i+1} \V$.
\item[(b)] Denote $\Cone(\W,\W)$ by $\W_1$ and  $\Cone(\W_i,\W)$ by $\W_{i+1}$. We also denote $\W$ by $\W_0$
\end{itemize}
\end{defn}


We show the following lemma.

\begin{lem}\label{imp}
$\V\cap {^{\bot_1}}(\Sigma^{i} \V)=\W_{i}$, $i> 0$.
\end{lem}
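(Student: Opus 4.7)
My plan is to prove the equality by induction on $i\geq 1$, handling both inclusions at each stage via the long exact sequences of $\EE^{\ast}(-,Z)$ and $\EE^{\ast}(Z,-)$ attached to the defining $\EE$-triangles, combined with two standing facts. First, Lemma \ref{y2} provides the hereditary vanishing $\EE^{j}(\U,\V)=0$ for every $j\geq 1$; a short induction on $k$ applied to the defining triangle of $\Sigma^{k}\V$ together with this vanishing shows $\Sigma^{k}\V\subseteq\V$ for every $k\geq 1$. Second, since $\EE^{\geq 1}(-,I)=0$ for $I\in\mathcal I$, the long exact sequence of any triangle $Z\to I\to Z'\dashrightarrow$ with $I\in\mathcal I$ yields the dimension shift $\EE^{j+1}(X,Z)\cong \EE^{j}(X,Z')$, and such a triangle exists for every $Z$ because $\B$ has enough injectives. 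I adopt the convention $\W_{0}=\W$ and $\Sigma^{0}\V=\V$, for which the statement holds trivially since $\V\cap{^{\bot_1}}\V=\V\cap\U=\W$; this lets me run a single inductive step for all $i\geq 1$.

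For the forward inclusion $\W_{i}\subseteq \V\cap{^{\bot_1}}(\Sigma^{i}\V)$, I take $X\in\W_{i}$ realised by an $\EE$-triangle $W'\to W''\to X\dashrightarrow$ with $W'\in\W_{i-1}$ and $W''\in\W$. Membership $X\in\V$ falls out of $\EE(\U,W'')\to\EE(\U,X)\to\EE^{2}(\U,W')$, whose outer terms vanish using $W',W''\in\V$ (for $W'$ by the inductive hypothesis) and hereditariness. For ${^{\bot_1}}(\Sigma^{i}\V)$, take $Y\in\Sigma^{i}\V$ with defining triangle $Y'\to I\to Y\dashrightarrow$, $Y'\in\Sigma^{i-1}\V$; the dimension shift rewrites $\EE(X,Y)\cong\EE^{2}(X,Y')$, and the segment $\EE(W',Y')\to\EE^{2}(X,Y')\to\EE^{2}(W'',Y')$ pinches the middle to zero: the left vanishes because $W'\in{^{\bot_1}}(\Sigma^{i-1}\V)$ by induction, and the right because $W''\in\U$ and $Y'\in\Sigma^{i-1}\V\subseteq\V$ with hereditariness.

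For the reverse inclusion $\V\cap{^{\bot_1}}(\Sigma^{i}\V)\subseteq\W_{i}$, given such an $X$ I apply the cotorsion pair $(\U,\V)$ to obtain $V_{X}\to U_{X}\to X\dashrightarrow$ with $U_{X}\in\U$, $V_{X}\in\V$; extension closure of $\V$ forces $U_{X}\in\U\cap\V=\W$. It then suffices to prove $V_{X}\in\W_{i-1}$, which by the inductive hypothesis reduces to showing $V_{X}\in{^{\bot_1}}(\Sigma^{i-1}\V)$. For any $Y'\in\Sigma^{i-1}\V$, the sequence $\EE(U_{X},Y')\to\EE(V_{X},Y')\to\EE^{2}(X,Y')$ has vanishing left term by hereditariness, and its rightmost term is identified via the dimension shift with $\EE(X,Y)$ for some $Y\in\Sigma^{i}\V$, which vanishes by the hypothesis on $X$. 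The whole argument is really bookkeeping of the dimension-shifts: the only thing to watch is that each application of the shift sends a triangle terminating in $\Sigma^{j-1}\V$ to one in $\Sigma^{j}\V$, matching the target of the inductive hypothesis — there is no deeper obstacle.
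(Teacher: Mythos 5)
Your proof is correct and follows essentially the same route as the paper's: both directions are handled by applying the long exact sequences of $\EE^*(-,Z)$ and $\EE^*(Z,-)$ to the defining $\EE$-triangle of $\W_i$ (resp.\ to the cotorsion triangle for the reverse inclusion), using hereditariness and dimension shift, with the inductive hypothesis controlling the $\W_{i-1}$-term. The only difference is cosmetic: you fold the base case into a single induction starting at $i=0$ via the convention $\W_0=\W$, $\Sigma^0\V=\V$, whereas the paper treats $i=1$ separately before inducting.
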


\proof We first show the case when $i=1$. \\
For any $M\in\W_1$, there exists an $\EE$-triangle
\begin{equation}\label{t2}
W_2\rightarrow W_1\rightarrow M\overset{}{\dashrightarrow}
\end{equation}
where $W_1,W_2\in\W$. Applying the functor ${\rm Hom}_\B(\U,-)$ to the $\EE$-triangle (\ref{t2}),
we have the following exact sequence:
$$0=\EE(\U,W_1)\to\EE(\U, M)\to \EE^2(\U,W_2)=0.$$
It follows that $\EE(\U,M)=0$, then $M\in\V$ by Lemma \ref{y1}.

Applying the functor ${\rm Hom}_\B(-,\V)$ to the $\EE$-triangle (\ref{t2}),
we have the following exact sequence:
$$0=\EE(W_2,\V)\to\EE^2(M,\V)\to \EE^2(W_1,\V)=0.$$
It follows that $\EE^2(M,\V)=0$.
Note that $\EE(M,\Sigma\V)\simeq \EE^2(M,\V)=0$, thus we have $M\in{^{\bot_1}}(\Sigma \V)$.

Conversely, for any $N\in\V\cap {^{\bot_1}}(\Sigma \V)$, since $(\U,\V)$ is a cotorsion pair,
there exists an $\EE$-triangle
\begin{equation}\label{t3}
V_0\rightarrow U_0\rightarrow N\overset{}{\dashrightarrow}
\end{equation}
where $U_0\in\U$ and $V_0\in\V$.

Since $V_0, N\in\V$ and $\V$ is extension closed, we obtain $U_0\in\V$ which implies $U_0\in\U\cap\V=\W$.

Applying the functor ${\rm Hom}_\B(-,\V)$ to the $\EE$-triangle (\ref{t3}),
we have the following exact sequence:
$$0=\EE(U_0,\V)\to\EE(V_0, \V)\to \EE^2(N,\V)=0.$$
It follows that $\EE(V_0,\V)=0$ implies $V_0\in\U$ by Lemma \ref{y1}.
That is to say, $V_0\in\U\cap\V=\W$. Hence $N\in\W_1$.

Now we assume the lemma holds when $i=k-1$. Let $i=k$. For any $M\in\Cone(\W_i,\W)$, there exists an $\EE$-triangle
\begin{equation}\label{ta}
Q\rightarrow W\rightarrow M\overset{}{\dashrightarrow}
\end{equation}
where $W\in\W$ and $Q\in \W_i$. Then by hypothesis, $Q\in \V\cap {^{\bot_1}}(\Sigma^i \V)$. Applying the functor ${\rm Hom}_\B(\U,-)$ to this $\EE$-triangle (\ref{ta}),
we have the following exact sequence:
$$0=\EE(\U,W)\to\EE(\U, M)\to \EE^2(\U,Q)\simeq \EE^{i+2}(U,Q)=0.$$
It follows that $\EE(\U,M)=0$, then $M\in\V$ by Lemma \ref{y1}.

Applying the functor ${\rm Hom}_\B(-,\Sigma^k \V)$ to the $\EE$-triangle (\ref{t2}),
we have the following exact sequence:
$$0=\EE(Q,\Sigma^k \V)\to\EE^2(M,\Sigma^k \V)\to \EE^2(W_1,\Sigma^k \V)=0.$$
It follows that $\EE(M,\Sigma^{k+1} \V)\simeq \EE^2(M,\Sigma^k \V)=0$, thus we have $M\in{^{\bot_1}}(\Sigma^{k+1} \V)$.

On the other hand, for any $N\in\V\cap {^{\bot_1}}(\Sigma^{k+1} \V)$, there exists an $\EE$-triangle
\begin{equation}\label{t3}
V_0\rightarrow U_0\rightarrow N\overset{}{\dashrightarrow}
\end{equation}
where $U_0\in\W$ and $V_0\in\V$.

Applying the functor ${\rm Hom}_\B(-,\Sigma^k \V)$ to the $\EE$-triangle (\ref{t3}),
we have the following exact sequence:
$$0=\EE(U_0, \Sigma^k \V)\to\EE(V_0, \Sigma^k \V)\to \EE^2(N,\Sigma^k \V)\simeq \EE(N,\Sigma^{k+1} \V)=0.$$
It follows that $\EE(V_0,\Sigma^k\V)=0$, which implies $V_0\in\W_k$. Hence $N\in\Cone(\W_k,\W)=\W_{k+1}$.
\qed
\vspace{3mm}

By this lemma, $\W_i~ (i\geq 1)$ is closed under extensions, it is an extriangulated subcategory of $\B$.
\smallskip

We show the following theorem.

\begin{thm}\label{main1}
Let $(\U,\V)$ be a hereditary cotorsion pair in $\B$. Assume that
$$({^{\bot_1}}(\Sigma \V),\Sigma \V) \text{ and } ({^{\bot_1}}(\Sigma^i \V),\Sigma^i \V)$$
are also cotorsion pairs in $\B$. Then there exists a one-to-one correspondence from the first of the following sets to the second.
\begin{itemize}
\item[(a)] Cotorsion pairs $(\U',\V')$ such that $\U\subseteq \U'\subseteq {^{\bot_1}}(\Sigma^i \V)$.
\medskip

\item[(b)] Cotorsion pairs $(\X,\Y)$ in $\W_i$.
\end{itemize}
\end{thm}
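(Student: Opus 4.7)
My plan is to define $\Phi$ by restriction and $\Psi$ by extending a cotorsion pair from $\W_i$ to $\B$, then verify mutual inversion.

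For $\Phi$, set $\Phi(\U',\V')=(\U'\cap\W_i,\V'\cap\W_i)$. The assumption $\U\subseteq\U'$ forces $\V'\subseteq\V$ by Lemma \ref{y1}, and Lemma \ref{imp} gives $\W_i=\V\cap{^{\bot_1}}(\Sigma^i\V)$. For any $B\in\W_i$ I take the two $(\U',\V')$-approximation triangles of $B$ and check that every term lies in $\W_i$. The $\V$-membership is clear by extension closure of $\V$. For the ${^{\bot_1}}(\Sigma^i\V)$-membership, I apply $\Hom(-,\Sigma^i\V)$ to both triangles, using the hereditary identity $\EE^j(\U,\V)=0$ for $j\geq 1$, the iterated inclusion $\Sigma^{k+1}\V\subseteq\Sigma^k\V$ (from $\Sigma\V\subseteq\V$), and the resulting equality $\EE^2(B,\Sigma^i\V)=\EE(B,\Sigma^{i+1}\V)=0$ for $B\in\W_i$.

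For $\Psi$, given a cotorsion pair $(\X,\Y)$ in $\W_i$, I define
\[
\V'=\{B\in\B\mid \exists\ \EE\text{-triangle }Y\to B\to\Sigma^i V\dashrightarrow\text{ with }Y\in\Y,\ V\in\V\},
\]
(passing to its summand closure if needed), and set $\U'={^{\bot_1}}\V'$. Trivial triangles give $\Y\cup\Sigma^i\V\subseteq\V'$, yielding $\U'\subseteq{^{\bot_1}}(\Sigma^i\V)$; applying $\Hom(\U,-)$ to the defining triangle with $\EE(\U,\Y)=0$ and $\EE(\U,\Sigma^i\V)=\EE^{i+1}(\U,\V)=0$ gives $\U\subseteq\U'$. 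To produce the approximation triangles for a general $B\in\B$, I would iterate the three cotorsion pairs: apply $(\U,\V)$ to $B$ to get $V_B\to U_B\to B$; apply $({^{\bot_1}}(\Sigma^i\V),\Sigma^i\V)$ to $V_B$ to obtain $\Sigma^i V_1\to U_1\to V_B$ with the new middle term $U_1\in\V\cap{^{\bot_1}}(\Sigma^i\V)=\W_i$ by extension closure of $\V$; then apply $(\X,\Y)$ in $\W_i$ to $U_1$; and finally stack the three triangles via the octahedral-type axiom \cite[Proposition 3.15]{NP}. Lemma \ref{2} and its dual then certify $(\U',\V')$ as a cotorsion pair in $\B$.

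For $\Phi\circ\Psi=\mathrm{id}$, the inclusions $\Y\subseteq\V'\cap\W_i$ and $\X\subseteq\U'\cap\W_i$ are immediate. Conversely, if $B\in\V'\cap\W_i$ fits in $Y\to B\to\Sigma^i V\dashrightarrow$, then $B\in{^{\bot_1}}(\Sigma^i\V)$ splits this triangle so $\Sigma^i V$ becomes a summand of $B$; hence $\Sigma^i V\in\W_i\cap\Sigma^i\V$. Every such object is injective in $\W_i$ (since $\EE(M,\Sigma^i\V)=0$ for $M\in\W_i$), and is therefore a summand of some object of $\Y$ via the cotorsion pair $(\X,\Y)$ in $\W_i$, so $\Sigma^i V\in\Y$ and $B\in\Y$. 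The identity $\U'\cap\W_i=\X$ follows from $\X={^{\bot_1}}\Y\cap\W_i$. Finally $\Psi\circ\Phi=\mathrm{id}$ is a consequence of Corollary \ref{cor1}. The principal obstacle is the iterated octahedral construction of the approximation triangles in $\Psi$, where one has to use all three cotorsion pairs in turn and verify that the composite middle terms land in $\U'$ while the outer terms land in $\V'$.
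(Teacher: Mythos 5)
Your $\Phi$ is the same as the paper's (after simplification: $\U'\cap\W_i=\U'\cap\V$ since $\U'\subseteq{^{\bot_1}}(\Sigma^i\V)$, and $\V'\cap\W_i=\V'\cap{^{\bot_1}}(\Sigma^i\V)$ since $\V'\subseteq\V$), and your argument for $\Phi\circ\Psi=\mathrm{id}$ is correct. The genuine differences, and the gaps, lie in $\Psi$.

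First, you define $\U'={^{\bot_1}}\V'$, whereas the paper sets $\U'=\add(\U*\X)$. These ultimately coincide once $(\U',\V')$ is shown to be a cotorsion pair, but your choice undermines the $\Psi\circ\Phi=\mathrm{id}$ step. You claim it follows from Corollary~\ref{cor1}, which requires $\U''\subseteq\U'$ and $\V''\subseteq\V'$. You do get $\V''=\add(\Y*\Sigma^i\V)\subseteq\V'$, but with your definition this only yields $\U''={^{\bot_1}}\V''\supseteq{^{\bot_1}}\V'=\U'$, the \emph{wrong} inclusion. The paper's explicit $\U''=\add(\U*\X)$ gives $\U''\subseteq\U'$ directly (because $\U\subseteq\U'$, $\X=\V\cap\U'\subseteq\U'$ and $\U'$ is extension-closed), which is what Corollary~\ref{cor1} actually needs. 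As it stands, your argument for $\Psi\circ\Phi=\mathrm{id}$ does not close.

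Second, your sketch of the approximation triangles for $\Psi$ is much thinner than the actual difficulty. The iteration you describe starts from the \emph{left} approximation $V_B\to U_B\to B$ coming from $(\U,\V)$, yet Lemma~\ref{2} (as you invoke it) requires producing the \emph{right} approximation $B\to V^B\to U^B\dashrightarrow$ with $V^B\in\V'$, $U^B\in\U'$. The paper builds this from $B\to V\to U\dashrightarrow$ followed by approximating $V$ with $({^{\bot_1}}(\Sigma^i\V),\Sigma^i\V)$, then applying $(\X,\Y)$ inside $\W_i$, and stacking via two octahedral diagrams; this is doable but needs writing out. More seriously, the paper also constructs the left approximation $V_0'\to U_0'\to B\dashrightarrow$ directly, and that part genuinely uses the second auxiliary cotorsion pair $({^{\bot_1}}(\Sigma\V),\Sigma\V)$ together with the hereditary identity $\EE(\Omega R_2,\V)\simeq\EE(R_2,\Sigma\V)=0$ to place $\Omega R_2$ in $\U$. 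Your sketch never engages with $({^{\bot_1}}(\Sigma\V),\Sigma\V)$ at all. If you truly mean to use only Lemma~\ref{2} and thereby avoid the second auxiliary pair, you would be proving a strictly stronger theorem (dropping one hypothesis), so you should make that explicit and justify it, rather than waving at ``Lemma~\ref{2} and its dual.'' In short: the bijection and $\Phi$ are right, but the $\Psi$ side needs the paper's explicit $\add(\U*\X)$ definition (or an extra argument) for the reconstruction step, and the octahedral construction of the approximation triangles is the heart of the proof, not a footnote.
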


\begin{proof}
We first denote the correspondences:
\begin{align*}
\Phi: (\U',\V')\mapsto (\V\cap \U',\V'\cap {^{\bot_1}}(\Sigma^i \V)), \quad
\Psi: (\X,\Y)\mapsto (\add(\U*\X),\add(\Y*\Sigma^i \V)).
\end{align*}
For a cotorsion pairs $(\U',\V')$ such that $\U\subseteq \U'\subseteq {^{\bot_1}}(\Sigma^i \V)$, we show that $(\V\cap \U',\V'\cap {^{\bot_1}}(\Sigma^i \V))$ is a cotorsion pair in $\W_i$. 

$\bullet$ Since $\U\subseteq \U'\subseteq {^{\bot_1}}(\Sigma^i \V)$, we have $\Sigma^i \V \subseteq \V'\subseteq \V$. Hence by Lemma \ref{imp}, we have $\X:=\V\cap \U'\subseteq \W_i$ and $\Y:=\V'\cap {^{\bot_1}}(\Sigma^i \V)\subseteq \W_i$. Moreover, $\EE(\X,\Y)=0$.

By Lemma \ref{2}, it suffices to show that any object $G\in \W_i$ admits an $\EE$-triangle $G\rightarrow Y\rightarrow X\dashrightarrow$ where $X\in \X$ and $Y\in \Y$.

Since $({^{\bot_1}}(\Sigma^i \V),\Sigma^i \V)$ is cotorsion pair, there exists an $\EE$-triangle
$$\Sigma^i V\rightarrow R\rightarrow G\overset{}{\dashrightarrow}$$
where $R\in{^{\bot_1}}(\Sigma^i \V)$ and $\Sigma^i V\in\Sigma^i \V$. Since $(\U',\V')$ is cotorsion pair,
there exists an $\EE$-triangle
$$R\rightarrow V'\rightarrow U'\overset{}{\dashrightarrow}$$
where $U'\in\U'$ and $V'\in\V'$.
We get the following commutative diagram made of $\EE$-triangles
$$
\xymatrix{\Sigma^i V\ar[r]\ar@{=}[d]&R\ar[r]\ar[d]&G\ar@{-->}[r]\ar[d]&\\
\Sigma^i V\ar[r]&V'\ar[d]\ar[r]&Y\ar[d]\ar@{-->}[r]&\\
&U'\ar@{-->}[d]\ar@{=}[r]&U'\ar@{-->}[d]&\\
&&&}$$
Since $\V$ is extension closed and $\Sigma^i V\in\V,G\in\V$, we obtain $R\in\V$.
Applying the functor ${\rm Hom}_\B(\U,-)$ to the $\EE$-triangle $R\rightarrow V'\rightarrow U'\overset{}{\dashrightarrow}$,
we have the following exact sequence:
$$0=\EE(\U,V')\to\EE(\U,U')\to \EE^2(\U,R)=0.$$
It follows that $\EE(\U,U')=0$ implies $U'\in\V$ by Lemma \ref{y1}.
Thus $U'\in\V\cap\U'=\X$.

Since $\U'\subseteq {^{\bot_1}}(\Sigma^i \V)\subseteq {^{\bot_1}}(\Sigma^{i+1} \V)$, applying the functor ${\rm Hom}_\B(\U',-)$ to the $\EE$-triangle $V\rightarrow V'\rightarrow Y\overset{}{\dashrightarrow}$,
we have the following exact sequence:
$$0=\EE(\U',V')\to\EE(\U',Y)\to \EE^2(\U',\Sigma^i V)\simeq \EE(\U',\Sigma^{i+1} V)=0.$$
It follows that $\EE(\U',Y)=0$ implies $Y\in\V'$ by Lemma \ref{y1}.

Since $G,U'\in {^{\bot_1}}(\Sigma^i \V)$, we have $Y\in{^{\bot_1}}(\Sigma^i \V)$. Thus $Y\in\V'\cap{^{\bot_1}}(\Sigma^i \V)=\Y$.

\medskip

$\bullet$ Now let $(\X,\Y)$ be a cotorsion pair in $\W_i$. Let $\add(\U*\X)=:\U'$ and $\add(\Y*\Sigma^i \V)=:\V'$. We show that $(\U',\V')$ is a cotorsion pair such that $\U\subseteq \U'\subseteq {^{\bot_1}}(\Sigma^i \V)$.\\
Since $\U\subseteq {^{\bot_1}}(\Sigma^i \V)$ and $\X\subseteq {^{\bot_1}}(\Sigma^i \V)$. By the definition we have $\U\subseteq \U'\subseteq {^{\bot_1}}(\Sigma^i \V)$. By Lemma \ref{imp} and Lemma \ref{y2}, we have $\EE(\U,\Y)=0$, $\EE(\U,\Sigma^i \V)=0$ and $\EE(\X,\Sigma^i \V)=0$. Since $(\X,\Y)$ is a cotorsion pair in $\W_i$, by the definition we have $\EE(\X,\Y)=0$. Hence we have $\EE(\U',\V')=0$.\\
Let $B\in \B$ be any object. It admits an $\EE$-triangle $B\rightarrow V\rightarrow U\dashrightarrow$ where $U\in \U$ and $V\in \V$. Since $({^{\bot_1}}(\Sigma^i \V),\Sigma^i \V)$ is a cotorsion pair, $V$ admits an $\EE$-triangle $\Sigma^i V_0\rightarrow R\rightarrow V\dashrightarrow$ where $V_0\in \V$ and $R\in {^{\bot_1}}(\Sigma^i \V)$. Since $\V$ is closed under extension, $R\in \V$ and thus $R\in \W_i$. Then $R$ admits an $\EE$-triangle $R\rightarrow Y\rightarrow X\dashrightarrow$ where $X\in \X$ and $Y\in \Y$. We have the following commutative diagram
$$\xymatrix{
\Sigma^i V_0\ar@{=}[r] \ar[d] & \Sigma^i V_0\ar[d]\\
R \ar[r] \ar[d] &Y \ar[r] \ar[d] &X \ar@{=}[d] \ar@{-->}[r] &\\
V \ar[r] \ar@{-->}[d] &V'\ar[r] \ar@{-->}[d] &X \ar@{-->}[r] &\\
&&&
}
$$
We also have the following commutative diagram
$$\xymatrix{
\Sigma^i V_0 \ar[r] \ar@{=}[d] &Y\ar[d] \ar[r] &V'\ar@{-->}[r] \ar[d] &\\
\Sigma^i V_0 \ar[r] &I_0 \ar[r] &\Sigma^{i+1} V_0\ar@{-->}[r] &
}
$$
which induces an $\EE$-triangle $Y\rightarrow V'\oplus I_0\rightarrow \Sigma^{i+1} V_0\dashrightarrow$. Since $\Sigma^{i+1} V_0\in \Sigma^i \V$, we get $V'\in \V'$. In the following commutative diagram
$$\xymatrix{
B\ar[r] \ar@{=}[d] &V\ar[r] \ar[d] &U\ar@{-->}[r] \ar[d] &\\
B \ar[r] &V' \ar[r] \ar[d] &U' \ar[d] \ar@{-->}[r] &\\
&X \ar@{-->}[d] \ar@{=}[r] &X \ar@{-->}[d]\\
&&&
}
$$
we have $U'\in\U'$. Hence $B$ admits an $\EE$-triangle $B\rightarrow V'\rightarrow U'\dashrightarrow$ where $U'\in \U'$ and $V'\in \V'$. Since $({^{\bot_1}}(\Sigma \V),\Sigma \V)$ is also a cotorsion pair, $B$ admits the following commutative diagram
$$\xymatrix{
\Sigma^i V_1\ar@{=}[r] \ar[d] & \Sigma^i V_1\ar[d]\\
R_1 \ar[r] \ar[d] &\Sigma V_2 \ar[r] \ar[d] &R_2 \ar@{=}[d] \ar@{-->}[r] &\\
B \ar[r] \ar@{-->}[d] &B'\ar[r] \ar@{-->}[d] &R_2 \ar@{-->}[r] &\\
&&&
}
$$
where $V_1,V_2\in \V$, $R_1\in {^{\bot_1}}(\Sigma^i \V)$ and $R_2\in {^{\bot_1}}(\Sigma \V)$. Moreover, we have $\Sigma V_2\in \W_1\subseteq \W_i$, hence it admits an $\EE$-triangle $Y_0\rightarrow X_0\rightarrow \Sigma V_2 \dashrightarrow$ where $X_0\in \X$ and $Y_0\in \Y$. Then we have the following commutative diagrams
$$\xymatrix{
Y_0\ar@{=}[r] \ar[d] &Y_0\ar[d]\\
U'_0 \ar[r] \ar[d] &X_0 \ar[r] \ar[d] &R_2 \ar@{=}[d]  \ar@{-->}[r] &\\
R_1 \ar[r] \ar@{-->}[d] &\Sigma V_2 \ar[r] \ar@{-->}[d] &R_2 \ar@{-->}[r] &,\\
&&&\\
}
\quad \quad
\xymatrix{
\\
\Omega R_2 \ar[r] \ar[d] &P_2 \ar[r] \ar[d]&R_2 \ar@{=}[d]  \ar@{-->}[r] &\\
U'_0 \ar[r]  &X_0 \ar[r]  &R_2  \ar@{-->}[r] &\\
}
$$
where $P_2\in \mathcal P$. The second diagram induces an $\EE$-triangle $\Omega R_2 \rightarrow U'_0\oplus P_2\rightarrow X_0\dashrightarrow$. Since $\EE(\Omega R,\V)\simeq  \EE(R,\Sigma \V)=0$,
we get $\Omega R\in \U$, hence $U'_0\in \U'$. Now we have the following commutative diagram
$$\xymatrix{
Y_0 \ar@{=}[r] \ar[d] &Y_0\ar[d]\\
V'_0 \ar[r] \ar[d] &U'_0 \ar[r] \ar[d] &B \ar@{=}[d]  \ar@{-->}[r] &\\
\Sigma V_1\ar[r] \ar@{-->}[d] &R_1 \ar[r] \ar@{-->}[d] &B \ar@{-->}[r] &\\
&&&
}
$$
where $V'_0\in \V'$. Hence $B$ admits an $\EE$-triangle $V'_0\rightarrow U'_0\rightarrow B\dashrightarrow$ where $U'_0\in \U'$ and $V'_0\in \V'$. By definition $(\U',\V')$ is a cotorsion pair.
\medskip

$\bullet$ We show that $\Phi\Psi=\id$ and $\Psi\Phi=\id$.\\
Let $(\X,\Y)$ be a cotorsion pair in $\W_i$. Let $(\U',\V')=\Psi((\X,\Y))$ and $(\X',\Y')=\Phi((\U',\V'))$. Since $\Y'=\V'\cap {^{\bot_1}}(\Sigma^i \V)\subseteq \V'=\add (\Y*\Sigma^i\V)$ and $\X'=\V\cap \U'=\V\cap \add(\U*\X)$, we have $\EE(\X,\Y')=0$. Hence $\Y'\subseteq \Y$. Since $\X'=\V'\cap \U$, we have $\EE(\X',\Y)=0$, hence $\X'\subseteq \X$. Then by Corollary \ref{cor1}, we have $(\X,\Y)=(\X',\Y')$. Hence $\Phi\Psi=\id$.\\[1mm]
Let $(\U',\V')$ be a cotorsion pair such that $\U\subseteq \U'\subseteq {^{\bot_1}}(\Sigma^i \V)$. Consider the cotorsion pair
 $$(\U'',\V'')=(\add(\U*(\V\cap\U')), \add(\V'\cap {^{\bot_1}}(\Sigma^i \V))*\Sigma^i \V),$$ we have $\U''\subseteq \U'$ and $\V''\subseteq \V'$. Then by Corollary \ref{cor1}, we have $(\U'',\V'')=(\U',\V')$. Hence $\Psi\Phi=\id$.
\end{proof}

\begin{rem}
The assumption
$$({^{\bot_1}}(\Sigma \V),\Sigma \V) \text{ and } ({^{\bot_1}}(\Sigma^i \V),\Sigma^i \V) \text{  are cotorsion pairs}$$
is automatically satisfied when $\B$ is a triangulated category.
\end{rem}

We give an example of our main results.

\begin{exm}
Let $Q$ be the following quiver.
$$\cdot \xrightarrow{x}\cdot \xrightarrow{x}\cdot \xrightarrow{x}\cdot \xrightarrow{x}\cdot \xrightarrow{x}\cdot \xrightarrow{x}\cdot \xrightarrow{x}\cdot \xrightarrow{x}\cdot $$
Put $A=kQ/[x^4]$. The AR-quiver of $\mod A$ is the following
$$\xymatrix@C=0.1cm@R0.7cm{
&&&P_1 \ar[dr] &&P_2 \ar[dr] &&P_3 \ar[dr] &&P_4 \ar[dr] &&P_5 \ar[dr] &&P_6 \ar[dr] \\
 &&M^3_1 \ar[dr]   \ar[ur]  \ar@{.}[rr] &&M^1_2 \ar[ur]  \ar@{.}[rr]  \ar[dr]  &&M^4_3 \ar[ur]  \ar@{.}[rr] \ar[dr]   &&M^6_3 \ar[ur]  \ar@{.}[rr]  \ar[dr]  &&M^3_3 \ar[dr] \ar[ur]  \ar@{.}[rr]  &&H_6  \ar[dr]  \ar[ur]  \ar@{.}[rr]  &&M^4_1 \ar[dr]  \\
 &M^2_1 \ar@{.}[rr] \ar[dr] \ar[ur] &&M^5_2 \ar@{.}[rr] \ar[dr] \ar[ur] &&M^2_2 \ar[dr] \ar@{.}[rr] \ar[ur] &&M^5_3 \ar[dr] \ar@{.}[rr] \ar[ur] &&M^2_3 \ar[dr] \ar@{.}[rr] \ar[ur] &&H_5 \ar[dr] \ar[ur] \ar@{.}[rr] &&H_2 \ar[dr] \ar@{.}[rr] \ar[ur] &&M^5_1 \ar[dr] \\
M^1_1 \ar[ur]  \ar@{.}[rr] &&M^4_2  \ar@{.}[rr] \ar[ur] &&M^6_2 \ar@{.}[rr] \ar[ur] &&M^3_2   \ar[ur] \ar@{.}[rr] &&M^1_3 \ar@{.}[rr]  \ar[ur] &&H_4  \ar[ur] \ar@{.}[rr] &&H_3 \ar@{.}[rr]  \ar[ur] &&H_1 \ar[ur] \ar@{.}[rr] &&M^6_1
}
$$
Note that $A=\bigoplus^{3}_{i=1}M^i_1\oplus \bigoplus^{6}_{i=1}P_i$.

Pick a hereditary cotorsion pair $(\U,\V)$ of $\mod A$ where $\U=\add(A\oplus \bigoplus^{6}_{i=1}M^i_2).$ We have $\Sigma \V:$
$$\xymatrix@C=0.1cm@R0.5cm{
&&&\circ \ar[dr] &&\circ \ar[dr] &&\circ \ar[dr] &&\circ \ar[dr] &&\circ \ar[dr] &&\circ \ar[dr] \\
 &&\cdot \ar[dr]   \ar[ur]  \ar@{.}[rr] &&\cdot \ar[ur]  \ar@{.}[rr]  \ar[dr]  &&\cdot  \ar[ur]  \ar@{.}[rr] \ar[dr]   &&\cdot \ar[ur]  \ar@{.}[rr]  \ar[dr]  &&\circ  \ar[dr] \ar[ur]  \ar@{.}[rr]  &&\circ \ar[dr]  \ar[ur]  \ar@{.}[rr]  &&\circ  \ar[dr]  \\
 &\cdot \ar@{.}[rr] \ar[dr] \ar[ur] &&\cdot \ar@{.}[rr] \ar[dr] \ar[ur] &&\cdot \ar[dr] \ar@{.}[rr] \ar[ur] &&\cdot  \ar[dr] \ar@{.}[rr] \ar[ur] &&\circ  \ar[dr] \ar@{.}[rr] \ar[ur] &&\circ  \ar[dr] \ar[ur] \ar@{.}[rr] &&\circ  \ar[dr] \ar@{.}[rr] \ar[ur] &&\circ  \ar[dr] \\
\cdot \ar[ur]  \ar@{.}[rr] &&\cdot \ar@{.}[rr] \ar[ur] &&\cdot \ar@{.}[rr] \ar[ur] &&\cdot   \ar[ur] \ar@{.}[rr] &&\circ  \ar@{.}[rr]  \ar[ur] &&\circ   \ar[ur] \ar@{.}[rr] &&\circ  \ar@{.}[rr]  \ar[ur] &&\circ  \ar[ur] \ar@{.}[rr] &&\circ
}
$$
and $\Sigma^2 \V:$
$$\xymatrix@C=0.1cm@R0.5cm{
&&&\circ \ar[dr] &&\circ \ar[dr] &&\circ \ar[dr] &&\circ \ar[dr] &&\circ \ar[dr] &&\circ \ar[dr] \\
 &&\cdot \ar[dr]   \ar[ur]  \ar@{.}[rr] &&\cdot \ar[ur]  \ar@{.}[rr]  \ar[dr]  &&\cdot  \ar[ur]  \ar@{.}[rr] \ar[dr]   &&\cdot \ar[ur]  \ar@{.}[rr]  \ar[dr]  &&\cdot  \ar[dr] \ar[ur]  \ar@{.}[rr]  &&\circ \ar[dr]  \ar[ur]  \ar@{.}[rr]  &&\circ  \ar[dr]  \\
 &\cdot \ar@{.}[rr] \ar[dr] \ar[ur] &&\cdot \ar@{.}[rr] \ar[dr] \ar[ur] &&\cdot \ar[dr] \ar@{.}[rr] \ar[ur] &&\cdot  \ar[dr] \ar@{.}[rr] \ar[ur] &&\cdot  \ar[dr] \ar@{.}[rr] \ar[ur] &&\cdot  \ar[dr] \ar[ur] \ar@{.}[rr] &&\circ  \ar[dr] \ar@{.}[rr] \ar[ur] &&\circ  \ar[dr] \\
\cdot \ar[ur]  \ar@{.}[rr] &&\cdot \ar@{.}[rr] \ar[ur] &&\cdot \ar@{.}[rr] \ar[ur] &&\cdot   \ar[ur] \ar@{.}[rr] &&\cdot  \ar@{.}[rr]  \ar[ur] &&\cdot   \ar[ur] \ar@{.}[rr] &&\cdot  \ar@{.}[rr]  \ar[ur] &&\circ  \ar[ur] \ar@{.}[rr] &&\circ
}
$$
They are extension closed, hence $({^{\bot_1}}(\Sigma \V),\Sigma \V) \text{ and } ({^{\bot_1}}(\Sigma^2 \V),\Sigma^2 \V)$ are cotorsion pairs.
We have $\W=\add(\bigoplus^{3}_{i=1}M^i_2\oplus \bigoplus^{6}_{i=1}P_i)$
and indecomposable of objects in $\G-\W$ are $\{ M^i_3, i=1,2,...,6  \}$. Let $\U'=\add(A\oplus \bigoplus^{6}_{i=1}M^i_2\oplus \bigoplus^{6}_{i=4}M^i_3)$. We have $\U\subseteq \U'\subseteq {^{\bot_1}}(\Sigma \V)\subseteq  {^{\bot_1}}(\Sigma^2 \V)$. Then by Theorem \ref{main1} we can get a cotorsion pair $(\X,\Y_1)$ in $\G$ (resp. $(\X,\Y_2)$ in $\W_2$) where then indecomposable of objects in $\X-\W$ are $\{ M^4_3, M^5_3, M^6_3\}$.

\end{exm}





\section{A study on proper $m$-term subcategories}

In this section, let $\W=\U\cap \V$ where $(\U,\V)$ is a hereditary cotorsion pair. We will use the notions associated with $\W$ in Section 4. We also assume $\mathcal I\subsetneq \W$.

\begin{prop}\label{contra}
$\W_i$ is contravariantly finite in $\W_{j}$ when $i<j$.
\end{prop}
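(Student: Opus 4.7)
The plan is to construct, for each $X\in\W_j$, an explicit right $\W_i$-approximation by iteratively splicing the $\W$-resolution of $X$ with injective embeddings, and to verify the universal lifting property via the orthogonality $\EE(\W_i,\Sigma^i\V)=0$ guaranteed by Lemma~\ref{imp}. Since $X\in\W_j=\Cone(\W_{j-1},\W)$, iterating the definition produces a sequence of $\EE$-triangles $X_{k+1}\to W_k\to X_k$ for $k=0,1,\ldots,j-1$, with $X_0=X$, $W_k\in\W$, and $X_k\in\W_{j-k}$. I will then construct, by induction on $k$ from $0$ to $i$, objects $Z^{(k)}\in\W_k$ together with $\EE$-triangles $\Sigma^k X_{k+1}\to Z^{(k)}\to X$, and take $Z^{(i)}\to X$ as the desired approximation.

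The base case is $Z^{(0)}=W_0$ with the first triangle of the resolution. For the inductive step $k-1\Rightarrow k$, I will first produce the $(k{-}1)$-fold suspended $\EE$-triangle $\Sigma^{k-1}X_{k+1}\to\Sigma^{k-1}W_k\to\Sigma^{k-1}X_k$; suspending an $\EE$-triangle is standard in an extriangulated category with enough injectives, achieved via iterated application of (ET4) to injective embeddings. Next I will apply (ET4) to the inflation $\Sigma^{k-1}X_{k+1}\to\Sigma^{k-1}W_k$ together with an injective embedding $\Sigma^{k-1}W_k\to I$ with $I\in\mathcal I$: using that $I$ is injective, the resulting $\EE$-triangle is $\Sigma^{k-1}X_k\to\Sigma^k X_{k+1}\to\Sigma^k W_k$. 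Finally I will push forward the inductive triangle $\Sigma^{k-1}X_k\to Z^{(k-1)}\to X$ along the new inflation $\Sigma^{k-1}X_k\to\Sigma^k X_{k+1}$; by the $3\times 3$-lemma for extriangulated categories this will yield both the required $\EE$-triangle $\Sigma^k X_{k+1}\to Z^{(k)}\to X$ and an auxiliary $\EE$-triangle $Z^{(k-1)}\to Z^{(k)}\to\Sigma^k W_k$.

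The membership $Z^{(k)}\in\W_k$ will follow from a short induction (using $\mathcal I\subseteq\W$) showing $\Sigma^k\W\subseteq\W_k$, combined with $Z^{(k-1)}\in\W_{k-1}\subseteq\W_k$ and the extension-closedness of $\W_k$ afforded by Lemma~\ref{imp}, applied to the auxiliary triangle. At the final step $k=i$ one has $\Sigma^i X_{i+1}\in\Sigma^i\V$, since $X_{i+1}\in\W_{j-i-1}\subseteq\V$; hence for any $Y\in\W_i$ and morphism $f\colon Y\to X$, the obstruction to lifting $f$ through $Z^{(i)}\to X$ lies in $\EE(Y,\Sigma^i X_{i+1})\subseteq\EE(\W_i,\Sigma^i\V)=0$ by Lemma~\ref{imp}, so every such $f$ factors through $Z^{(i)}\to X$, which is therefore a right $\W_i$-approximation of $X$. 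The main technical hurdle will be the careful bookkeeping required to construct the iterated suspensions of the $\W$-resolution triangles via (ET4) and injective embeddings so that each suspended inflation is compatible with the pushforward used in the next inductive stage.
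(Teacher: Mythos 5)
Your proof is correct and takes a genuinely different route from the paper's. The paper reduces to the case $j=i+1$ (using transitivity of contravariant finiteness along the nested chain $\W_i\subseteq\W_{i+1}\subseteq\cdots$) and inducts on $i$: given $A\in\W_{i+1}$ with $\EE$-triangle $A_1\to W_1\to A\dashrightarrow$, it chooses a right \emph{minimal} $\W_{i-1}$-approximation $A_2\to A_1$ (available from the inductive hypothesis and the condition (WIC)), invokes the dual of Wakamatsu's Lemma~\ref{Wa} to place the cocone $C$ in $\W_{i-1}^{\bot_1}$, assembles $B\in\W_i$ as an extension of $W_1$ by $\Sigma A_2$ sitting in an $\EE$-triangle $\Sigma C\to B\to A\dashrightarrow$, and verifies the lifting property by showing the induced map $D_1\to\Sigma C$ factors through $\mathcal I$ (since $\EE(D_1,C)=0$). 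You avoid minimal approximations and Wakamatsu entirely: you unroll the iterated $\W$-resolution of $X\in\W_j$ and splice it against injective embeddings to build $Z^{(i)}\in\W_i$ together with an $\EE$-triangle $\Sigma^i X_{i+1}\to Z^{(i)}\to X\dashrightarrow$, after which the approximation property falls out of the single orthogonality $\EE(\W_i,\Sigma^i\V)=0$ furnished by Lemma~\ref{imp}. Your construction is more explicit, handles all $j>i$ uniformly rather than via transitivity, and isolates precisely which vanishing makes $Z^{(i)}\to X$ a right $\W_i$-approximation; the paper's argument instead piggybacks on the approximation built at the previous level. The facts you use implicitly --- $\mathcal I\subseteq\W$, $\Sigma^k\W\subseteq\W_k$, $\W_{k-1}\subseteq\W_k$, extension-closedness of $\W_k$ from Lemma~\ref{imp}, and the rotation/pushout manipulations of $\EE$-triangles via (ET4) and \cite[Proposition~3.15]{NP} --- are all available in this setting, so there is no gap.
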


\begin{proof}
It is enoguh to show that $\W_i$ is contravariantly finite in $\W_{i+1}$. The case $i=0$ is trivial.

Assume that we have shown $\W_{i-1}$ is contravariantly finite in $\W_i$. We show that $\W_i$ is contravariantly finite in $\W_{i+1}$.

Let $A\in \W_{i+1}$. Then we have two $\EE$-triangles
$$\xymatrix{A_1 \ar[r]^{a_1} &W_1 \ar[r]^{w_1} &A \ar@{-->}[r]  &}~~\mbox{and}~~ \xymatrix{C \ar[r] &A_2 \ar[r]^{a_2} &A_1 \ar@{-->}[r]  &}$$
where $W_1\in W$, $A_1 \in \W_i$ and $w_2$ is a right minimal $\W_{i-1}$-approximation. Then $C\in \W_{i-1}^{\bot_1}$. We can get a commutative diagram
$$\xymatrix{
A_2 \ar[r]^{i_2}  \ar[d]_{a_2} &I_2 \ar[r] \ar[d] &\Sigma A_2 \ar@{=}[d] \ar@{-->}[r]  &\\
A_1 \ar[d]^{a_1} \ar[r] &\Sigma C \ar[r] \ar[d] &\Sigma A_2 \ar@{=}[d] \ar@{-->}[r] &\\
W_1 \ar[r]_u \ar[d] &B \ar[r] \ar[d]^b &\Sigma A_2 \ar@{-->}[r]  &\\
A  \ar@{-->}[d] \ar@{=}[r] &A  \ar@{-->}[d]\\
&&
}
$$
Since $W_1, \Sigma A_2\in \W_i$, we have $B\in \W_i$. Let $D\in \W_i$. It admits an $\EE$-triangle $D_1 \xrightarrow{} W_1'\to D\dashrightarrow$ where $W_1'\in \W$ and $D_1\in \W_{i-1}$. Let $d: D\to A$ be any morphism. We have the following commutative diagram:
$$\xymatrix{
 D_1\ar[r]^{d_1} \ar[d]^c &W_1' \ar[r] \ar[d] &D \ar[d]^{d} \ar@{-->}[r]  &\\
 \Sigma C \ar[r] &B \ar[r]^b &A \ar@{-->}[r]  &
 }
 $$
Since $c$ factors through an object in $\mathcal I$, it factors through $d_1$, hence $d$ factors through $b$. Thus $\W_i$ is contravriantly finite in $\W_{i+1}$.
\end{proof}

By the same method, we can get the following corollary when $\B$ is a triangulated category.

\begin{cor}
If $\W$ is  contravariantly finite, then for any $i>0$, $\W_i$ is also contravariantly finite.
\end{cor}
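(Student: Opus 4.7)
The plan is to induct on $i$, adapting the octahedral strategy from the proof of Proposition \ref{contra} to the triangulated setting. The base case $i=0$ is just the hypothesis. For the inductive step I would assume $\W_{i-1}$ is contravariantly finite in $\B$ and show the same for $\W_i$.

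Fix $A\in\B$. Take a right $\W$-approximation $w_1\colon W_1\to A$ (using the hypothesis) and complete it to a triangle $A_1\to W_1\xrightarrow{w_1} A\dashrightarrow$. Then take a minimal right $\W_{i-1}$-approximation $a_2\colon A_2\to A_1$ of the fiber (using the inductive hypothesis) and complete it to $C\to A_2\xrightarrow{a_2} A_1\dashrightarrow$; the dual of Wakamatsu's Lemma \ref{Wa} gives $C\in \W_{i-1}^{\bot_1}$. Applying the octahedral axiom to the composition $A_2\xrightarrow{a_2}A_1\to W_1$ produces an object $B$ sitting in triangles $A_2\to W_1\to B\dashrightarrow$ and $\Sigma C\to B\xrightarrow{b} A\dashrightarrow$. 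Rotating the first triangle shows $B\in\W\ast\Sigma\W_{i-1}=\W_i$, and $b$ is the candidate approximation.

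To verify that $b$ is a right $\W_i$-approximation, take $D\in\W_i$ with triangle $D_1\to W_1'\to D\dashrightarrow$ (where $W_1'\in\W$, $D_1\in\W_{i-1}$) and a map $d\colon D\to A$. Applying $\Hom(D,-)$ to $\Sigma C\to B\xrightarrow{b}A\to\Sigma^2 C$, the existence of a lift of $d$ through $b$ is equivalent to the vanishing of $D\xrightarrow{d}A\to\Sigma^2 C$. Applying $\Hom(-,\Sigma^2 C)$ to the triangle on $D$ gives an exact sequence whose left term is $\Hom(D_1,\Sigma C)\simeq \EE(D_1,C)=0$ by rigidity ($D_1\in\W_{i-1}$, $C\in\W_{i-1}^{\bot_1}$), so $\Hom(D,\Sigma^2 C)$ injects into $\Hom(W_1',\Sigma^2 C)$. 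It therefore suffices to check that $W_1'\to D\xrightarrow{d}A\to\Sigma^2 C$ vanishes; since $w_1$ is a right $\W$-approximation and $W_1'\in\W$, the composition $W_1'\to D\to A$ factors as $W_1'\to W_1\xrightarrow{w_1}A$, reducing the problem to $W_1\to A\to\Sigma^2 C=0$. By the octahedral compatibilities, $A\to\Sigma^2 C$ equals $A\to\Sigma A_1\to\Sigma^2 C$, and $W_1\xrightarrow{w_1}A\to\Sigma A_1$ is zero as the composition of consecutive maps in the triangle $A_1\to W_1\to A\to\Sigma A_1$.

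The main obstacle is orchestrating the octahedral diagram cleanly and identifying which compatibility is actually needed; the proof then reduces to two elementary cancellations, namely the rigidity $\EE(\W_{i-1},C)=0$ and the triangle identity for $W_1\to A\to\Sigma A_1$. This parallels the step in Proposition \ref{contra} where maps were absorbed through an injective object, now replaced by the Ext-vanishing $\EE(\W_{i-1},C)=0$ available automatically in the triangulated setting.
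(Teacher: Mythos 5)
Your proposal is correct and follows essentially the same method as the paper: the paper's own "proof" is simply the remark "By the same method," pointing back to Proposition \ref{contra}, and you have correctly carried out the translation into the triangulated setting, replacing the explicit pushout diagrams with the octahedral axiom and the factoring-through-injectives step with the long exact $\Hom$-sequence and the vanishing $\Hom(D_1,\Sigma C)\simeq\EE(D_1,C)=0$.
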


\begin{lem}\label{enough2}
$\W_i$ ($i>0$) has enough projectives $\W$ and enough injectives $\Sigma^i \W$.
\end{lem}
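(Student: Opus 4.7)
The plan is to prove both claims by induction on $i$, leveraging the characterization $\W_i=\V\cap{^{\bot_1}}(\Sigma^i\V)$ from Lemma \ref{imp} together with two simple inclusions. First, $\W_{i-1}\subseteq\W_i$: the base $\W\subseteq\W_1$ uses the split $\EE$-triangle $0\to W\to W\dashrightarrow$ (with $0\in\W$), and the inductive step is immediate since any $X\in\W_{i-1}$ has a defining triangle $X_1\to W\to X\dashrightarrow$ with $X_1\in\W_{i-2}\subseteq\W_{i-1}$ (by the induction hypothesis), hence $X\in\Cone(\W_{i-1},\W)=\W_i$. Second, $\Sigma^i\W\subseteq\W_i$: the defining $\EE$-triangle $\Sigma^{i-1}W\to I\to\Sigma^iW\dashrightarrow$ has $\Sigma^{i-1}W\in\W_{i-1}$ (by induction) and $I\in\mathcal I\subseteq\W$, so $\Sigma^iW\in\Cone(\W_{i-1},\W)=\W_i$.

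With these in hand, enough projectives is immediate. For $A\in\W_i$, the defining $\EE$-triangle $A_1\to W_0\to A\dashrightarrow$ has $A_1\in\W_{i-1}\subseteq\W_i$ and $W_0\in\W$, while $W_0$ is projective in $\W_i$ because $\EE(W_0,\W_i)\subseteq\EE(\U,\V)=0$.

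For enough injectives I induct on $i$, with base $i=1$ being Lemma \ref{p1}. For the step, take $A\in\W_i$ with its defining $\EE$-triangle $A_1\to W_0\to A\dashrightarrow$ ($A_1\in\W_{i-1}$, $W_0\in\W$). The inductive hypothesis gives an $\EE$-triangle $A_1\to J\to A_1'\dashrightarrow$ with $J\in\Sigma^{i-1}\W$ and $A_1'\in\W_{i-1}$. Pushing out along $A_1\to W_0$ (via \cite[Prop.~3.15]{NP}) produces a $3\times 3$ diagram whose middle column is the $\EE$-triangle $W_0\to T\to A_1'\dashrightarrow$ and whose middle row is the $\EE$-triangle $J\to T\to A\dashrightarrow$. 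The key observation is that $T\in\W_{i-1}$: both $W_0\in\W\subseteq\W_{i-1}$ and $A_1'\in\W_{i-1}$, and $\W_{i-1}$ is extension-closed (remark after Lemma \ref{imp}). Now fix any $\EE$-triangle $T\to I_T\to\Sigma T\dashrightarrow$ with $I_T\in\mathcal I$, and apply the octahedron axiom (ET4) of \cite{NP} to the composable pair $J\to T\to A\dashrightarrow$ and $T\to I_T\to\Sigma T\dashrightarrow$. The output contains the $\EE$-triangle $J\to I_T\to C\dashrightarrow$ realizing the composition $J\to I_T$ (so $C\in\Cone(\Sigma^{i-1}\W,\mathcal I)=\Sigma^i\W$ by definition) and the desired $\EE$-triangle $A\to C\to\Sigma T\dashrightarrow$.

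The main technical step is verifying $\Sigma T\in\W_i$, so the triangle lives entirely in $\W_i$. By Lemma \ref{imp} it suffices to check $\Sigma T\in\V$ and $\EE(\Sigma T,\Sigma^i\V)=0$. For the first, applying $\Hom(\U,-)$ to $T\to I_T\to\Sigma T\dashrightarrow$ sandwiches $\EE(\U,\Sigma T)$ between $\EE(\U,I_T)=0$ (as $I_T$ is injective) and $\EE^2(\U,T)=0$ (heredity of $(\U,\V)$ plus $T\in\V$, via Lemma \ref{y2}). For the second, two successive dimension shifts do the work. Iterating the long exact sequences coming from $\Sigma^{k-1}V\to I_k\to\Sigma^kV\dashrightarrow$ for $k=1,\dots,i$, each using $\EE^j(\Sigma T,I_k)=0$ for $j\geq 1$ (injectivity of $I_k$), yields $\EE(\Sigma T,\Sigma^iV)\cong\EE^{i+1}(\Sigma T,V)$. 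The long exact sequence from $T\to I_T\to\Sigma T\dashrightarrow$ together with $\EE^j(I_T,V)=0$ for $j\geq 1$ (heredity, as $I_T\in\mathcal I\subseteq\W\subseteq\U$) then gives $\EE^{i+1}(\Sigma T,V)\cong\EE^i(T,V)\cong\EE(T,\Sigma^{i-1}V)$, and this final group vanishes precisely because $T\in\W_{i-1}={^{\bot_1}}(\Sigma^{i-1}\V)\cap\V$. Finally, $C$ is injective in $\W_i$ automatically: $\EE(\W_i,C)\subseteq\EE({^{\bot_1}}(\Sigma^i\V),\Sigma^i\V)=0$ since $C\in\Sigma^i\W\subseteq\Sigma^i\V$.
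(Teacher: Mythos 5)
Your proof is correct and follows the same overall strategy as the paper: induction on $i$, base case $i=1$ from Lemma \ref{p1}, starting from the defining $\EE$-triangle $A_1\to W_0\to A\dashrightarrow$ of $A\in\W_i$, applying the inductive hypothesis to $A_1\in\W_{i-1}$, and then gluing the triangles to produce the desired inflation $A\to C\to\Sigma T\dashrightarrow$. The gluing mechanism is different from the paper's: you first push out along $A_1\to W_0$ to replace $A_1$ by $T\in\W_{i-1}$ (getting $J\to T\to A\dashrightarrow$) and then apply (ET4) once with $T\to I_T\to\Sigma T\dashrightarrow$, whereas the paper chains three or four explicit morphisms of $\EE$-triangles starting from both $A_1\to W_1\to A\dashrightarrow$ and $A_1\to\Sigma^kW\to B\dashrightarrow$ and finishes with a final extension diagram producing $B_1\in\W_{k+1}$. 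Both routes rely on the same inputs (the inductive hypothesis, extension-closedness of $\W_{i-1}$, and $\mathcal I\subseteq\W$), so the difference is one of bookkeeping rather than substance.

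One place where your write-up does substantially more work than necessary: the "main technical step" checking $\Sigma T\in\W_i$ through the characterization $\W_i=\V\cap{^{\bot_1}}(\Sigma^i\V)$ and two rounds of dimension shifting is redundant. Since $T\in\W_{i-1}$ and $I_T\in\mathcal I\subseteq\W$, the $\EE$-triangle $T\to I_T\to\Sigma T\dashrightarrow$ already exhibits $\Sigma T\in\Cone(\W_{i-1},\W)=\W_i$ directly from the defining recursion $\W_i=\Cone(\W_{i-1},\W)$. This is the same observation the paper uses to place $\Sigma W_1$ and $\Sigma B$ in $\W_{k+1}$, and it would let you skip the entire dimension-shift computation. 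The longer argument you give is still correct, so the proof stands, but it obscures how little is actually needed at that point.
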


\begin{proof}
Since $\W_i=\V\cap{^{\bot_1}\Sigma^i \V}$, we can easily get that $\W$ is the enough projecitves in $\W_i$, and $\Sigma^i \W$ is injective in $\W_i$. We only need to show that $\Sigma^i \W$ is enough.

We have already shown that the property holds when $i=1$. Assume that we have shown that the property holds when $i=k$, we prove that it still holds when $i=k+1$.

Let $A\in \W_{k+1}$. Then we have two $\EE$-triangles
$$\xymatrix{A_1 \ar[r]^{a_1} &W_1 \ar[r]^{w_1} &A \ar@{-->}[r]  &}~~\mbox{and}~~\xymatrix{A_1 \ar[r] &\Sigma^k W \ar[r] &B \ar@{-->}[r]  &}$$
where where $W,W_1\in W$, $A_1,B \in \W_k$. We have the following commutative diagram
$$\xymatrix{
A_1 \ar[r]^{a_1} \ar@{=}[d] &W_1 \ar[d] \ar[r]^{w_1} &A \ar@{-->}[r] \ar[d]^a &\\
A_1 \ar[r] &I_1 \ar[d] \ar[r] &\Sigma A_1 \ar[d] \ar@{-->}[r] &\\
&\Sigma W_1 \ar@{-->}[d] \ar@{=}[r] &\Sigma W_1 \ar@{-->}[d]\\
&&
}
$$
where $I_1 \in \mathcal I$.  We also have the following commutative diagram
$$\xymatrix{
A_1 \ar[r] \ar@{=}[d] &\Sigma^k W \ar[d] \ar[r] &B \ar@{-->}[r]  \ar[d] &\\
A_1 \ar[r] &I_1 \ar[r]  &\Sigma A_1 \ar@{-->}[r] &
}
$$
which induces an $\EE$-triangle $\Sigma^k W \to I_1\oplus B \to \Sigma A_1\dashrightarrow$. Then we can get the following commutative diagram
$$\xymatrix{
\Sigma^k W \ar[r] \ar@{=}[d] &I_1\oplus B \ar[r] \ar[d] &\Sigma A_1 \ar[d] \ar@{-->}[r] &\\
\Sigma^k W \ar[r] &I_1\oplus I_B \ar[r] \ar[d] &\Sigma^{k+1} W \ar[d] \ar@{-->}[r] &\\
&\Sigma B \ar@{=}[r] \ar@{-->}[d] &\Sigma B \ar@{-->}[d] &\\
&&
}
$$
Now we get the following commutative diagram
$$\xymatrix{
A \ar[r] \ar@{=}[d] &\Sigma A_1 \ar[r] \ar[d] &\Sigma W_1 \ar[d] \ar@{-->}[r] &\\
A \ar[r] &\Sigma^{k+1} W \ar[r] \ar[d] &B_1 \ar[d] \ar@{-->}[r] &\\
&\Sigma B \ar@{=}[r] \ar@{-->}[d] &\Sigma B\ar@{-->}[d]\\
&&
}
$$
Since $\Sigma W_1, \Sigma B\in \W_{k+1}$, we have $B_1\in \W_{k+1}$. Hence $\Sigma^{k+1} \W$ is the enough injectives in $\W_{k+1}$.
\end{proof}

\begin{lem}
$\EE^{m}(\W_i,\W_j)=0$ when $i\leq j$ and $i<m$.
\end{lem}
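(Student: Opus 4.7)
My plan is a straightforward induction on $i$, built on two ingredients: the defining $\EE$-triangles of $\W_i = \Cone(\W_{i-1}, \W)$ and the heredity of $(\U, \V)$, which by Lemma~\ref{y2} yields $\EE^k(\U, \V) = 0$ for every $k \geq 1$.

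For the base case $i = 0$ I would record two inclusions: $\W_0 = \W \subseteq \U$ (immediate from $\W = \U \cap \V$) and $\W_j \subseteq \V$ for every $j \geq 0$ (trivial for $j = 0$, and Lemma~\ref{imp} for $j \geq 1$). These give
\[
\EE^m(\W_0, \W_j) \subseteq \EE^m(\U, \V) = 0 \qquad \text{for all } m \geq 1,
\]
which already covers every admissible pair with $i = 0$.

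For the inductive step, assume the statement at index $i-1$ and fix $A \in \W_i$, $B \in \W_j$ with $j \geq i$, and $m > i$. The defining $\EE$-triangle $A_1 \to W \to A \dashrightarrow$ with $A_1 \in \W_{i-1}$ and $W \in \W$, after applying $\Hom_\B(-, B)$, yields the exact segment
\[
\EE^{m-1}(A_1, B) \longrightarrow \EE^m(A, B) \longrightarrow \EE^m(W, B).
\]
The right-hand term vanishes by the base case (since $W \in \W_0$ and $m \geq 1$), and the left-hand term vanishes by the induction hypothesis applied at $(i-1, j)$ with exponent $m-1$, which is legitimate because $m - 1 > i - 1$ and $j \geq i - 1$. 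Hence $\EE^m(A, B) = 0$, closing the induction.

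I do not foresee any serious obstacle; conceptually the argument is just dimension-shifting along the $\W$-resolution of length $i$ implicit in the definition of $\W_i$, with the base case supplied by heredity. The only point requiring attention is the numerical bookkeeping that keeps the inequalities $m - 1 > i - 1$ and $j \geq i - 1$ valid after one step of induction, which is built into the statement of the lemma.
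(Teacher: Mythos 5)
Your proof is correct, and it is in fact cleaner and better organized than the one in the paper. You run a single induction on $i$: the base case $i=0$ falls out immediately from $\W \subseteq \U$, $\W_j \subseteq \V$ (via Lemma~\ref{imp}), and heredity ($\EE^m(\U,\V)=0$ for $m\geq 1$ by Lemma~\ref{y2}); the inductive step then applies $\Hom_\B(-,B)$ to the defining $\EE$-triangle $A_1 \to W \to A \dashrightarrow$ of $A \in \W_i$ and reads off the vanishing of $\EE^m(A,B)$ from the flanking terms $\EE^{m-1}(A_1,B)$ and $\EE^m(W,B)$, with the bookkeeping $m-1 > i-1$ exactly right. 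The paper instead performs a layered induction starting at $i=j=1$, then inducting on $j$ for fixed $i=1$, then on $i$ along the diagonal, and then on $j$ again; this is more cumbersome, and its very first step already uses $\EE^m(B,W_0)=0$ for $B \in \W_1$, $W_0 \in \W$, $m\geq 2$ --- i.e.\ the boundary case $(i,j)=(1,0)$ lying outside the stated range $i\leq j$ --- without proof. (That fact does hold: one shifts $\EE^m(B,W_0)\cong \EE(B,\Sigma^{m-1}W_0)$ using heredity and $B\in {^{\bot_1}}(\Sigma\V)$, or appeals to the paper's companion lemma for $j<i$, which is only asserted ``by the similar method.'') Your choice of $i=0$ as the base case sidesteps this issue entirely, since $\EE^m(\W,\W_j)\subseteq\EE^m(\U,\V)=0$ requires nothing beyond heredity.
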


\begin{proof}
We first show the case $i=j=1$.

Let $A,B\in \W_1$. It admits an $\EE$-triangle $W_1\to W_0\to A\dashrightarrow$ where $W_1,W_0\in\W$. Then we have the following exact sequence $0=\EE^m(B,W_0)\to \EE^m(B,A)\to \EE^{m+1}(B,W_1)=0$ when $m\geq 2$, which implies $\EE^m(\W_1,\W_1)=0$.

Now we assume that the property holds true when $i=1$ and $j=k$. Let $j=k+1$, $A\in \W_j$ and $B\in \W_1$. $A$ admits an $\EE$-triangle $A_1\to W_0\to A\dashrightarrow$ where $W_0\in\W$ and $A_1\in \W_k$. Then we have the following exact sequence $0=\EE^{m}(B,W_0)\to \EE^{m}(B,A)\to \EE^{m+1}(B,A_1)=0$, which implies $\EE^m(\W_1,\W_{k+1})=0$.

Now we assume that the property holds true when $i=k$ and $j\geq i$. Let $i=j=k+1$, $A,B\in \W_{k+1}$.  $A$ admits an $\EE$-triangle $A_1\to W_0\to A\dashrightarrow$ where $W_0\in\W$ and $A_1\in \W_k$. Then we have the following exact sequence $0= \EE^{m}(A_1,B) \to \EE^{m+1} (A,B)\to \EE^{m+1}(W_0,B)=0$ when $m>k$, hence $\EE^m(\W_i,\W_i)=0$ when $m>k+1$.

Now we assume that the property holds true when $i=k+1$ and $j=k'\geq i$. Let $j=k'+1$, $A\in \W_j$ and $B\in \W_i$. $A$ admits an $\EE$-triangle $A_1\to W_0\to A\dashrightarrow$ where $W_0\in\W$ and $A_1\in \W_{k'}$. Then we have the following exact sequence $0=\EE^{m}(B,W_0)\to \EE^{m}(B,A)\to \EE^{m+1}(B,A_1)=0$ when $m>i$, which implies $\EE^m(\W_i,\W_{k'+1})=0$.

Hence we get that $\EE^{m}(\W_i,\W_j)=0$ when $i\leq j$, $i<m$.
\end{proof}

By the similar method, we can show the following lemma.

\begin{lem}
$\EE^{m}(\W_i,\W_j)=0$ when $j< i$, $j+1<m$.
\end{lem}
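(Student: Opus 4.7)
The strategy is to imitate the inductive argument carried out in the preceding lemma, now using the projective-style $\EE$-triangles built into the definition of $\W_i$; the preceding lemma will take care of all "length-zero" vanishings that arise in the long exact sequences, while induction is performed on the resolution length $i$ of the first argument, keeping $j<i$ fixed.

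Concretely, for $A \in \W_i$ and $B \in \W_j$ with $j<i$, the decomposition $\W_i = \Cone(\W_{i-1}, \W)$ yields an $\EE$-triangle $A' \to W \to A \dashrightarrow$ with $A' \in \W_{i-1}$ and $W \in \W = \W_0$. Applying $\Hom_\B(-,B)$ extracts the long exact piece
\[ \EE^{m-1}(A', B) \to \EE^m(A, B) \to \EE^m(W, B). \]
The preceding lemma (applied at the pair $(\W_0,\W_j)$ with $0 \leq j$ and $0 < m$) forces the right-hand term to vanish, so the problem collapses to controlling $\EE^{m-1}(A', B)$ with $A' \in \W_{i-1}$.

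In the base case $i = j+1$ one has $A' \in \W_j$, and $\EE^{m-1}(A', B) = 0$ is a direct application of the preceding lemma to the pair $(\W_j, \W_j)$ in the admissible range of $m$. In the inductive step $i > j+1$, the inductive hypothesis, applied at index $i-1$, delivers $\EE^{m-1}(A', B) = 0$ after the required shift.

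The main technical point will be bookkeeping on the numeric bound of $m$ at each induction step: each syzygy $A \mapsto A'$ costs one unit of homological degree, so the range of $m$ allowed by the inductive hypothesis has to accommodate that shift simultaneously with the range provided by the preceding lemma at the "base" position of the triangle. Once these ranges are aligned, the vanishing propagates up the resolution length $i$ without further obstruction.
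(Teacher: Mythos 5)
Your argument works cleanly only when $i = j+1$; for $i > j+1$ the numbers do not close up. In the inductive step you take the $\EE$-triangle $A' \to W \to A$ with $A' \in \W_{i-1}$ and need $\EE^{m-1}(A',B)=0$. Invoking the inductive hypothesis at index $i-1$ for the pair $(\W_{i-1},\W_j)$ requires $j+1 < m-1$, that is $j+2<m$, whereas the statement only grants $j+1<m$. When $m=j+2$ (which is allowed by the hypothesis $j+1<m$) the inductive hypothesis says nothing about $\EE^{m-1}(A',B)=\EE^{j+1}(A',B)$, and this term is genuinely not controlled: for $A' \in \W_{i-1}$ with $i-1>j$, the $\EE$-triangle $A'' \to W' \to A'$ only gives $\EE^{j+1}(A',B)\cong\EE^{j}(A'',B)$ and so on down to $\EE^1(-,B)$ on some $\W_{j}$-object, which need not vanish. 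The point is that each step of the induction on the resolution length $i$ consumes one unit of homological degree, while the target range ``$j+1<m$'' is independent of $i$; so the ranges cannot stay aligned once $i$ exceeds $j+1$.

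What your induction \emph{does} prove, if you track the shift honestly, is the vanishing for $m>i$ (rather than $m>j+1$): the base case $i=j+1$ gives $m>j+1=i$, and each increment of $i$ by one requires incrementing the lower bound on $m$ by one as well. Note that $m>i$ is exactly what Corollary~\ref{higher} and Theorem~\ref{main5} consume, since they only ever invoke the lemma at $m=i+1$ (and higher degrees, for the hereditary part). Moreover, $m>i$ admits a one-line proof that avoids induction altogether: dimension shifting along injective coresolutions gives $\EE^m(A,B)\cong\EE(A,\Sigma^{m-1}B)$ for $A\in\W_i$, $B\in\W_j$; since $\W_j\subseteq\V$ and $\Sigma\V\subseteq\V$ one has $\Sigma^{m-1}B\in\Sigma^{m-1}\V\subseteq\Sigma^i\V$ whenever $m-1\geq i$, and $\EE(\W_i,\Sigma^i\V)=0$ because $\W_i=\V\cap{^{\bot_1}}(\Sigma^i\V)$ by Lemma~\ref{imp}. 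If the stronger range $j+1<m$ (with $j+1$ possibly strictly smaller than $i$) is really intended, it cannot be obtained by the syzygy-shifting strategy you or the preceding lemma use, and some additional structural input on the syzygies $A'$ would be required.
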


By the two lemmas above, we have the following corollary.

\begin{cor}\label{higher}
$\EE^{i+1}(\W_i, \W_{j-i-1})=0$ when $i<j$.
\end{cor}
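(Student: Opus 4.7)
My plan is to view the corollary as an immediate consequence of the two preceding lemmas by a simple case analysis on how the indices $i$ and $j-i-1$ compare. Set $p=i$ and $q=j-i-1$; the hypothesis $i<j$ ensures $q\geq 0$, so both $\W_p$ and $\W_q$ are among the subcategories for which the two lemmas apply. The goal $\EE^{i+1}(\W_i,\W_{j-i-1})=0$ becomes $\EE^{p+1}(\W_p,\W_q)=0$, and we pick the lemma according to whether $p\leq q$ or $p>q$.

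First, I would treat the case $p\leq q$ (equivalently $2i+1\leq j$). Here the first lemma gives $\EE^{m}(\W_p,\W_q)=0$ for every $m>p$; taking $m=p+1$ yields exactly $\EE^{p+1}(\W_p,\W_q)=0$, as desired.

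Next, I would treat the remaining case $p>q$ (equivalently $j<2i+1$). Then the second lemma applies and says $\EE^{m}(\W_p,\W_q)=0$ for every $m>q+1$. Since $p>q$ gives $p+1>q+1$, we may take $m=p+1$ and again obtain $\EE^{p+1}(\W_p,\W_q)=0$. Combining the two cases covers all $i<j$, so the corollary follows.

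There is no real obstacle; the only thing to verify carefully is the inequality matching (in particular that $m=i+1$ strictly exceeds $p$ in the first case and $q+1$ in the second, and that $q=j-i-1\geq 0$ is nonnegative so that $\W_q$ is defined via the conventions $\W_0=\W$ and $\W_{s+1}=\Cone(\W_s,\W)$ from Section 4). Once these trivialities are checked, the proof is a two-line dichotomy.
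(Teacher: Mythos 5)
Your proof is correct and is exactly the argument the paper intends when it says ``by the two lemmas above'': a dichotomy on whether $i\leq j-i-1$ or $j-i-1<i$, applying the first or second lemma with $m=i+1$ respectively, together with the observation that $i<j$ guarantees $j-i-1\geq 0$. Nothing more to add.
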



By definition, one can show the following lemma, the proof is left to the readers.

\begin{lem}\label{summand2}
$\Sigma^i\W_j$ is closed under direct summands for $i>0$ and $j\geq 0$.
\end{lem}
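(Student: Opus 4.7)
The plan is to induct on $i \geq 1$.

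For the base case $i = 1$, let $Z = X \oplus Y \in \Sigma \W_j$ be realized by an $\EE$-triangle $W \to I \to Z \dashrightarrow$ with $W \in \W_j$ and $I \in \mathcal I$. The deflation $I \to Z$ composed with the projection $Z \to X$ is a deflation $I \to X$, and the dual of (ET4) applied to this pair of composable deflations produces $\EE$-triangles $K \to I \to X \dashrightarrow$ and $W \to K \to Y \dashrightarrow$. After Krull-Schmidt minimization, which replaces $(K,I)$ by its minimal summand $(K_{\min}, I_{\min})$ with $K_{\min}$ a direct summand of $K$, it suffices to show $K_{\min} \in \W_j$. By Lemma \ref{imp}, $\W_j = \V \cap {^{\bot_1}}(\Sigma^j \V)$; extension closure of $\V$ yields $K \in \V$ and hence $K_{\min} \in \V$, while the long exact sequence for $K_{\min} \to I_{\min} \to X \dashrightarrow$ reduces the vanishing of $\EE(K_{\min}, \Sigma^j V)$ to that of $\EE^{j+2}(X, V)$. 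The latter holds because $X$ is a summand of $Z$ and the witnessing triangle gives $\EE^{j+2}(Z, V) \cong \EE^{j+1}(W, V) = 0$, using $W \in \W_j$ together with $\Sigma^m \V \subseteq \Sigma^j \V$ for $m \geq j$.

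For the inductive step $i \geq 2$, assume $\Sigma^{i-1} \W_j$ is closed under direct summands for every $j$. The same 3x3 construction applied to $Z = X \oplus Y \in \Sigma^i \W_j$ with witnessing triangle $X' \to I \to Z \dashrightarrow$ and $X' \in \Sigma^{i-1} \W_j$ produces $K \to I \to X \dashrightarrow$ and $X' \to K \to Y \dashrightarrow$; after minimization the task becomes $K_{\min} \in \Sigma^{i-1} \W_j$. Because this class is not characterized by a single Ext-vanishing condition, I would iterate the 3x3 construction down the full witnessing chain $W_0 \to I_0 \to X_1 \to \cdots \to I_{i-1} \to Z$, extracting at each level the $X$-portion of the cocone; the inductive hypothesis on summand closure of $\Sigma^k \W_j$ for $k < i$ keeps the cocones in the appropriate class, and assembling them yields a length-$i$ chain witnessing $X \in \Sigma^i \W_j$.

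The main obstacle is this level-by-level bookkeeping: at each layer one must verify that Krull-Schmidt respects the $X/Y$-split of the cocone and that the summand closure provided by the inductive hypothesis at lower levels propagates correctly. The hypothesis $\mathcal I \subsetneq \W$ and the hereditary structure of $(\U, \V)$ are what make the various Ext-vanishing statements close up consistently along the chain.
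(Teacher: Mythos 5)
Your base case $i=1$ is essentially sound: composing the deflation $I\to Z$ with the split deflation $Z\to X$ and applying (ET4)$^{\mathrm{op}}$ does give $\EE$-triangles $K\to I\to X\dashrightarrow$ and $W\to K\to Y\dashrightarrow$, and then $K\in\W_j$ follows from the characterization $\W_j=\V\cap{^{\bot_1}}(\Sigma^j\V)$ exactly as you outline (for $K\in\V$ you need to say first that $\Sigma\W_j\subseteq\Sigma\V\subseteq\V$, so $Z$ and hence its summand $Y$ lie in $\V$, and then use $W\to K\to Y\dashrightarrow$). The Krull--Schmidt minimization you insert is superfluous: once $K\in\W_j$ the triangle $K\to I\to X\dashrightarrow$ already puts $X\in\Cone(\W_j,\mathcal I)=\Sigma\W_j$.

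The inductive step, however, has a genuine gap. After the first (ET4)$^{\mathrm{op}}$ application you are left with $K\to I\to X\dashrightarrow$ and $X'\to K\to Y\dashrightarrow$, where $X'\in\Sigma^{i-1}\W_j$, and you need $K\in\Sigma^{i-1}\W_j$. But $K$ is not a direct summand of anything in sight --- it is the \emph{middle} of an $\EE$-triangle on $X'$ and $Y$ --- so the inductive hypothesis (closure of $\Sigma^{i-1}\W_j$ under summands) cannot be invoked on $K$. To conclude from $X'\to K\to Y\dashrightarrow$ you would need extension-closure of $\Sigma^{i-1}\W_j$ together with $Y\in\Sigma^{i-1}\W_j$, neither of which you have established. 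Moreover, ``iterating the 3x3 down the chain'' does not get off the ground: the next triangle in the witnessing chain is $Z_{i-2}\to I_{i-1}\to Z_{i-1}\dashrightarrow$, but the map $Z_{i-1}\to K$ you have produced is an \emph{inflation}, so $I_{i-1}\to Z_{i-1}\to K$ is a deflation followed by an inflation and (ET4)$^{\mathrm{op}}$ is not applicable to it. There is no compatible decomposition of $Z_{i-1}$ that tracks the $X$-summand of $Z$ one level down.

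The clean way to run the induction is a co-Schanuel argument, which sidesteps the 3x3 machinery entirely. Choose arbitrary $\EE$-triangles $X'\to I_X\to X\dashrightarrow$ and $Y'\to I_Y\to Y\dashrightarrow$ with $I_X,I_Y\in\mathcal I$; their direct sum is an $\EE$-triangle $X'\oplus Y'\to I_X\oplus I_Y\to Z\dashrightarrow$. Comparing with the given witness $Z'\to I_Z\to Z\dashrightarrow$ (where $Z'\in\Sigma^{i-1}\W_j$, $I_Z\in\mathcal I$) via the pullback over $Z$, injectivity of $I_X\oplus I_Y$ and $I_Z$ forces both resulting $\EE$-triangles to split, giving $X'\oplus Y'\oplus I_Z\cong Z'\oplus I_X\oplus I_Y$. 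Since $\mathcal I\subseteq\W\subseteq\W_j\subseteq\Sigma^{i-1}\W_j$ and $\Sigma^{i-1}\W_j$ is closed under finite direct sums, the right-hand side lies in $\Sigma^{i-1}\W_j$; the inductive hypothesis then gives $X'\in\Sigma^{i-1}\W_j$, hence $X\in\Sigma^i\W_j$. The base case $i=1$ also fits this mould, with $\W_j$ closed under summands by Lemma \ref{p0} (or by its Ext-characterization). Your 3x3-based base case is a valid alternative, but it does not scale to $i\ge 2$ precisely because, as you observe, $\Sigma^{i-1}\W_j$ is no longer cut out by a single Ext-vanishing condition; co-Schanuel is what replaces that characterization in the inductive step.
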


Now we show the main theorem in the section.

\begin{thm}\label{main5}
$(\W_i, \Sigma^i\W_{j-i-1})$ is a hereditary cotorsion pair in $\W_j$ when $0<i<j$.
\end{thm}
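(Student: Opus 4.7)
My plan is to reduce to Lemma \ref{2} applied inside the extriangulated category $\W_j$, whose enough projectives are $\W$ by Lemma \ref{enough2}. The three items I need are (i) the orthogonality $\EE(\W_i, \Sigma^i \W_{j-i-1}) = 0$, (ii) an $\EE$-triangle $A \to V \to U$ with $V \in \Sigma^i \W_{j-i-1}$ and $U \in \W_i$ for every $A \in \W_j$, and (iii) the higher vanishing $\EE^2_{\W_j}(\W_i, \Sigma^i \W_{j-i-1}) = 0$, which upgrades the cotorsion pair to a hereditary one via Lemma \ref{y2}. The remaining hypotheses of Lemma \ref{2} are immediate: $\W \subseteq \W_i$, $\W_i$ is extension closed (Lemma \ref{imp}), and both subcategories are closed under direct summands (Lemmas \ref{p0} and \ref{summand2}).

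Items (i) and (iii) rest on the shift isomorphism $\EE(X, \Sigma^k V) \cong \EE^{k+1}(X, V)$ for $V \in \V$, obtained from the injective coresolution of $V$ in $\B$ together with $\EE^n(-, I) = 0$ for $I \in \mathcal I$. This reduces (i) to $\EE^{i+1}(\W_i, \W_{j-i-1}) = 0$, which is Corollary \ref{higher}. For (iii), I will first observe that $\Sigma Y \in \W_j$ for any $Y \in \W_j$ (since $\EE(\Sigma Y, \Sigma^j V) \cong \EE^{j+2}(Y, V) = 0$, using that $Y \in \W_j$ forces $\EE^n(Y, V) = 0$ for all $V \in \V$ and $n \geq j+1$ by induction on $n$ via $\EE^{n+1}(Y, V) \cong \EE^n(Y, \Sigma V)$ and $\Sigma V \in \V$). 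Hence the $\EE$-triangle $Y \to I \to \Sigma Y$ lives in $\W_j$, $I$ remains injective there, and so $\EE^2_{\W_j}(X, Y) \cong \EE_{\W_j}(X, \Sigma Y) = \EE^2_{\B}(X, Y)$; therefore $\EE^2_{\W_j}(\W_i, \Sigma^i \W_{j-i-1}) \cong \EE^{i+2}(\W_i, \W_{j-i-1})$, which vanishes by the same inductive extension of the defining condition of $\W_i$.

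The approximation (ii) is constructed by induction on the gap $j - i$. Given $A \in \W_j$, I would choose an injective envelope $A \to I \to \Sigma A$ in $\B$; the calculation $\EE(\Sigma A, \Sigma^{j-1} V) \cong \EE^{j+1}(A, V) = 0$ shows $\Sigma A \in \W_{j-1}$. In the base case $j - i = 1$ we have $\Sigma A \in \W_i$ and $I \in \mathcal I \subseteq \Sigma^i \W = \Sigma^i \W_{j-i-1}$ (since the split $\EE$-triangle $0 \to I \to I$ places $I \in \Sigma \W$, iterated $i$ times), so $A \to I \to \Sigma A$ itself is the required triangle. In the step $j - i \geq 2$, the induction hypothesis gives the cotorsion pair $(\W_i, \Sigma^i \W_{j-i-2})$ in $\W_{j-1}$ and hence an $\EE$-triangle $V' \to U' \to \Sigma A$ with $V' \in \Sigma^i \W_{j-i-2}$ and $U' \in \W_i$. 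Pulling back the two deflations $I \to \Sigma A$ and $U' \to \Sigma A$ via the Nakaoka--Palu $3 \times 3$-lemma yields $\EE$-triangles $A \to N \to U'$ and $V' \to N \to I$; since $I$ is injective the latter splits, so $N \cong V' \oplus I \in \Sigma^i \W_{j-i-1}$ (using $\Sigma^i \W_{j-i-2} \subseteq \Sigma^i \W_{j-i-1}$, $\mathcal I \subseteq \Sigma^i \W \subseteq \Sigma^i \W_{j-i-1}$, and closure under direct sums). The middle column $A \to N \to U'$ is the triangle Lemma \ref{2} demands. The main obstacle I anticipate is orientation: the $3 \times 3$-lemma must be set up as a pullback of two deflations into $\Sigma A$ rather than a pushout of two inflations, so that $A$ ends up on the inflation side; once this is arranged, Lemma \ref{2} delivers the cotorsion pair and Lemma \ref{y2} makes it hereditary.
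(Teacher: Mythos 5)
Your reduction to Lemma \ref{2} in $\W_j$ is a reasonable plan, and item (i) (the orthogonality $\EE(\W_i,\Sigma^i\W_{j-i-1})=0$, reduced via shifts on the \emph{second} argument to Corollary \ref{higher}) is correct. But the core of items (ii) and (iii) rests on a dimension-shift formula that is wrong by two degrees, and this makes the approximation step and the heredity step fail.

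Concretely, for the $\EE$-triangle $A\to I\to\Sigma A$ with $I\in\mathcal I\subseteq\U$ and $V\in\V$, applying $\Hom_\B(-,V)$ and using $\EE^n(I,V)=0$ (hereditary pair, $I\in\U$, $V\in\V$) gives
$\EE^{n}(A,V)\cong\EE^{n+1}(\Sigma A,V)$ for $n\geq 1$; that is, passing from $A$ to $\Sigma A$ in the \emph{first} variable \emph{lowers} the degree by one, it does not raise it. Combined with the usual shift $\EE(\Sigma A,\Sigma^{j-1}V)\cong\EE^{j}(\Sigma A,V)$ in the second variable, one gets $\EE(\Sigma A,\Sigma^{j-1}V)\cong\EE^{j-1}(A,V)$, not $\EE^{j+1}(A,V)$. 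Since $A\in\W_j$ only guarantees $\EE^{n}(A,\V)=0$ for $n\geq j+1$, the group $\EE^{j-1}(A,V)$ need not vanish, so your claim $\Sigma A\in\W_{j-1}$ is unjustified (in fact the same computation with exponent $j+2$ shows $\Sigma A\in\W_{j+1}$, not $\W_{j-1}$ and not even $\W_j$). This breaks both the base case and the inductive step of your construction of the forward approximation $A\to V^A\to U^A$: the object $\Sigma A$ lives one level \emph{up}, not one level \emph{down}, in the $\W_\bullet$-hierarchy. The same off-by-two error appears in item (iii), where you claim $\EE(\Sigma Y,\Sigma^j V)\cong\EE^{j+2}(Y,V)$ and hence $\Sigma Y\in\W_j$; the correct identification is $\EE(\Sigma Y,\Sigma^j V)\cong\EE^{j}(Y,V)$, which again need not vanish, so your identification $\EE^2_{\W_j}\cong\EE^2_\B$ via the coresolution $Y\to I\to\Sigma Y$ is not established.

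The paper avoids this difficulty by using the \emph{dual} of Lemma \ref{2}: it constructs the backward approximation $V_A\to U_A\to A$. This is the natural direction, because the $\Cone$-definition of $\W_j$ directly hands you a deflation $W_0\to A$ with kernel in $\W_{j-1}$, which can then be pushed out against the injective coresolution of that kernel (staying \emph{inside} $\W_j$ since $\W_i$ is extension-closed). If you want to keep the forward form of Lemma \ref{2}, you would need an approximation $A\to V^A\to U^A$ built from the injective coresolution of $A$ \emph{in the subcategory} $\W_j$ (injectives $\Sigma^j\W$), not from the $\B$-injective coresolution $A\to I\to\Sigma A$, and even then the cokernel lands in $\Sigma^j\W\subseteq\Sigma^i\W_{j-i}$ rather than the required $\Sigma^i\W_{j-i-1}$; so a further reduction would be needed. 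For heredity, a cleaner route (which the paper uses implicitly via Corollary \ref{higher}) is to observe $\Omega_{\W_j}\W_i\subseteq\W_{i-1}\subseteq\W_i$ directly from the $\Cone$-presentation of an object of $\W_i$, giving $\Omega\X\subseteq\X$ without any identification of $\EE^2_{\W_j}$ with $\EE^2_\B$.
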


\begin{proof}
We first show that $(\W_i, \Sigma^i \W)$ is a cotorsion pair in $\W_{i+1}$.\\
When $i=1$, for any object $A\in \W_2$, by the proof of Proposition \ref{contra}, there is an $\EE$-triangle $\Sigma W\to B\to A\dashrightarrow$ where $W\in \W$ and $B\in \W_1$. By Proposition \ref{enough2} and the dual of Lemma \ref{2}, we get that $(\W_1, \Sigma \W)$ is a hereditary cotorsion pair in $\W_2$.\\
Assume that we have shown $(\W_i, \Sigma^i \W)$ is a hereditary cotorsion pair in $\W_{i+1}$ when $i<k+1$. Let $i=k+1$ and $A\in \W_{k+2}$. We have two $\EE$-triangles
$$\xymatrix{A_1 \ar[r]^{a_1} &W_1 \ar[r]^{w_1} &A \ar@{-->}[r]  &,} \quad \xymatrix{\Sigma^k W \ar[r] &A_2 \ar[r] &A_1 \ar@{-->}[r]  &}$$
where $W_1,W\in \W$, $A_1 \in \W_{k+1}$ and $A_2\in \W_k$. Again by the proof Proposition \ref{contra}, we can get an $\EE$-triangle $\Sigma^{k+1} W\to B\to A\dashrightarrow$ where $B\in \W_{k+1}$.\\
Assume that we have shown $(\W_i, \Sigma^i\W_{k})$ is a hereditary cotorsion pair in $\W_{i+k+1}$, we show that $(\W_i, \Sigma^i\W_{k+1})$ is a hereditary cotorsion pair in $\W_{i+k+2}$.\\
Let $A\in \W_{i+k+2}$. We have the following commutative diagram
$$\xymatrix{
\Sigma^i B\ar@{=}[r] \ar[d] &\Sigma^i B \ar[d]\\
A_2 \ar[r] \ar[d] &A_1 \ar[r] \ar[d] &A \ar@{=}[d] \ar@{-->}[r] &\\
\Sigma^{i+k+1} W_0 \ar[r] \ar@{-->}[d] &A_0 \ar@{-->}[d] \ar[r] &A \ar@{-->}[r] &\\
&&
}
$$
where $W_0,W_1\in \W$, $A_0\in \W_{i+k+1}$, $A_1\in \W_i$ and $B\in W_k$. We have the following commutative diagrams
$$\xymatrix{
&\Sigma^{i+k} W_0 \ar@{=}[r] \ar[d] &\Sigma^{i+k} W_0 \ar[d]\\
\Sigma^i B \ar@{=}[d] \ar[r] &\Sigma^i B\oplus I_0 \ar[r] \ar[d] &I_0 \ar@{-->}[r] \ar[d] &\\
\Sigma^i B  \ar[r] &A_2 \ar[r] \ar@{-->}[d] &\Sigma^{i+k+1} W_0 \ar@{-->}[d] \ar@{-->}[r] &\\
&&
}\\ \quad \xymatrix{
\Sigma^{i-1} B \ar@{=}[r] \ar[d] &\Sigma^{i-1} B \ar[d]\\
A_3 \ar[r] \ar[d] & I_1\oplus I_0 \ar[r] \ar[d] &A_2 \ar@{=}[d] \ar@{-->}[r]&\\
\Sigma^{i+k} W_0 \ar[r] \ar@{-->}[d] & \Sigma^i B\oplus I_0 \ar[r] \ar@{-->}[d] &A_2 \ar@{-->}[r]&\\
&&
}
$$
Continue this process, finally we can get the following commutative diagrams
$$\xymatrix{
&\Sigma^{k+1} W_0 \ar@{=}[r] \ar[d] &\Sigma^{k+1} W_0 \ar[d]\\
\Sigma B \ar@{=}[d] \ar[r] &\Sigma B\oplus I_0' \ar[r] \ar[d] &I_0' \ar@{-->}[r] \ar[d] &\\
\Sigma B  \ar[r] &A_{i+1} \ar[r] \ar@{-->}[d] &\Sigma^{k+2} W_0 \ar@{-->}[d] \ar@{-->}[r] &\\
&&
}\\ \quad \xymatrix{
B \ar@{=}[r] \ar[d] & B \ar[d]\\
A_{i+2} \ar[r] \ar[d] & I_1'\oplus I_0 '\ar[r] \ar[d] &A_{i+1} \ar@{=}[d] \ar@{-->}[r]&\\
\Sigma^{k+1} W_0 \ar[r] \ar@{-->}[d] & \Sigma B\oplus I_0' \ar[r] \ar@{-->}[d] &A_{i+1} \ar@{-->}[r]&\\
&&
}
$$
Since $B,\Sigma^{k+1} W_0\in \W_{k+1}$, we have $A_{i+2}\in \W_{k+1}$. Hence $A_2 \in \Sigma^i \W_{i+k+1}$. Now by Lemma \ref{summand2}, Corollary \ref{higher}, Proposition \ref{enough2} and the dual of Lemma \ref{2},  we get that $(\W_i, \Sigma^i\W_{j-i-1})$ is a hereditary cotorsion pair in $\W_j$ when $0<i<j$.
\end{proof}


\end{document}